\newcommand{\ifsodaelse}[2]{\ifthenelse{\isundefined{\SODAF}}{#2}{#1}}
\newcommand{\mnote}[1]{}
\newcommand{\e}{\varepsilon}
\newcommand{\R}{\mathbb R}
\newcommand{\1}{\mathbf 1}
\newtheorem{theorem}{Theorem}
\newtheorem{proposition}{Proposition}[section]
\newtheorem{lemma}[proposition]{Lemma}
\newtheorem{corollary}[proposition]{Corollary}
\newtheorem{claim}[proposition]{Claim}
\theoremstyle{remark}}
\newtheorem{conjecture}{Conjecture}[section]
\newtheorem{remark}[conjecture]{Remark}
\newcommand{\zigzag}{\mathbin{\raisebox{.2ex}{
      \hspace{-.4em}$\bigcirc$\hspace{-.65em}{\rm z}\hspace{.25em}}}}
\newcommand{\A}{\mathscr A}
\newcommand{\C}{\mathscr C}
\DeclareMathOperator*{\Avg}{\mathbb{ E}}
\DeclareMathOperator*{\E}{\mathbb{ E}}
\DeclareMathOperator{\sarkozy}{\A}
\newcommand{\F}{\mathbb F}
\newcommand{\N}{\mathbb N}
\newcommand{\Z}{\mathbb Z}
\newcommand{\oz}{\zigzag}
\newcommand{\circr}{{\bigcirc\!\!\!\!\mathrm{r}\,\,}}
\newcommand{\pconst}{\gamma}
\newcommand{\bpconst}{\gamma_+}
\renewenvironment{proof}[1][\proofname]{\par
  \normalfont \relax
  \trivlist
  \item[\hskip\labelsep
        \itshape 
    #1{.}]\ignorespaces
}{%
  \hfill \qed\endtrivlist
}
\providecommand{\proofname}{Proof}
\begin{document}

\ifsodaelse
     {\title{{\Large Towards a Calculus for Non-Linear Spectral Gaps \linebreak {\small{[Extended Abstract]}}}\thanks{Also available (in a different layout) at \url{http://arxiv.org/abs/XXXX.XXXXv1}. Full version will be posted at \url{http://arxiv.org/abs/XXXX.XXXX}.}}}
     {\title[Non-linear spectral gaps]{Towards a calculus for non-linear spectral gaps\\ \small [extended abstract]}\thanks{Extended abstract. To be published (in abridged form)  in the proceedings of the ACM-SIAM Symposium on Discrete Algorithms 2010 (SODA~'10).}}

\ifsodaelse
{\author{Manor Mendel\thanks{The Open University of Israel.} \\
\and
Assaf Naor\thanks{Courant Institute.}}}
{\author{Manor Mendel}
\address {Computer Science Division\\
The Open University of Israel}
\email{mendelma@gmail.com}.
\author{Assaf Naor}
\address{Courant Institute\\ New York University}
\email{naor@cims.nyu.edu}}

\date{}

\ifsodaelse{\maketitle}{}

\begin{abstract}
Given a finite regular graph $G=(V,E)$ and a metric space $(X,d_X)$, let $\gamma_+(G,X)$ denote the smallest constant $\gamma_+>0$ such that for all $f,g:V\to X$ we have:
\begin{equation*}
\frac{1}{|V|^2}\sum_{x,y\in V} d_X(f(x),g(y))^2\le \frac{\gamma_+}{|E|} \sum_{xy\in E} d_X(f(x),g(y))^2.
\end{equation*}
In the special case $X=\R$ this quantity coincides with the reciprocal of the absolute spectral gap of $G$, but for other geometries  the parameter $\gamma_+(G,X)$, which we still think of as measuring the non-linear spectral gap of $G$ with respect to $X$ (even though there is no actual spectrum present here), can behave very differently.

Non-linear spectral gaps  arise often in the theory of metric embeddings, and in the present paper we systematically study the theory of non-linear spectral gaps, partially  in order to obtain a combinatorial construction of super-expander --- a family of bounded-degree graphs $G_i=(V_i,E_i)$, with $\lim_{i\to \infty} |V_i|=\infty$, which do not admit a coarse embedding into any uniformly convex normed space. In addition, the bi-Lipschitz distortion of $G_i$ in any uniformly convex Banach space is $\Omega(\log |V_i|)$, which is the worst possible behavior due to Bourgain's embedding theorem~\cite{Bourgain-embed}. Such remarkable graph families were previously known to exist due to a tour de force algebraic construction of Lafforgue~\cite{Lafforgue}.
Our construction is different and combinatorial,
relying on the zigzag product of Reingold-Vadhan-Wigderson~\cite{RVW}.

We show that non-linear spectral gaps behave sub-multiplicatively under zigzag products --- a fact that amounts to a simple iteration of the inequality above. This yields as a special case a very simple (linear algebra free) proof of the Reingold-Vadhan-Wigderson theorem which states that zigzag products preserve the property of having an absolute spectral gap (with quantitative control on the size of the gap). The zigzag iteration of Reingold-Vadhan-Wigderson also involves taking graph powers, which is trivial to analyze in the classical ``linear" setting. In our work, the behavior of non-linear spectral gaps under graph powers becomes a major geometric obstacle, and we show that for uniformly convex normed spaces there exists a satisfactory substitute for spectral calculus which makes sense in the non-linear setting. These facts, in conjunction with a variant of Ball's notion of Markov cotype and a Fourier analytic proof of the existence of appropriate ``base graphs", are shown to imply that Reingold-Vadhan-Wigderson type constructions can be carried out in the non-linear setting.
\end{abstract}

\ifsodaelse{}{\subjclass[2010]{51F99,05C12,05C50,46B85}}

\ifsodaelse{}{\maketitle \newpage}


\section{Introduction}

Let $A=(a_{ij})$ be an $n\times n$ symmetric stochastic matrix and
let $$1=\lambda_1(A)\ge \lambda_2(A)\ge \cdots\ge \lambda_n(A)\ge
-1$$ be its eigenvalues. The reciprocal of the spectral gap of $A$,
i.e., the quantity $\frac{1}{1-\lambda_2(A)}$, is the smallest constant $\gamma>0$
such that for every $x_1,\ldots,x_n\in \R$ we have
\begin{equation}\label{eq:R-poin}
\frac{1}{n^2}\sum_{i=1}^n\sum_{j=1}^n(x_i-x_j)^2\le
\frac{\gamma}{n}\sum_{i=1}^n\sum_{j=1}^n a_{ij} (x_i-x_j)^2.
\end{equation}
By summing over the coordinates with respect to some orthonormal
basis,~\eqref{eq:R-poin} can be restated as follows: the value
$\frac{1}{1-\lambda_2(A)}$ is the smallest constant $\gamma>0$ such
that for all $x_1,\ldots,x_n\in L_2$ we have
\begin{equation}\label{eq:L_2-poin} \frac{1}{n^2}\sum_{i=1}^n\sum_{j=1}^n\|x_i-x_j\|_2^2\le
\frac{\gamma}{n}\sum_{i=1}^n\sum_{j=1}^n a_{ij} \|x_i-x_j\|_2^2.
\end{equation}

Once one realizes the validity of inequality~\eqref{eq:L_2-poin} it
is natural to generalize it in several ways. For example, we can
replace the exponent $2$ by some other exponent $p>0$ and, more
crucially, we can replace the Euclidean geometry by some other
metric space $(X,d)$. Such generalizations are standard practice in
metric geometry, as we shall discuss below. For the sake of
presentation, we can take this generalization to even greater
extremes: let $X$ be an arbitrary set and let $K: X\times X\to
[0,\infty)$ be a symmetric function. Such functions are often called
{\em kernels} in the literature, and we shall adopt this terminology
here. Define the reciprocal spectral gap of $A$ with respect to
$K$, denoted $\gamma(A,K)$, as the infimum over all $\gamma\ge
0$ such that for all $x_1,\ldots,x_n\in X$ we have
\begin{equation}\label{eq:kernel-poin} \frac{1}{n^2}\sum_{i=1}^n\sum_{j=1}^nK(x_i,x_j)\le
\frac{\gamma}{n}\sum_{i=1}^n\sum_{j=1}^n a_{ij}K(x_i,x_j).
\end{equation}

In what follows we will also call $\gamma(A,K)$ the Poincar\'e
constant of $A$ with respect to $K$. Readers are encouraged to
focus on the case when $K$ is a power of some metric on $X$, though
as will become clear presently, a surprising amount of ground can be
covered without any assumption on the kernel $K$. For concreteness
we restate the above discussion: the standard gap in the linear
spectrum of $A$ corresponds to considering Poincar\'e constants with
respect to Euclidean spaces (i.e., kernels which are squares of
Euclidean metrics), but there is scope for a theory of non-linear
spectral gaps when one considers inequalities such
as~\eqref{eq:kernel-poin} with respect to other geometries.
The
purpose of this paper is to make some steps towards such a theory,
with emphasis on possible extensions of spectral calculus to non-linear
(non-Euclidean) settings. We apply our new calculus for non-linear
spectral gaps to construct new strong types of expanders, and to resolve a question of Lafforgue~\cite{Lafforgue}. We
obtain a new combinatorial construction of a remarkable type of
bounded degree graphs whose shortest path metric is incompatible with the geometry of any uniformly convex normed space in a very strong sense (i.e., coarse non-embeddability).
The existence of such graph families was first discovered by
Lafforgue~\cite{Lafforgue} via an algebraic construction. Our work indicates that there
is hope for a useful theory of non-linear spectral gaps, beyond the
sporadic examples that have been previously studied in the
literature.

\subsection{Coarse non-embeddability}\label{sec:coarse}
A sequence of metric spaces $\{(X_n,d_n)\}_{n=1}^\infty$ is said to embed coarsely
(with uniform moduli) into a metric space $(Y,d_Y)$ if there exist two non-decreasing functions $\alpha,\beta:[0,\infty)\to [0,\infty)$ such that $\lim_{t\to\infty}\alpha(t)=\infty$, and mappings $f_n:X_n\to Y$, such that for all $n\in\N$ and $x,y\in X_n$ we have:
\begin{equation}\label{eq:coarse condition}
\alpha\left(d_{X_n}(x,y)\right)\le d_Y(f_n(x),f_n(y))\le \beta\left(d_{X_n}(x,y)\right).
\end{equation}
Equation~\eqref{eq:coarse condition} should be viewed as a weak form of ``metric faithfulness" of the mappings $f_n$: this seemingly humble  requirement can be restated informally as ``large distances should map uniformly to large distances". Nevertheless, this weak notion of embedding (which is much weaker than, say, bi-Lipschitz embeddability), has a wide range of applications in geometry and group theory: see for example the book~\cite{Roe03}
and the references therein for (a small part of) such applications.

Since coarse embeddability is a weak requirement, it is quite difficult to prove coarse non-embeddability: very few methods to establish such a result are known, among which is the use of non-linear spectral gaps, as pioneered by Gromov~\cite{Gromov-random-group} (other such methods are coarse notions of metric dimension~\cite{Gro93}, or the use of metric cotype~\cite{MN-cotype}. These methods do not seem to be applicable to the question that we study here). Gromov's argument is simple: assume that $X_n=(V_n,E_n)$ are regular graphs of bounded degree and that $d_{n}(\cdot,\cdot)$ is the shortest-path metric on $X_n$. Assume also that for some $p,\gamma\in (0,\infty)$ we have for all $n\in \N$ and $f:V_n\to Y$:
\begin{equation}
\label{eq:graph poincare}
\frac{1}{|V_n|^2}\sum_{u,v\in V_n}d_Y(f(u),f(v))^p
\ifsodaelse{\\}{}
\le \frac{\gamma}{|E_n|}\sum_{{x,y}\in E_n} d_Y(f(x),f(y))^p.
\end{equation}

A combination of~\eqref{eq:coarse condition} and~\eqref{eq:graph poincare} yields the bound $\frac{1}{|V_n|^2}\sum_{u,v\in V_n}\alpha\left(d_n(u,v)\right)^p\le \gamma\beta(1)^p$. But, since $X_n$ is a bounded degree graph, at least half of the pairs of vertices $u,v\in V_n$ satisfy $d_n(u,v)\ge c\log |V_n|$, where $c$ is a constant which depends on the implied degree bound (but not on $n$). Thus $\alpha(c\log |V_n|)^p\le 2\gamma\beta(1)^p$, and in particular if $\lim_{n\to \infty}|V_n|=\infty$ then we get a contradiction to the assumption $\lim_{t\to\infty}\alpha(t)=\infty$. Observe in passing that this argument also shows that $X_n$ has bi-Lipschitz distortion $\Omega(\log |V_n|)$ in $Y$ --- such an argument was first used by Linial, London and Rabinovich~\cite{LLR} (see also~\cite{Mat97}) to show that Bourgain's embedding theorem~\cite{Bourgain-embed} is asymptotically sharp.

Assumption~\eqref{eq:graph poincare} can be restated as saying that $\gamma(A_n,d_Y^p)\le \gamma$, where $A_n$ is the normalized adjacency matrix of $X_n$. This condition can be viewed to mean that the graphs $\{X_n\}_{n=1}^\infty$ are ``expanders" with respect to $(Y,d_Y)$.
Note that if $Y$ contains at least two points then~\eqref{eq:graph poincare} implies that $\{X_n\}_{n=1}^\infty$ are necessarily also expanders in the classical sense.

The key point in the coarse non-embeddability question is therefore to construct such $\{X_n\}_{n=1}^\infty$ for which we can prove the inequality~\eqref{eq:graph poincare} for non-Hilbertian targets $Y$. This question has been previously investigated by several authors.  Matou\v{s}ek~\cite{Mat97} devised an extrapolation method for Poincar\'e inequalities
\ifsodaelse{}{(see also the description of Matou\v{s}ek's argument  in~\cite{BLMN05})}
which establishes the validity of~\eqref{eq:graph poincare} for every expander when $Y=L_p$. The work of Ozawa~\cite{Ozawa} and Pisier~\cite{pisier-79,pisier-2008} proves~\eqref{eq:graph poincare} for every expander when $Y$ is Banach space which satisfies certain geometric conditions (e.g., $Y$ can be taken to be a Banach lattice with finite cotype). In~\cite{NS-2004,NR-2005} additional results of this type are obtained.

A normed space is called {\em super-reflexive} if it admits an equivalent norm which is uniformly convex. Recall that a normed space $(X,\|\cdot\|_X)$ is called uniformly convex if for every $\e\in (0,1)$  there exists $\delta=\delta_X(\e)>0$ such that for any two vectors $x,y\in X$ with $\|x\|_X=\|y\|_X=1$ and $\|x-y\|_X\ge \e$ we have $\left\|\frac{x+y}{2}\right\|_X\le 1-\delta$ (thus uniform convexity is a uniform version of {\em strict convexity}). The question whether there exists a sequence of arbitrarily large  graphs of bounded degree which do not admit a coarse embedding into any super-reflexive normed space was posed by Kasparov and Yu in~\cite{KY06}, and was solved in the remarkable work of Lafforgue~\cite{Lafforgue} on the strengthened version of property $(T)$ for $SL_3(\mathbb F)$ when $\mathbb F$ is a non-Archimedian local field (thus, for concreteness, Lafforgue's graphs can be obtained as Cayley graphs of finite quotients of $SL_3(\Z_p)$, where $p$ is a prime and $\Z_p$ is the $p$-adic integers).

In this paper we obtain an alternative solution of the Kasparov-Yu problem using a combinatorial construction based on the ``zigzag expanders" of
Reingold, Vadhan, and Wigderson~\cite{RVW}.
More specifically, we construct a family of 9-regular graphs which satisfies~\eqref{eq:graph poincare} for every super-reflexive Banach space $X$ (where $\gamma$ depends only on $X$) --- such graphs are called \emph{super-expanders}.

We state at the outset that it is a major open question whether every expander satisfies~\eqref{eq:graph poincare} for every uniformly convex normed space $X$.
%
It is also unknown whether there exist graph families of bounded degree and logarithmic girth which do not admit a coarse embedding into any super-reflexive normed space---this question is of particular interest in the context of the potential application to the Novikov conjecture that was suggested by Kasparov and Yu in~\cite{KY06}. Note that some geometric restriction on the target space $X$ must be imposed, since the relation between non-linear spectral gaps and coarse non-embeddability, in conjunction with the fact that every finite metric space embeds isometrically into $\ell_\infty$, shows that (for example) $X=\ell_\infty$ can never satisfy~\eqref{eq:graph poincare} for bounded  degree
family of graphs.

Our combinatorial approach can be used to show that there exist bounded degree graph sequences which do not admit a coarse embedding into any $K$-convex normed space. A normed space $X$ is $K$-convex\footnote{$K$-convexity is also equivalent to $X$ having type strictly bigger than $1$, see~\ifsodaelse{\cite{MS,Mau03}}{\cite{MS,Mau03}}. The
$K$-convexity property is strictly weaker than super-reflexivity,
see~\ifsodaelse{\cite{Jam78}}{\cite{Jam74,JL75,Jam78,PX87}}.} if there exists $\e_0>0$ and $n_0\in \N$ such that any embedding of $\ell_1^{n_0}$ into $X$ incurs distortion at least $1+\e_0$, see~\cite{Pisier-K-convex}. 
 This question was asked by Lafforgue~\cite{Lafforgue}. Recently, independently of our work,
Lafforgue~\cite{lafforgue-2009} managed to modify his argument to obtain coarse non-embeddability into $K$-convex spaces for his graph sequences as well.

\subsection*{Acknowledgments}

Michael Langberg was involved in early discussions on the analysis of the zigzag product.
Keith Ball helped in simplifying this analysis. M. M. was partially supported by ISF grant no. 221/07,
BSF grant no. 2006009, and
a gift from Cisco research center. A. N. was supported in part by NSF grants CCF-0635078 and CCF-0832795, BSF grant 2006009, and the Packard Foundation.




\section{A combinatorial approach to the existence of Super-Expanders}

\subsection{The compatibility of non-linear spectral gaps with
combinatorial constructions}
The parameter $\gamma(A,K)$ will reappear presently, but for the purpose of this section we need to study a variant of it which corresponds to the absolute spectral gap of a matrix (similar to the role of absolute spectral gaps in the work of Reingold-Vadhan-Wigderson~\cite{RVW}). Define $\lambda(A)=\max_{2\le i\le n} |\lambda_i(A)|$ and call the quantity $1-\lambda(A)$ the absolute spectral gap of $A$. Similarly to~\eqref{eq:L_2-poin}, the reciprocal of the absolute spectral gap of $A$ is the smallest constant $\gamma_+>0$ such
that for all $x_1,\ldots,x_n,y_1,\ldots,y_n\in L_2$ we have
\begin{equation}\label{eq:L_2-poin+} \frac{1}{n^2}\sum_{i=1}^n\sum_{j=1}^n\|x_i-y_j\|_2^2\le
\frac{\gamma_+}{n}\sum_{i=1}^n\sum_{j=1}^n a_{ij} \|x_i-y_j\|_2^2.
\end{equation}
Analogously to~\eqref{eq:kernel-poin}, given a kernel $K:X\times X\to [0,\infty)$ we can then define $\gamma_+(A,K)$ to be the the infimum over all $\gamma_+\ge
0$ such that for all $x_1,\ldots,x_n,y_1,\ldots,y_n\in X$ we have
\begin{equation}\label{eq:kernel-poin+} \frac{1}{n^2}\sum_{i=1}^n\sum_{j=1}^nK(x_i,y_j)\le
\frac{\gamma_+}{n}\sum_{i=1}^n\sum_{j=1}^n a_{ij}K(x_i,y_j).
\end{equation}
Note that clearly $\gamma_+(A,K)\ge \gamma(A,K)$.

In what follows we will often deal with regular graphs, which will always be allowed to have self loops and multiple edges. We will use the convention that each self loop contribute $1$ to the degree of a vertex. The normalized adjacency matrix of a $d$-regular graph $G=(V,E)$, denoted $A_G$, is defined as usual by letting its $u,v$ entry be the number of edges joining $u,v\in V$ divided by $d$. When discussing Poincar\'e constants we will interchangeably identify $G$ with $A_G$. Thus, for example, we write $\gamma_+(G,K)=\gamma_+(A_G,K)$.

The starting point of our work is an investigation of the behavior of the quantity $\gamma_+(G,K)$ under certain graph operations, the most important of which (for our purposes) is the zigzag product of Reingold-Vadhan-Wigderson~\cite{RVW}. As we shall see below, combinatorial constructions seem to be well-adapted to controlling non-linear quantities such as $\gamma_+(G,K)$. This crucial fact allows us to use them in a perhaps unexpected geometric context.


Assume now that $G_1=(V_1,E_1)$ is an $n_1$-vertex graph which is $d_1$-regular and that $G_2=(V_2,E_2)$ is a $d_1$-vertex graph which is $d_2$-regular. Since the number of vertices in $G_2$ is the same as the degree in $G_1$, we can identify $V_2$ with the edges emanating from a given vertex $u\in V_1$. Formally, we fix for every $u\in V_1$ a bijection $\pi_u:\{e\in E_1: \; u \in e\} \to V_2$.
Moreover, we fix for every $a\in V_2$ a bijection between $[d_2]=\{1,\ldots,d_2\}$ and the multiset of the vertices adjacent to $a$ in $G_2$,
$\kappa_a:[d_2]\to \{b\in V_2:\; \{a,b\}\in E_2\}$.

The zigzag product $G_1\oz G_2$ is the graph whose vertices are $V_1\times V_2$ and $(u,a),(v,b)\in V_1\times V_2$ are joined by an edge if and only if there exist $i,j\in [d_2]$ such that:
\[ \{u,v\}\in E_1 \ifsodaelse{,\ }{\quad\text{and}\quad} a=\kappa_{\pi_u(\{u,v\})}(i)
\ifsodaelse{,\ }{\quad\text{and}\quad} b=\kappa_{\pi_v(\{u,v\})}(j) .\]
The schematic description of this construction is as follows: think of the vertex set of $G_1\oz G_2$ as a disjoint union of ``clouds" which are copies of $V_2=\{1,\ldots,d_1\}$ indexed by $V_1$. Thus $(u,a)$ is the point indexed by $a$ in the cloud labeled by $u$. Every edge $\{(u,a),(v,b)\}$ of $G_1\oz G_2$ is the result of a three step walk: a ``zig" step in $G_2$ from $a$ to $\pi_u(\{u,v\})$  in $u$'s cloud, a ``zag" step in $G_1$ from $u$'s cloud to $v$'s cloud along the edge $\{u,v\}$ and a final ``zig" step in $G_2$ from $\pi_v(\{u,v\})$ to $b$ in $v$'s cloud.  The  zigzag product is illustrated in Figure~\ref{fig:zigzag-op}. The number of vertices of $G_1\oz G_2$ is $n_1d_1$ and its degree is $d_2^2$.

\begin{figure}[ht]
\begin{center}
\includegraphics{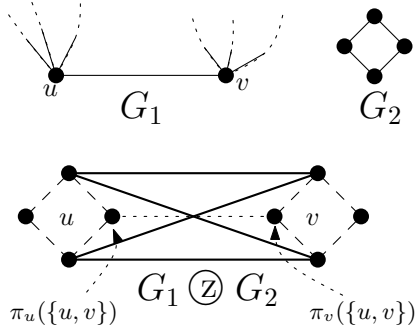}%
\caption{
Illustration of the zigzag product. The upper part of the figure depicts \emph{part} of a 4-regular graph  $G_1$,
and a 4-vertex cycle $G_2$. The bottom part of the figure depicts the edges of  the zigzag product 
between $u$'s cloud and $v$'s cloud.
The original edges of $G_1$ and $G_2$ are drawn as dotted and dashed lines (respectively).}
\label{fig:zigzag-op}
\end{center}
\end{figure}

The zigzag product depends on the labels $\{\pi_u\}_{u\in V_1}$,%
 and in fact different labels of the same graphs can produce non-isomorphic products\footnote{The labels $\{\kappa_a\}_{a\in V_2}$ do not affect the structure of the zigzag product but they are useful in the subsequent analysis.}. However,
all of our results below will be independent of the actual choice of the labeling, so while our notation should formally depend on the labeling, we will drop its explicit mention for the sake of simplicity.

Let us now examine how  $\gamma_+(G_1\oz G_2,K)$ is related to $\gamma_+(G_1,K)$, and $\gamma_+(G_2,K)$, where $K:X\times X\to [0,\infty)$ is an arbitrary kernel. To this end take $f,g:V_1\times V_2\to X$ and note that the definition of $\gamma_+(G_1,K)$ implies that for all $a,b\in V_2$ we have:
\begin{equation}\label{zigzag1-b}
\frac{1}{n_1^2}\sum_{u,v\in V_1}K\left(f(u,a),g(v,b)\right)
\ifsodaelse{\\}{} \le \frac{\gamma_+(G_1,K)}{n_1d_1}\sum_{\{u,v\}\in E_1} K\left(f(u,a),g\left(v,b\right)\right).
\end{equation}
Thus:
\begin{multline}\label{zigzag2-b}
\frac{1}{|V_1\times V_2|^2}\sum_{(u,a),(v,b)\in V_1\times V_2}K(f(u,a),g(v,b))
\ifsodaelse{\\}{} =
\frac{1}{d_1^2}\sum_{a,b\in V_2}\frac{1}{n_1^2}\sum_{u,v\in V_2}K(f(u,a),g(v,b))\\
\stackrel{\eqref{zigzag1-b}}{\le}
\frac{\gamma_+(G_1,K)}{n_1d_1^3}\sum_{a,b\in V_2}\sum_{\{u,v\}\in E_1}K\left(f(u,a),g\left(v,b\right)\right).
\end{multline}
Next, the definition of $\gamma_+(G_2,K)$ implies that for all $u\in V_1$ and $b\in V_2$ we have
\begin{multline}\label{zigzag3-b}
\frac{1}{d_1^2}\sum_{a\in V_2} \sum_{\substack{v\in V_1:\\ \{u,v\}\in E_1}} K\left(f(u,a),g\left(v,b\right)\right)
\\ \le
\frac{\gamma_+(G_2,K)}{d_1d_2}
\sum_{\substack{v\in V_1:\\ \{u,v\}\in E_1}}\sum_{i\in [d_2] } K\left(f\left(u,\kappa_{\pi_u(\{u,v\})}(i)\right),g\left(v,b\right)\right).
\end{multline}

Summing~\eqref{zigzag3-b} over $u\in V_1$ and $b\in V_2$ and plugging it into~\eqref{zigzag2-b}, yields the bound:
\begin{multline}\label{zigzag4-b}
\frac{1}{|V_1\times V_2|^2}\sum_{(u,a),(v,b)\in V_1\times V_2}K(f(u,a),g(v,b))\\\le \frac{\gamma_+(G_1,K)\gamma_+(G_2,K)}{n_1d_1^2d_2}\sum_{v\in V_1}\sum_{i\in[d_2]}\sum_{\substack{u\in V_1:\\ \{u,v\}\in E_1}} \sum_{b\in V_2}
\ifsodaelse{\\}{}
K\left(f\left(u,\kappa_{\pi_u(\{u,v\})}(i)\right),g\left(v,b\right)\right).
\end{multline}
Another application of the definition of $\gamma_+(G_2,K)$ implies that for all $v\in V_1$ and $i\in [d_2] $ we have:
\begin{multline}\label{zigzag5-b}
\frac{1}{d_1^2}\sum_{\substack{u\in V_1:\\ \{u,v\}\in E_1}} \sum_{b\in V_2}
K\left(f\left(u,\kappa_{\pi_u(\{u,v\})}(i)\right),g\left(v,b\right)\right)
\\
\ifsodaelse{\shoveleft{\le}}{\le}
 \frac{\gamma_+(G_2,K)}{d_1d_2}
\sum_{\substack{u\in V_1:\\ \{u,v\}\in E_1}} \sum_{j\in [d_2]}
\ifsodaelse{\\}{}
K\left(f\left(u,\kappa_{\pi_u(\{u,v\})}(i)\right),g\left(v,\kappa_{\pi_v(\{u,v\})}(j)\right)\right).
\end{multline}
Summing~\eqref{zigzag5-b} over $v\in V_1$ and $i\in [d_2] $ and combining the resulting inequality with~\eqref{zigzag4-b} yields the bound:
\begin{eqnarray*} 
&&\!\!\!\!\!\!\!\!\!\!\!\!\!\!\!\!}{\!\!\!\!\!\!\!\!\!\frac{1}{|V_1\times V_2|^2}\sum_{(u,a),(v,b)\in V_1\times V_2}K(f(u,a),g(v,b))\\
&\ifsodaelse{\shoveleft{\le}}{\le}&
\ifsodaelse{\tfrac}{\frac}{\gamma_+(G_1,K)\gamma_+(G_2,K)^2}{n_1d_1d_2^2}
\ifsodaelse{\!\!\!\!}{}\sum_{\{u,v\}\in E_1} \sum_{i,j\in [d_2]}
\ifsodaelse{ \\ \shoveright{
              K\left(f\left(u,\kappa_{\pi_u(\{u,v\})}(i)\right),
              g\left(v,\kappa_{\pi_v(\{u,v\})}(j)\right)\right)
           }}
           {
              K\left(f\left(u,\kappa_{\pi_u(\{u,v\})}(i)\right),g
              \left(v,\kappa_{\pi_v(\{u,v\})}(j)\right)\right)
           }
\\ &\ifsodaelse{\shoveleft{=}}{=}&
\ifsodaelse{\tfrac}{\frac}{\gamma_+(G_1,K)\gamma_+(G_2,K)^2}{n_1d_1d_2^2}
\ifsodaelse{\!\!\!\!\!\!\!\!\!\!\!\!\!\!\!\!}{\!\!\!\!\!\!\!\!\!}
\sum_{\{(u,a),(v,b)\}\in E(G_1\oz G_2)}
\ifsodaelse{\!\!\!\!\!\!\!\!\!\!\!\!\!\!\!\!\!\!}{\!\!\!\!\!\!\!}
K\left(f\left(u,a\right),g\left(v,b\right)\right).
\end{eqnarray*}


Hence we deduce the following theorem: 

\begin{theorem}[Sub-multiplicativity]\label{thm:sub} Let $G_1=(V_1,E_1)$ be an $n_1$-vertex graph which is $d_1$-regular and let $G_2=(V_2,E_2)$ be a $d_1$-vertex graph which is $d_2$-regular. Then for every every kernel $K:X\times X\to [0,\infty)$,
$$
\gamma_+\left(G_1\oz G_2,K\right)\le \gamma_+(G_1,K)\cdot \gamma_+(G_2,K)^2.
$$
\end{theorem}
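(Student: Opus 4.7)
The plan is to bound the left-hand side of the desired Poincar\'e-type inequality for $G_1\oz G_2$ by applying the one-graph Poincar\'e inequality three times in succession: once with respect to $G_1$ (in the $V_1$-coordinate), and twice with respect to $G_2$ (in the two $V_2$-coordinates, one for each ``zig'' step of the zigzag walk). The key conceptual point is that the vertex set of $G_1\oz G_2$ is $V_1\times V_2$, so any pair of functions $f,g:V_1\times V_2\to X$ can be ``sliced'' along either coordinate: freezing one coordinate produces a function of the other, and the single-graph definition of $\gamma_+$ can then be invoked on that slice. Summing the resulting one-slice inequalities over the other coordinate is automatic, since the definition of $\gamma_+(\cdot,K)$ is itself an averaged inequality.

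Concretely, I would first fix $a,b\in V_2$ and apply the definition of $\gamma_+(G_1,K)$ to the functions $u\mapsto f(u,a)$ and $v\mapsto g(v,b)$; averaging the result over $(a,b)\in V_2\times V_2$ converts the full $(V_1\times V_2)^2$-average on the left into a sum over edges $\{u,v\}\in E_1$, paying a multiplicative factor $\gamma_+(G_1,K)$. Next I would fix $u\in V_1$ and $b\in V_2$ and use the bijection $\pi_u$ to reindex the sum over $\{u,v\}\in E_1$ (i.e.\ over neighbors of $u$) as a sum over $V_2$. This identifies the double sum over $a\in V_2$ and over neighbors of $u$ with an ``all-pairs'' average on $V_2\times V_2$, so the definition of $\gamma_+(G_2,K)$ (applied to the appropriate slice functions on $V_2$) replaces it by a sum over edges of $G_2$, i.e.\ by a sum over $i\in[d_2]$ with the $f$-argument replaced by $\kappa_{\pi_u(\{u,v\})}(i)$. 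Summing over $u\in V_1$ and $b\in V_2$ gives the factor $\gamma_+(G_2,K)$.

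A completely symmetric third application of $\gamma_+(G_2,K)$, this time using the bijection $\pi_v$ to reindex neighbors of $v$ in $G_1$ as elements of $V_2$ and applied to the $g$-slice (over $b\in V_2$), replaces $b$ by $\kappa_{\pi_v(\{u,v\})}(j)$ with $j\in[d_2]$ and contributes the second factor of $\gamma_+(G_2,K)$. At this point the surviving sum runs over $\{u,v\}\in E_1$ and $i,j\in[d_2]$, and by the very definition of the zigzag product the triples $(\{u,v\},i,j)$ are exactly in one-to-one correspondence with the edges of $G_1\oz G_2$; recognizing this yields precisely the claimed bound $\gamma_+(G_1,K)\cdot\gamma_+(G_2,K)^2$.

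The only subtle point to check carefully is the middle step: one must verify that, once the labels $\pi_u$ are used to identify neighbors of $u$ in $G_1$ with elements of $V_2$, the resulting object on $V_2\times V_2$ really is in the form to which $\gamma_+(G_2,K)$ applies, with the $V_2$-edges inherited from $G_2$ via the labeling $\kappa_{(\cdot)}$. This is a purely bookkeeping matter but is the place where the precise definition of $\oz$ is being used; everything else is straightforward summation and multiplication of the three inequalities. Notably, the argument never uses any property of $K$ beyond symmetry and nonnegativity, which is why the conclusion holds for arbitrary kernels.
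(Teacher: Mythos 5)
Your proposal is correct and follows essentially the same route as the paper: one slice-wise application of the $\gamma_+(G_1,K)$ inequality in the $V_1$-coordinate followed by two applications of the $\gamma_+(G_2,K)$ inequality (reindexing neighbors via $\pi_u$ and $\pi_v$ respectively), with the final triple sum over $\{u,v\}\in E_1$ and $i,j\in[d_2]$ identified with the edge set of $G_1\oz G_2$. The ``subtle point'' you flag is handled in the paper exactly as you describe, via the labelings $\kappa_a$.
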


In the special case $X=\R$ and $K(x,y)=(x-y)^2$, Theorem~\ref{thm:sub} becomes:
\begin{equation}\label{eq:RVW case}
\frac{1}{1-\lambda(G_1\oz G_2)}\le \frac{1}{1-\lambda(G_1)}\cdot \frac{1}{(1-\lambda(G_2))^2}.
\end{equation}
Thus $\lambda(G_1\oz G_2)\le f(\lambda(G_1),\lambda(G_2))$ where $f(\lambda_1,\lambda_2)<1$ when $\lambda_1,\lambda_2\in (0,1)$. This is the main result of Reingold-Vadhan-Wigderson~\cite{RVW}, and it coincides with the later bound of Reingold-Trevisan-Vadhan~\cite{RTV06}. We note that in~\cite{RVW} an improved bound for $\lambda(G_1\oz G_2)$ is obtained which is better than the bound of~\cite{RTV06} (and hence also~\eqref{eq:RVW case}), though this improvement has not been used (so far) in the literature. Theorem~\ref{thm:sub} shows that the fact that zigzag products preserve spectral gaps  has nothing to do with the underlying Euclidean geometry (or linear algebra) that was used in~\cite{RVW,RTV06}: this is a truly non-linear phenomenon which holds in much greater generality, and simply amounts to an iteration of the Poincar\'e inequality~\eqref{eq:kernel-poin+}. In the full version of this paper we present versions of the  sub-multiplicativity result in Theorem~\ref{thm:sub} for other types of graph products such as tensor products, replacement products~\ifsodaelse{\cite{RVW}}{\cite{Gromov-filling,RVW}}, \ifsodaelse{and}{} balanced replacement products~\cite{RVW}\ifsodaelse{.}{, and derandomized square~\cite{RV05}.}


\subsection{An iterative construction of super-expanders}


Let us briefly recall how the Euclidean case of Theorem~\ref{thm:sub} was used by Reingold-Vadhan-Wigderson~\cite{RVW} to construct expanders%
\ifsodaelse{}{ (see also the nice exposition in Section~9.2 of the survey~\cite{HLW})}.

For a graph $G=(V,E)$ and for $t\in \N$, let $G^t$ be the graph in which an edge between $u,v\in V$ is drawn for every walk in $G$ of length $t$ whose endpoints are $u,v$. Thus $A_{G^t}=(A_G)^t$ and if $G$ is $d$-regular then $G^t$ is $d^t$-regular.

Let $H$ be an arbitrary regular graph with $n_0$ vertices and degree $d_0$. Such a graph $H$ will be called the {\em base graph} in what follows. Fix $t_0\in \N$. Define $G_1=H^{2}$ and inductively  $G_{i+1}=G_i^{t_0}\oz H$ (in~\cite{RVW} it sufficed to take $t_0=2$, but we will quickly realize that in the non-linear setting we need to work with higher powers, which is the reason why we are stating the construction for general $t_0$). For this construction to be feasible we need to ensure that $n_0=d_0^{2t_0}$, since in that case for all $i\in \N$ the graph $G_i$ is well defined and has $n_0^{i}=d_0^{2it_0}$ vertices and degree $d_0^2$. Using the fact that $\lambda(G^{t_0})=\lambda(G)^{t_0}$, we see that $\lambda(G_1)=\lambda(H)^2$ and it follows from~\eqref{eq:RVW case} that for $i\in \N$ we have $\lambda(G_{i+1})\le 1-\left(1-\lambda(G_i)^{t_0}\right)(1-\lambda(H))^2$. We thus see inductively that if $\lambda(H)\le 1-(2-2^{1-t_0})^{-1/2}$ then $\lambda(G_i)\le \frac12$ for all $i\in \N$.

Given Theorem~\ref{thm:sub}, the above scheme suggests that we can repeat the Reingold-Vadhan-Wigderson iteration for every kernel $K$ for which a sufficiently good base graph $H$ exists. There is, however, an obstacle to this approach: we used above the identity $\lambda(A^t)=\lambda(A)^t$ ($t\in \N$) in order to increase the spectral gap of $G_i$ in each step of the iteration. While this identity is a trivial corollary of spectral calculus, and was thus the ``trivial part" of the construction in~\cite{RVW}, there is no reason to expect that $\gamma_+(A^t,K)$ decreases similarly with $t$ for other kernels $K:X\times X\to [0,\infty)$. To better grasp what is happening here let us examine the asymptotic behavior of $\gamma_+(A^t,|\cdot|^2)$ as a function of $t$ (here and in what follows $|\cdot |$ denotes the absolute value on $\R$):
\begin{multline}\label{eq:euclidean decay}
\gamma_+\left(A^t,|\cdot|^2\right)=\frac{1}{1-\lambda(A^t)}=\frac{1}{1-\lambda(A)^t}
\\
=\frac{1}{1-\left(1-\frac{1}{\gamma_+\left(A,|\cdot|^2\right)}\right)^t}\asymp \max\left\{1,\frac{\gamma_+\left(A,|\cdot|^2\right)}{t}\right\},
\end{multline}
where above, and in what follows, $\asymp$ denotes equivalence up to universal constants (we will also use the notation $\lesssim,\gtrsim$ to express the corresponding inequalities up to universal constants). Restating~\eqref{eq:euclidean decay} in words, raising a matrix to a large power $t\in \N$ corresponds to decreasing its (real) Poincar\'e constant by a factor of $t$ as long as it is possible to do so (note that the Poincar\'e constant is necessarily at least $1$).

For our scheme to work for other kernels $K:X\times X\to [0,\infty)$ we would like $K$ to satisfy a  ``spectral calculus" inequality of this type, i.e., an inequality which ensures that, if $\gamma_+(A,K)$ is large, then $\gamma_+(A^t,K)$ is much smaller than $\gamma_+(A,K)$ for sufficiently large $t\in \N$.
This is, in fact, not the case in general: in the Section~\ref{sec:no-decay}
we construct  a metric space $(X,d_X)$ such that for each $n\in \N$ there is a symmetric stochastic matrix $A_n$ such that $\gamma_+(A_n,d_X^2)\ge n$ yet for every $t\in \N$ there is $n_0\in \N$ such that for all $n\ge n_0$ we have $\gamma_+(A_n^t,d_X^2)\ge \frac12 \gamma_+(A_n,d_X^2)$. The question which metric spaces satisfy the required spectral calculus inequalities thus becomes a subtle issue which we believe is of fundamental importance, beyond the particular application that we present here. A large part of the present paper is devoted to addressing this question. We obtain rather satisfactory results which allow us to carry out a zigzag type construction of super-expanders --- expanders with respect to super-reflexive spaces --- though we are still quite far from a complete understanding of the behavior of non-linear spectral gaps under graph powers for non-Euclidean geometries.

We will introduce in Section~\ref{cesaro} a criterion on a metric space $(X,d_X)$, which is a bi-Lipschitz invariant, and we prove that it implies that for every $n,m\in\N$ and every $n\times n$ symmetric stochastic matrix $A$ the  Ces\`aro averages $\frac{1}{m}\sum_{t=0}^{m-1}A^t$ satisfy the following spectral calculus inequality:
\begin{equation}\label{eq:decay for gamma_+}
\gamma_+\left(\frac{1}{m}\sum_{t=0}^{m-1}A^t,d_X^2\right)\le C(X)\max\left\{1,\frac{\gamma_+\left(A,d_X^2\right)}{m^{\e(X)}}\right\},
\end{equation}
where $C(X),\e(X)\in (0,\infty)$ depend only on the geometry of $X$ but not on $n,m,A$. The fact that we can only prove such an inequality for Ces\`aro averages rather than powers does not create any difficulty in the ensuing argument, as we shall see presently. We postpone the discussion of the metric criterion which implies~\eqref{eq:decay for gamma_+} to Section~\ref{cesaro}. It suffices to say here that we show that certain classes of metric spaces, including super-reflexive normed spaces, satisfy this criterion, and hence also~\eqref{eq:decay for gamma_+}.

Note that Ces\`aro averages have the following combinatorial interpretation in the case of graphs: given an $n$-vertex $d$-regular graph $G=(V,E)$ let $\A_m(G)$ be the graph whose vertex set is $V$ and for every $t\in \{0,\ldots,m-1\}$ and $u,v\in V$ we draw $d^{m-1-t}$ edges joining $u,v$ for every walk in $G$ of length $t$ which starts at $u$ and terminates at $v$. With this definition $A_{\A_m(G)}=\frac{1}{m}\sum_{t=0}^{m-1}A_G^t$, and $\A_m(G)$ is $md^{m-1}$-regular. 
We will slightly abuse this  notation by also using the shorthand
$\A_m(A)=\frac 1m \sum_{t=0}^{m-1}A^t$, when $A$ is a matrix.

Another issue which we must overcome is the existence of a sufficiently good base graph $H$ for our zigzag iteration. This is also a subtle point which is discussed in Section~\ref{sec:base} below. In order to present our construction here, we first state the following lemma, and postpone the discussion about it to Section~\ref{sec:base}:

\begin{lemma}[Existence of base graphs]\label{lem:base-intro}
There exists an unbounded increasing sequence of integers $\{n_i\}_{i=1}^\infty$ satisfying $n_{i+1}\le 100n_i$ with the following property: for every $\delta\in (0,1)$ there is a sequence of regular graphs $\{H_i(\delta)\}_{i=1}^\infty$ such that $H_i(\delta)$ has $n_i$ vertices and degree $d_i(\delta)\le e^{(\log n_i)^{1-\delta}}$. Moreover, for every super-reflexive (even just $K$-convex) normed space $(X,\|\cdot\|_X)$ there exist $\delta_0(X),\gamma(X)\in (0,1)$ such that for all $0<\delta\le \delta_0(X)$ and all $i\in \N$, we have $\gamma_+\left(H_i(\delta),\|\cdot\|_X^2\right)\le \gamma(X)$.
\end{lemma}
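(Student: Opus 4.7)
The base graphs will be realised as Cayley graphs of the elementary abelian $2$-groups $\F_2^{k_i}$ with a carefully chosen symmetric generating multiset $S_i$. I take $k_i$ so that $n_i = 2^{k_i}$ satisfies $n_{i+1}/n_i \le 100$ (e.g.\ $k_i = \lceil i\log_2 10\rceil$) and let $H_i(\delta) = \mathrm{Cay}(\F_2^{k_i}, S_i)$ with $|S_i| \le e^{k_i^{1-\delta}}$. The plan is to pick $S_i$ so that its character-sum $\lambda_A = \widehat{\mathbf{1}}_{S_i}(A)/|S_i|$ approximates the Bonami--Beckner noise multiplier $\sigma^{|A|}$ for some $\sigma = \sigma(\delta)\in (0,1)$. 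Concretely, I would draw $S_i$ as a random sample of $N = |S_i|$ points i.i.d.\ from the product measure on $\F_2^{k_i}$ with coordinate bias $\sigma$; a Chernoff estimate combined with a union bound over the $2^{k_i}$ characters shows that with high probability $|\lambda_A - \sigma^{|A|}|$ is uniformly small in $A$ provided $|S_i|$ is polynomially larger than $k_i$, which is well within our budget.

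Given such an $S_i$, the Fourier multiplier $A_{H_i}$ acting on $L_2(\F_2^{k_i};X)$ is close in operator norm to the Bonami--Beckner noise operator $T_\sigma$: one may write $A_{H_i} = T_\sigma + E$, where $E$ is a multiplier of small operator norm (even for $X$-valued functions, because its Fourier coefficients are globally tiny so that crude projection bounds on low-frequency Walsh levels suffice). The principal ingredient for the non-linear spectral gap bound is Pisier's $K$-convexity theorem in its semigroup form, which asserts that for $X$ $K$-convex, $\|T_\sigma - \E\|_{L_2(X)\to L_2(X)} \le C(X)\,\sigma^{1/K(X)}$. Choosing $\sigma = \sigma(X)\in (0,1)$ small enough (depending on $K(X)$) and $\delta \le \delta_0(X)$ small enough that the error $E$ is negligible, we obtain $\|A_{H_i} - \E\|_{L_2(X)\to L_2(X)} \le 1/2$.

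The Poincar\'e inequality defining $\gamma_+$ then follows by unpacking definitions: applying the contraction to $f$ and $g$ separately, and invoking a routine bilinear symmetrisation together with the parallelogram estimate $\|x-y\|_X^2 \le 2\|x\|_X^2 + 2\|y\|_X^2$, one deduces
\[
\frac{1}{n_i^2}\sum_{u,v\in \F_2^{k_i}} \|f(u)-g(v)\|_X^2 \;\le\; \gamma(X)\cdot \frac{1}{n_i|S_i|}\sum_{\{u,v\}\in E(H_i)} \|f(u)-g(v)\|_X^2,
\]
with $\gamma(X)=O(1)$ depending only on $K(X)$.

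The main obstacle is the passage from scalar Fourier information to an $X$-valued operator norm estimate: without Parseval's identity, the mere fact that $|\lambda_A|\le \rho$ for $A\neq 0$ does not control $\|A_{H_i}-\E\|_{L_2(X)\to L_2(X)}$ in a general Banach space. Pisier's $K$-convexity theorem supplies precisely what is needed, but only when $A_{H_i}$ is (close to) a Bonami--Beckner noise operator; this is why $S_i$ must be designed so that its Fourier coefficients emulate $\sigma^{|A|}$ rather than merely being small, and the consequent quantitative loss in the $K$-convexity constant $K(X)$ is what forces $\delta_0(X)$ to be small.
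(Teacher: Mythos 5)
Your final step is sound: once one has an operator-norm contraction $\|A_{H_i}f\|_{L_2(X)}\le \lambda\|f\|_{L_2(X)}$ for mean-zero $f$ with $\lambda<1$, the Poincar\'e bound on $\gamma_+$ follows exactly as in Proposition~\ref{prop:linear=>nonlin}, and the assertion that $\|T_\sigma-\E\|_{L_p(X)\to L_p(X)}<1$ for $\sigma$ small enough depending on $K(X)$ is a correct consequence of Pisier's theorem (it is Corollary~\ref{coro:far away} with $m=1$). The gap is in the sparsification. You replace the dense noise operator $T_\sigma$ by a random Cayley graph $\mathrm{Cay}(\F_2^{k},S)$ whose \emph{scalar} Fourier multipliers $\lambda_A$ approximate $\sigma^{|A|}$ up to an additive $\e$, and then need $\|A_{H_i}-T_\sigma\|_{L_2(X)\to L_2(X)}$ to be small. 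In a Hilbert space Parseval gives this norm as $\max_A|\lambda_A-\sigma^{|A|}|\le\e$ (this is the Alon--Roichman argument), but in a general $K$-convex space there is no such identity: the error operator $E=\sum_A(\lambda_A-\sigma^{|A|})P_A$ can only be bounded crudely by $\sum_A|\lambda_A-\sigma^{|A|}|\cdot\|P_A\|\le 2^{k}\e$, and making $\e\le 2^{-k}$ by Chernoff forces $|S|\gtrsim 4^{k}$, destroying the degree bound. Splitting into Walsh levels does not help either: on low levels the projection bounds work, but on high levels you would need to control $A_{H_i}$ restricted to the tail space, which is exactly the quantity you cannot access. You flag this obstacle yourself in your last paragraph, but Pisier's theorem does not rescue it --- it applies to the genuine semigroup $T_z$, not to an operator whose scalar spectrum merely resembles that of $T_\sigma$. (A further sanity check: were your construction to work, it would produce base graphs of degree polynomial in $\log n_i$, strictly stronger than the $e^{(\log n_i)^{1-\delta}}$ of the lemma; the paper explicitly does not know how to achieve this even for $L_p$.)

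The paper's route is built to avoid precisely this issue. Instead of a constant noise rate plus sparsification, it takes the exact heat semigroup at a very small time $t\asymp n^{-1/\alpha}$ and restricts to the $m$-tail space by quotienting the cube by the dual of a good code (Claim~\ref{cl:KN-cosets}); Pisier's holomorphic extension of the semigroup, run through a quantitative version of a factorization argument of Pisier, yields the contraction $e^{-am t^{\alpha}}$ on $L_p^{\ge m}(X)$ (Lemma~\ref{lem:bounding-beckner}), which is a constant less than $1$ at $t\asymp m^{-1/\alpha}$. Because $t$ is small, the kernel of $T_t$ is concentrated on Hamming balls of radius $O(\tau n)$ with $\tau\asymp t$, so it can be converted to an honest unweighted Cayley graph of degree $e^{O(n^{1-1/\alpha}\log n)}$ by truncating and rounding the weights; crucially, Lemma~\ref{lem:noise-truncation-b} compares the two quadratic forms \emph{pointwise through the kernel weights}, an argument valid for arbitrary metric-space-valued $f,g$, rather than through any operator-norm approximation. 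The small $t$ is what buys the low degree; the code quotient is what makes small $t$ suffice. Your proposal has no analogue of either mechanism.
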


We briefly discuss a simple operation, called {\em edge completion}, which allows us to change the degree of a graph while not changing its Poincar\'e constant by too much. This operation will generate a lot of freedom in applying zigzag products, since we will be able to apply it in order to adjust the degrees of graphs so that their zigzag product becomes well-defined. Given a $d$-regular graph $G=(V,E)$ and an integer $d'\ge d$ define the $d'$-edge completion of $G$, denoted $\C_{d'}(G)=(V,E')$, as follows. Writing $d'=\ell d+r$ where $\ell\in \mathbb N$, and $r\in \{0,\ldots,d-1\}$, duplicate each edge in $G$ $\ell$ times, and add $r$ self-loops to each vertex of $G$. This makes $\C_{d'}(G)$  a $d'$-regular graph. It is evident from the definition of $\gamma_+(\cdot,\cdot)$ that for every kernel $K:X\times X\to [0,\infty)$ we have $\gamma_+\left(\C_{d'}(G),K\right)\le 2\gamma_+(G,K)$.

With these facts at hand, we can now construct an expander with respect to a given super-reflexive Banach space $X$.

\begin{lemma} \label{thm:expander-wrt-X}
For every super-reflexive Banach space $X$ there exist $C'(X)\ge 1$,
$d=d(X)\in \mathbb N$ and sequence of $d$-regular graphs
$\left\{F_j(X)\right\}_{j=1}^\infty$ whose number of vertices tend
to $\infty$ with $j$, such that  $\bpconst(F_j(X),\|\cdot\|_X^2)\le
C'(X)$ for every $j\in \N$.
\end{lemma}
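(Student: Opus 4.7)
The plan is to iterate the zigzag product against a fixed base graph, using sub-multiplicativity (Theorem~\ref{thm:sub}) to control $\gamma_+$ of the product and the Ces\`aro spectral calculus inequality~\eqref{eq:decay for gamma_+} to prevent the Poincar\'e constant from blowing up when we pass to Ces\`aro averages needed to restore contraction. First, since $X$ is super-reflexive it is $K$-convex, so Lemma~\ref{lem:base-intro} supplies, for every sufficiently small $\delta>0$, an infinite family of base graphs $H_i(\delta)$ with $\gamma_+(H_i(\delta),\|\cdot\|_X^2)\le \gamma(X)$; simultaneously the criterion of Section~\ref{cesaro} applied to super-reflexive spaces gives constants $C(X),\e(X)\in(0,\infty)$ in~\eqref{eq:decay for gamma_+}. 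I would fix once and for all an integer $m=m(X)$ large enough that
\[
2C(X)\gamma(X)^2\cdot m^{-\e(X)}\le \tfrac12,
\]
so that applying Ces\`aro-averaging of order $m$ and then zigzagging with $H$ produces a genuine contraction on $\gamma_+$.

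Next I would fix the base graph. Choose $\delta=\delta(X)\le \delta_0(X)$ small enough that
\[
m\cdot d_i(\delta)^{2(m-1)}\le m\cdot e^{2(m-1)(\log n_i)^{1-\delta}}\le n_i
\]
for all sufficiently large $i$; then pick $H=H_{i_0}(\delta)$ with $n_0:=|V(H)|$ vertices and degree $d_0:=d_{i_0}(\delta)$, so that $\gamma_+(H,\|\cdot\|_X^2)\le\gamma(X)$ and $n_0$ is large enough to absorb, via edge-completion, the degree $m\cdot d_0^{2(m-1)}$ of the Ces\`aro average of any $d_0^2$-regular graph. Set $d:=d_0^2$ and define the sequence by $F_1:=H$ (or any convenient $d_0$-regular graph of degree $n_0$ after edge-completion) and
\[
F_{j+1}\ :=\ \C_{n_0}\!\bigl(\A_m(F_j)\bigr)\ \oz\ H.
\]
By construction each $F_{j+1}$ is well-defined: $\C_{n_0}(\A_m(F_j))$ is $n_0$-regular, which matches $|V(H)|$, so the zigzag is legal; moreover $F_{j+1}$ is $d$-regular with $|V(F_{j+1})|=n_0\cdot|V(F_j)|$, so the vertex counts tend to $\infty$.

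To bound the Poincar\'e constant inductively, combine Theorem~\ref{thm:sub}, the doubling inequality $\gamma_+(\C_{d'}(G),K)\le 2\gamma_+(G,K)$ recorded just before the statement, and~\eqref{eq:decay for gamma_+}:
\[
\gamma_+(F_{j+1},\|\cdot\|_X^2)
\ \le\ 2\,\gamma_+\!\bigl(\A_m(F_j),\|\cdot\|_X^2\bigr)\cdot\gamma(X)^2
\ \le\ 2C(X)\gamma(X)^2\max\!\left\{1,\ \frac{\gamma_+(F_j,\|\cdot\|_X^2)}{m^{\e(X)}}\right\}.
\]
With the choice of $m$ above this reads $\gamma_+(F_{j+1})\le\max\{L,\tfrac12\gamma_+(F_j)\}$ where $L:=2C(X)\gamma(X)^2$, and a straightforward induction starting from the finite quantity $\gamma_+(F_1,\|\cdot\|_X^2)$ yields $\gamma_+(F_j,\|\cdot\|_X^2)\le C'(X)$ for $C'(X):=\max\{L,\gamma_+(F_1,\|\cdot\|_X^2)\}$, proving the lemma.

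The step I expect to require the most care is not the recursion itself, which is a clean imitation of Reingold-Vadhan-Wigderson once sub-multiplicativity and the Ces\`aro calculus inequality are available, but rather the \emph{compatibility of degrees}: one has to choose $m$ before choosing $H$, yet $H$ must have enough vertices to serve as the ``cloud'' graph for the zigzag with $\A_m(\cdot)$ of a $d_0^2$-regular graph. This is precisely where the moreover clause of Lemma~\ref{lem:base-intro}, allowing $\delta$ to be taken arbitrarily small so that the base graphs' degrees are sub-polynomial in their vertex counts, is crucial, and fixing $\delta=\delta(X)$ as a function of $m(X)$ closes the loop.
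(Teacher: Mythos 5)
Your proposal is correct and follows essentially the same route as the paper: base graph from Lemma~\ref{lem:base-intro}, the iteration $F_{j+1}=\C_{n_0}(\A_m(F_j))\oz H$ with degrees matched via edge completion, and the recursive bound combining Theorem~\ref{thm:sub}, the factor-$2$ loss from edge completion, and~\eqref{eq:decay for gamma_+}, closed by choosing the Ces\`aro order large enough to make the recursion contracting. The only (immaterial for this lemma) difference is that the paper ties all choices to a single integer parameter $k$ (taking $\delta=1/k$, $t_0=(2k)^{3k}$, and $k\ge\max\{C(X),\gamma(X),1/\e(X),1/\delta_0(X)\}$) so that the resulting two-parameter family $\{F_j(k)\}$ can later be fed into the diagonalization of Lemma~\ref{lem:super-expander}, whereas you fix $m$ and $\delta$ directly as functions of $X$.
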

\begin{proof}
Fix $k\in \N$ which will be determined presently. Write $t_0=(2k)^{3k}$, and let $i_0$ be the smallest $i\in \N$ such that $n_i\ge e^{(4t_0)^k}$. Note that this implies that $n_{i_0}\le 100 e^{(4t_0)^k}$. We shall now define inductively a sequence of graphs $\{F_j(k)\}_{j=0}^\infty$ as follows: $F_0(k)=H_{i_0}(1/k)$ (from Lemma~\ref{lem:base-intro}). The degree of $F_0(k)$ is $d_{i_0}\le e^{(\log n_{i_0})^{1-1/k}}$ and it has $n_{i_0}$ vertices. Define $F_1(k)=\C_{d_{i_0}^2}(F_0(k))$, so that the degree of $F_1(k)$ is $d_{i_0}^2$. Assume inductively that we defined $F_j(k)$ to be a graph with $n_{i_0}^j$ vertices and degree $d_{i_0}^2$. Then $\A_{t_0}(F_j(k))$ has $n_{i_0}^j$ vertices and degree $t_0d_{i_0}^{2(t_0-1)}\le n_{i_0}$ (where we used our choice of $i_0$). It follows the we can form the edge completion $\C_{n_{i_0}}(\A_{t_0}(F_j(k)))$, which has degree $n_{i_0}$, and therefore it is possible to form the zigzag product
$$
F_{j+1}(k)=\left(\C_{n_0}(\A_{t_0}(F_j(k)))\right)\oz F_0(k),
$$
which has degree $d_{i_0}^2$ and $n_{i_0}^{j+1}$ vertices.

Let $(X,\|\cdot\|_X)$ be a super-reflexive normed space and let $\delta_0(X),\gamma(X)$ and $\e(X),C(X)$ be as in Lemma~\ref{lem:base-intro} and~\eqref{eq:decay for gamma_+}, respectively. Assume that $k\ge 1/\delta_0(X)$. Then by Lemma~\ref{lem:base-intro} we have $\gamma_+(F_0(k),\|\cdot\|_X^2)\le \gamma(X)$, and therefore also by the basic property of edge completion, $\gamma_+(F_1(k),\|\cdot\|_X^2)\le 2\gamma(X)$. We can now apply Theorem~\ref{thm:sub} to get the following recursive estimate:
\begin{align*}
\gamma_+ \ifsodaelse{&}{} \left(F_{j+1}(k),\|\cdot\|_X^2\right)
\ifsodaelse{\\}{}
&\le \gamma_+\left(\C_{n_0}(\A_{t_0}(F_j(k))),\|\cdot\|_X^2\right)\gamma_+\left(F_0(k),\|\cdot\|_X^2\right)^2
\\ & \le
2\gamma_+\left(\A_{t_0}(F_j(k)),\|\cdot\|_X^2\right)\gamma(X)^2
\\ & \stackrel{\eqref{eq:decay for gamma_+}}{\le} 2C(X)\gamma(X)^2\max\left\{1,\frac{\gamma_+\left(F_j(k),\|\cdot\|_X^2\right)}{t_0^{\e(X)}}\right\}
\\ &=
2C(X)\gamma(X)^2\max\left\{1,\frac{\gamma_+\left(F_j(k),\|\cdot\|_X^2\right)}{(2k)^{3k\e(X)}}\right\}.
\end{align*}
It follows by induction that if $k\ge \max\left\{C(X),\gamma(X),1/\e(X),1/\delta_0(X)\right\}$ then for all $j\in \N$ we have $\gamma_+\left(F_{j}(k),\|\cdot\|_X^2\right)\le 2C(X)\gamma(X)^2$.
\end{proof}

\mnote{Deleted the union argument of all expanders. It is replaced with a better diagonalization  argument}

Lemma~\ref{thm:expander-wrt-X} is not the final word. Our goal is to
construct a super-expander, i.e., \emph{one} sequence
$\{H_i\}_{i=1}^\infty$ of bounded degree graphs which is expander
family with respect to \emph{all} super-reflexive Banach spaces. In
order to do so, we begin by examining more closely the proof of
Theorem~\ref{thm:expander-wrt-X} and the parameters in
Lemma~\ref{lem:base-intro} and in~\eqref{eq:decay for gamma_+}.


The proof of Theorem~\ref{thm:expander-wrt-X} gives (i) for each $k\in \N$ a sequence of graphs $\left\{F_j(k)\right\}_{j=0}^\infty$, such that $F_j(k)$ has degree $d_k$ and $n_j(k)$ vertices, where $\{n_j(k)\}_{j=0}^\infty$ is a strictly increasing sequence of integers; (ii) a nondecreasing sequence   $\left\{C'_k\right\}_{k=1}^\infty$ such that
for every super-reflexive Banach space $X$, there exists $k_1(X)\in \N$ such that for every integer $k\ge k_1(X)$ and $j\in \N$,
\begin{equation} \label{eq:prop-1}
\gamma_+\left(F_j(k), \|\cdot\|_X^2\right)\le C'_{k_1(X)}.
\end{equation}

From the proof of~\eqref{eq:decay for gamma_+} in
Section~\ref{cesaro} we infer that there exist a nondecreasing
sequence  $\left\{C_k''\right\}_{k=1}^\infty\subseteq (0,\infty)$
and a nonincreasing sequence
$\left\{\e_k\right\}_{k=1}^\infty\subseteq (0,1)$, such that for any
super-reflexive Banach space $X$ there exists $k_2(X)\in \N$ such
that for any regular graph $G$ and $t\in \mathbb N$,
\begin{equation}\label{eq:prop-2}
\bpconst\left(\sarkozy_t(G),\|\cdot\|_X^2\right) \le C_{k_2(X)}'' \cdot \max\left\{ 1, \frac{\bpconst\left(G,\|\cdot\|_X^2\right)}{t^{\e_{k_2(X)}}} \right \} .
\end{equation}
The analogues of properties~\eqref{eq:prop-1} and~\eqref{eq:prop-2}
for a general family of kernels suffice to prove the following
lemma:
\begin{lemma}\label{lem:super-expander}
Let $\left\{F_j(k)\right\}_{j,k\in \N}$ be graphs as described
before~\eqref{eq:prop-1}, i.e., $F_j(k)$ has degree $d_k$ and
$n_j(k)$ vertices, where $\{n_j(k)\}_{j=0}^\infty$ is strictly
increasing. Assume that $\mathscr K$ is a family of kernels such
that $\gamma_+(F_j(k),K)<\infty$ for all $K\in \mathscr K$ and
$j,k\in \N$. Assume also that $\mathscr K$
satisfies~\eqref{eq:prop-1} and~\eqref{eq:prop-2}, i.e.,
\begin{itemize}
\item  There is a nondecreasing sequence   $\left\{C'_k\right\}_{k=1}^\infty$ such that
for every $K\in \mathscr K$ there exists $k_1(K)\in \N$ such that for every integer $k\ge k_1(K)$ and $j\in \N$,
\begin{equation} \label{eq:prop-1'}
\gamma_+\left(F_j(k), K\right)\le C'_{k_1(K)}.
\end{equation}
\item There exist a nondecreasing sequence  $\left\{C_k''\right\}_{k=1}^\infty\subseteq (0,\infty)$ and a nonincreasing sequence $\left\{\e_k\right\}_{k=1}^\infty\subseteq (0,1)$, such that for any $K\in \mathscr K$ there exists
$k_2(K)\in \N$ such that for any regular graph $G$ and $t\in \mathbb N$,
\begin{equation}\label{eq:prop-2'}
\bpconst\left(\sarkozy_t(G),K\right) \le C_{k_2(K)}'' \cdot \max\left\{ 1, \frac{\bpconst\left(G,K\right)}{t^{\e_{k_2(K)}}} \right \} .
\end{equation}
\end{itemize}
Then there exists $d\in \N$ and a sequence of $d$-regular graphs
$\{H_i\}_{i=1}^\infty$  whose number of vertices tends to $\infty$
with $i$, such that for every $K\in \mathscr K$ there exists
$C(K)\in (0,\infty)$ satisfying $\bpconst(H_i,K)\le C(K)$ for all
$i\in \N$.
\end{lemma}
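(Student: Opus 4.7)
The plan is to imitate the iterative construction from the proof of Lemma~\ref{thm:expander-wrt-X}, but with the crucial twist that the "level" parameter $k$ is \emph{not} frozen: it is increased, in a diagonal fashion, along the iteration, so that every $K\in\mathscr K$ is eventually absorbed into the estimate once the level exceeds $\max(k_1(K),k_2(K))$. Concretely, I would fix an unbounded nondecreasing sequence of levels $k_1\le k_2\le\cdots$ with $k_i\to\infty$, an aggressively growing sequence $t_i\in\N$ to be specified below, and define $H_0$ to be (an edge-completion of) $F_0(k_0)$, then iterate
\[
 H_{i+1}\;=\;\C_{n_{j_i}(k_i)}\bigl(\A_{t_i}(H_i)\bigr)\;\oz\;F_{j_i}(k_i),
\]
choosing the index $j_i$ and the target degree in the edge-completion so that the vertex count of the base $F_{j_i}(k_i)$ matches the degree of $\A_{t_i}(H_i)$ (feasible because $\{n_j(k_i)\}_{j\ge 0}$ is strictly increasing). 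The common degree $d$ of the $H_i$ is fixed at the outset and is maintained through the iteration by a final edge-completion $\C_d$ at each stage, which, by the basic edge-completion bound $\gamma_+(\C_{d'}(G),K)\le 2\gamma_+(G,K)$, costs only a factor of $2$ in all Poincar\'e estimates.

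The key analytic step is to combine Theorem~\ref{thm:sub} (sub-multiplicativity of $\gamma_+$ under $\oz$) with the two quantitative hypotheses~\eqref{eq:prop-1'} and~\eqref{eq:prop-2'}. Fix $K\in\mathscr K$ and set $k(K)=\max(k_1(K),k_2(K))$. For all $i$ large enough that $k_i\ge k(K)$, hypothesis~\eqref{eq:prop-1'} yields $\gamma_+(F_{j_i}(k_i),K)\le C'_{k(K)}$, while hypothesis~\eqref{eq:prop-2'} (applied to $H_i$, whose $\gamma_+(\cdot,K)$ is finite by the previous step of the induction) gives
\[
 \gamma_+\bigl(\A_{t_i}(H_i),K\bigr)\;\le\; C''_{k(K)}\cdot\max\!\left\{1,\;\frac{\gamma_+(H_i,K)}{t_i^{\,\varepsilon_{k(K)}}}\right\}.
\]
Plugging these into Theorem~\ref{thm:sub} and accounting for the two edge-completion factors of $2$ yields the one-step recursion
\[
 \gamma_+(H_{i+1},K)\;\le\; 4\,C''_{k(K)}\bigl(C'_{k(K)}\bigr)^{2}\cdot\max\!\left\{1,\;\frac{\gamma_+(H_i,K)}{t_i^{\,\varepsilon_{k(K)}}}\right\}.
\]
Hence, once the level $k_i$ has overtaken $k(K)$ and $t_i$ is taken large enough that $t_i^{\varepsilon_{k(K)}}\ge 8C''_{k(K)}(C'_{k(K)})^{2}$, the map $\gamma_+(H_i,K)\mapsto\gamma_+(H_{i+1},K)$ is a contraction towards the fixed point $4C''_{k(K)}(C'_{k(K)})^{2}$, so $\sup_i\gamma_+(H_i,K)<\infty$ (finiteness for the finitely many earlier indices $i$ is automatic from the standing assumption $\gamma_+(F_j(k),K)<\infty$).

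To make this work uniformly in $K$, I would choose the growth of $t_i$ to outrun \emph{any} fixed exponent from the nonincreasing sequence $\{\varepsilon_k\}$: e.g.\ $t_i$ is picked so that $t_i^{\varepsilon_k}\to\infty$ for every $k$ (which is possible because $\varepsilon_k>0$ for each $k$, and we only need the contraction to kick in eventually for each $K$). This gives a single sequence $\{H_i\}$ of $d$-regular graphs with vertex counts tending to infinity such that $\sup_i\gamma_+(H_i,K)<\infty$ for every $K\in\mathscr K$, which is what the lemma asserts.

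The point I expect to require the most care is the coordination of the diagonal: the \emph{same} sequences $\{k_i\}$ and $\{t_i\}$ must simultaneously drive the recursion into its contractive regime for \emph{every} kernel $K\in\mathscr K$, despite the fact that the thresholds $k(K)$ and the contraction exponents $\varepsilon_{k(K)}$ are not uniform in $K$. This is the analogue of the usual diagonalization trick: one sets $k_i=i$ and picks $t_i$ tending to infinity fast enough that $t_i^{\varepsilon_i}\to\infty$, which is a condition on the profile $\{\varepsilon_k\}$ alone and hence can be verified once and for all. A secondary technical issue, the matching of degrees and vertex counts at the $\oz$-step, is handled purely combinatorially by edge completion and by choosing the index $j_i$ so that $n_{j_i}(k_i)$ equals the current degree of $\A_{t_i}(H_i)$, paying at most a bounded multiplicative factor in each estimate.
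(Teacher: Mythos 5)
Your proposal correctly identifies the two main ingredients (diagonalizing over the level $k$, and using the Ces\`aro-averaging hypothesis~\eqref{eq:prop-2'} together with Theorem~\ref{thm:sub} to set up a contraction), and your contraction analysis per fixed $K$ is sound as far as it goes. But there is a genuine gap at the step you treat as routine: degree control. In your iteration $H_{i+1}=\C(\A_{t_i}(H_i))\oz F_{j_i}(k_i)$ the resulting graph has degree $d_{k_i}^2$, where $d_{k_i}$ is the degree of the base graph at level $k_i$. Since $k_i\to\infty$ and the degrees $d_k$ are unbounded in $k$ (nothing in the hypotheses bounds them, and in the intended application they grow very rapidly with $k$ --- this is precisely the obstacle the lemma is meant to overcome, as the paper notes after defining the diagonal sequence $L_k=F_{j(k)}(k)$: its Poincar\'e constants are bounded but ``its degrees are unbounded''), your graphs $H_i$ have unbounded degree. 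Your proposed fix --- ``a final edge-completion $\C_d$ at each stage'' --- cannot work, because edge completion $\C_{d'}(G)$ is only defined for $d'\ge \deg(G)$: it duplicates edges and adds self-loops, so it can only \emph{raise} the degree, never lower it to a fixed $d$. With unbounded degree the conclusion of the lemma (a $d$-regular sequence) is not obtained, and moreover the Gromov-type non-embeddability argument that motivates the lemma would break down.

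The paper's proof resolves exactly this issue by a different architecture. It first forms the diagonal $L_k=F_{j(k)}(k)$ (your idea, essentially), and then, \emph{for each fixed $k$}, runs a finite \emph{descending} cascade: it zigzags with $L_{h_1(k)}, L_{h_2(k)},\dots$ where $k>h_1(k)>h_2(k)>\cdots$ strictly decrease until reaching level $1$, interleaving Ces\`aro averages $\A_{m_{h_i(k)}}$ and edge completions so that degrees and vertex counts match at each step. Because the cascade terminates at level $1$, the final graph $H_k$ has the universal degree $m_1 d_1^{2m_1}$. The analysis then splits into two regimes: while $h_i(k)>k_0(K)$ the choice $m_h=\lceil(2C_h^3)^{1/\e_h}\rceil$ makes the averaging exactly cancel the sub-multiplicative loss, so the constant $C_{k_0(K)}$ is preserved; for the at most $k_0(K)$ remaining steps one uses crude bounds, each contributing a factor depending only on $K$ (via $\gamma_+(L_{h},K)$ for $h\le k_0(K)$) and not on $k$. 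You would need to replace your single ascending iteration by such a per-$k$ descending cascade (or an equivalent device) to get bounded degree; the ascending scheme as written does not prove the lemma.
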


\begin{proof}
Set
$C_k=\max\left\{C'_k,C_k'' \right\}$ and
$m_k=\left\lceil (2C_{k}^3)^{1/\e_{k}} \right\rceil$. Let $j(k)$ be
the smallest $j\in \N$ such that $n_{j}(k) > \max\left\{k,  m_{k+1}
d_{k+1}^{2m_{k+1}}\right\}$. We define $L_k=F_{j(k)}(k)$. While the
sequence $\left\{\gamma_+(L_k,K)\right\}_{k=1}^\infty$ is bounded
for any $K\in \mathscr K$, this is not yet the desired graph
sequence since its degrees are unbounded.

We next define for every $k\in \N$ a sequence of graphs
$L_{k,0},L_{k,1},\ldots, L_{k,\ell(k)}$ as follows. Along with
$\left\{L_{k,j}\right\}_{j=0}^{\ell(k)}$ we shall also define an
auxiliary sequence $\left\{h_j(k)\right\}_{j=0}^{\ell(k)-1}
\subseteq \mathbb N$.
Set $L_{k,0}=L_k$ and $h_0(k)=k$. Define $h_1(k)$ to be the smallest
$h\in \N$ such that $n_{j(h)}(h)> d_{h_0(k)}$. From the definition
of $j(\cdot)$, we have that $h_1(k)< h_0(k)$. Define
$$L_{k,1}=\mathscr A_{m_{h_1(k)}}\left(\mathcal
C_{n_{j(h_1(k))}(h_1(k))}(L_{k,0}) \oz L_{h_1(k)}\right).$$ Note
that the degree of $L_{k,1}$ is $m_{h_1(k)}
d_{h_1(k)}^{2m_{h_1(k)}}$.

Inductively, assume we have already defined $L_{k,i-1}$, and
$h_{i-1}(k)$, and that the degree of $L_{k,i-1}$ is $m_{h_{i-1}(k)}
d_{h_{i-1}(k)}^{2m_{h_{i-1}(k)}}$. Define $h_i(k)$ to be the
smallest $h\in \N$ such that  $n_{j(h)}(h)> m_{h_{i-1}(k)}^2
d_{h_{i-1}(k)}^{2m_{h_{i-1}(k)}}$. From the definition of
$j(\cdot)$, we have that $h_i(k)< h_{i-1}(k)$. Define
\[ L_{k,i}= \A_{m_{h_i(k)}}\left(\C_{n_{j(h_i(k))}(h_i(k))}(L_{k,i-1}) \oz L_{h_i(k)}\right). \]
Note that the degree of $L_{k,i}$ is $m_{h_i(k)}
d_{h_i(k)}^{2m_{h_i(k)}}$.

Continue this way, until $h_i(k)=1$. At that point, set
$\ell(k)=i+1$, and define
\[ L_{k,\ell(k)}=\A_{m_1}(\C_{n_{j(1)}(1)}(L_{k,\ell(k)-1}) \oz L_1). \]
Hence $L_{k,\ell(k)}$ has degree $m_1d_1^{2m_1}$ which is a
universal constant. Define  $H_k=L_{k,\ell(k)}$. We shall now show
that $\{H_k\}_{k=1}^\infty$ is the desired graph sequence.

 Fix $K\in \mathscr K$ and  set $k_0(K)=\max\{k_1(K),k_2(K)\}$.
Now fix $k>k_0(K)$. We shall first estimate
$\gamma_+\left(L_{k,i},K\right)$, when $h_i(k)> k_0(K)$. By
induction on $i$ we shall show that as long as $h_i(k)>k_0(K)$, we
have $\bpconst\left(L_{k,i},K\right)\le C_{k_0(K)}$. If
$h_0(k)=k>k_0(K)\ge k_1(K)$, then by our
assumption~\eqref{eq:prop-1'},
$\bpconst(L_{k,0},K)=\bpconst(F_{j(k)}(k),K)\le C_{k_0(K)}$. Assume
next that $h_i(k)>k_0(K)$, and inductively that
$\bpconst(L_{k,i-1},K) \le C_{k_0(K)}$. Then using
Theorem~\ref{thm:sub} and~\eqref{eq:prop-2'},
 \begin{multline*}
  \bpconst(L_{k,i},K)
\ifsodaelse{\\}{}
  \le C_{k_0(K)} \cdot\max\left \{1, \frac{2\bpconst(L_{k,i-1},K) \cdot
  \bpconst(L_{h_i(k)},K)^2}{m_{h_i(k)}^{\e_{h_i(k)}} }\right\}
\\
 \stackrel{\eqref{eq:prop-1'}}{\le} C_{k_0(K)}
 \max\left\{1,  \frac{2C_{k_0(K)}^3}{ m_{h_i(k)}^{\e_{h_i(k)}}}\right\} \le C_{k_0(K)}.
\end{multline*}

If we let $i_0(k)$ be the largest $i$ such that $h_i(k)>k_0(K)$,
then we have just proved that $\bpconst(L_{k,i_0(k)},K)\le
C_{k_0(K)}$. For $i>i_0(k)$, we use shall only rough bounds:
 \begin{multline}\label{eq:big i}
  \bpconst(L_{k,i},K)
\ifsodaelse{\\}{}
  \le C_{k_0(K)}\cdot \max\left \{1, \frac{2\bpconst(L_{k,i-1},K) \cdot
  \bpconst(L_{h_i(k)},K)^2}{m_{h_i(k)}^{\e_{h_i(k)}} }\right\}\\\le
  2C_{k_0(K)} \bpconst(L_{h_i(k)},K)^2\bpconst(L_{k,i-1},K).
  \end{multline}
By iterating~\eqref{eq:big i} we obtain the crude bound:
\begin{equation} \label{eq:final-super-expander}
   \bpconst(H_k,K)=\bpconst(L_{k,\ell(k)},K)
\ifsodaelse{\\}{}
   \le  (2C_{k_0}(K))^{k_0(K)}
   \prod_{i=1}^{k_0(K)} \bpconst(L_{i},K)^2.
\end{equation}
The right hand side of~\eqref{eq:final-super-expander} depends on
$K$, but not on $k$. Hence  $\{\bpconst(H_k,K)\}_{k=1}^\infty$ is
bounded for every $K\in \mathscr K$. It remains to argue that the
number of vertices of $H_k$ tends to $\infty$ with $k$. Indeed, the
number of vertices in $H_k$ is at least as the number of vertices of
$L_k=F_{j(k)}(k)$ which is $n_{j(k)}(k)$, and from the definition of
$j(k)$, we have $n_{j(k)}(k) >k$.
\end{proof}

\begin{theorem}[Existence of super-expanders] \label{thm:generic-super-expander}
There exists a sequence of $9$-regular graphs
$\{G_i\}_{i=1}^\infty$, whose number of vertices tends to $\infty$
with $i$, such that for any super-reflexive Banach space
$(X,\|\cdot\|_X)$ there is $C(X)\in (0,\infty)$, such that
$\bpconst(G_i,\|\cdot\|_X^2)\le C(X)$ for every $i\in \N$.
\end{theorem}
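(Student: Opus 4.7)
The plan is to deduce the theorem as a direct instantiation of Lemma~\ref{lem:super-expander} applied to the family of kernels $\mathscr K = \{K_X : X \text{ a super-reflexive Banach space}\}$, where $K_X(x,y) = \|x-y\|_X^2$. The graphs $\{F_j(k)\}_{j,k\in\N}$ demanded by that lemma already exist: they are precisely the graphs built inductively in the proof of Lemma~\ref{thm:expander-wrt-X}, and crucially their construction is purely combinatorial, depending only on the integers $j,k$ and on the base graphs $H_i(1/k)$ from Lemma~\ref{lem:base-intro} but \emph{not} on any target Banach space. So my task reduces to verifying the two hypotheses~\eqref{eq:prop-1'} and~\eqref{eq:prop-2'} of Lemma~\ref{lem:super-expander} for the family $\mathscr K$.

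For~\eqref{eq:prop-1'} I would unfold the recursion at the end of the proof of Lemma~\ref{thm:expander-wrt-X}: once the integer $k$ exceeds the threshold $\max\{C(X),\gamma(X),1/\e(X),1/\delta_0(X)\} =: k_1(K_X)$, that proof already establishes the uniform bound $\gamma_+(F_j(k),\|\cdot\|_X^2)\le 2C(X)\gamma(X)^2$ for every $j\in\N$. Defining $C'_k$ as a running maximum of $2C(X)\gamma(X)^2$ over all super-reflexive $X$ whose threshold $k_1(K_X)$ is at most $k$ yields the required nondecreasing sequence. Hypothesis~\eqref{eq:prop-2'} is then handled in exactly the same way, by invoking the Ces\`aro spectral-calculus inequality~\eqref{eq:decay for gamma_+} proved in Section~\ref{cesaro}: its $X$-dependent constants become universal nondecreasing $\{C_k''\}$ and nonincreasing $\{\e_k\}$ once the appropriate threshold $k_2(K_X)$ is defined. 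Applying Lemma~\ref{lem:super-expander} now produces a sequence $\{H_i\}$ of $d$-regular graphs with $|V(H_i)|\to\infty$ satisfying $\sup_i \gamma_+(H_i,\|\cdot\|_X^2) < \infty$ for every super-reflexive $X$, where $d$ is the universal constant $m_1 d_1^{2m_1}$ emerging from the terminal step of the iteration.

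To arrange the precise degree $9$ in the theorem statement, I would tune the terminal base graph $L_1$ in the construction of Lemma~\ref{lem:super-expander} to be a fixed $3$-regular graph of the appropriate size, so that its final zigzag step produces degree $3^2 = 9$; the Poincar\'e constant $\gamma_+$ of a single fixed finite graph with respect to any Banach space is controlled by the graph's diameter and vertex count alone (via a path-decomposition argument combined with convexity of $\|\cdot\|_X^2$), so it does not interfere with uniformity in $X$. The main obstacle I anticipate is purely bookkeeping: the $X$-dependent constants coming from Lemma~\ref{thm:expander-wrt-X} and from~\eqref{eq:decay for gamma_+} are a priori quantified existentially for each $X$ separately, so one must verify that the running-maximum repackaging producing $\{C'_k\}$ and $\{C_k''\}$ is genuinely finite at every $k$ (i.e., the family of super-reflexive spaces with a given threshold does not blow those constants up), and that the two threshold functions $k_1,k_2$ can be fused into a single $k_i(K_X)=\max\{k_1(K_X),k_2(K_X)\}$ without loss.
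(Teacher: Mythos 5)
Your main reduction --- feeding the kernel family $\mathscr K=\{\|\cdot\|_X^2: X\ \text{super-reflexive}\}$ into Lemma~\ref{lem:super-expander}, with hypotheses~\eqref{eq:prop-1'} and~\eqref{eq:prop-2'} supplied by the proof of Lemma~\ref{thm:expander-wrt-X} and by~\eqref{eq:decay for gamma_+} --- is exactly the paper's argument, and the ``bookkeeping obstacle'' you flag is resolved immediately: the threshold $k\ge\max\{C(X),\gamma(X),1/\e(X),1/\delta_0(X)\}$ itself forces $C(X),\gamma(X)\le k$ and $\e(X)\ge 1/k$, so your running maxima are bounded by explicit universal sequences (e.g.\ $C_k'\le 2k^3$, $\e_k=1/k$). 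The genuine gap is in your degree-$9$ step. The terminal graph of Lemma~\ref{lem:super-expander} is $L_{k,\ell(k)}=\A_{m_1}\bigl(\C_{n_{j(1)}(1)}(L_{k,\ell(k)-1})\oz L_1\bigr)$: the \emph{last} operation applied is a Ces\`aro average, not a zigzag product, so even if $L_1$ were $3$-regular the output degree would be $m_1\cdot 9^{\,m_1-1}$ with $m_1=\lceil(2C_1^3)^{1/\e_1}\rceil\ge 2$, not $9$. Moreover $L_1$ is not a free parameter: it must have exactly $n_{j(1)}(1)$ vertices to match the degree of the edge-completed factor, and the descending sequence $h_0(k)>h_1(k)>\cdots$ is wired to terminate at $L_1=F_{j(1)}(1)$, so swapping it for an arbitrary $3$-regular graph forces you to redo the whole induction.

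The correct (and simpler) fix --- which is what the paper does --- is to perform the degree reduction \emph{after} invoking Lemma~\ref{lem:super-expander}. Let $\{H_k\}_{k=1}^\infty$ be the resulting $d$-regular family with $d$ universal and $\bpconst(H_k,\|\cdot\|_X^2)\le C(X)$, and let $C_d^\circ$ be the $d$-vertex cycle with a self-loop at each vertex, a $3$-regular graph on exactly $d$ vertices. The triangle inequality gives $\gamma_+(C_d^\circ,d_Y^2)\le 4d^2$ for \emph{every} metric space $(Y,d_Y)$, so Theorem~\ref{thm:sub} yields $\gamma_+\bigl(H_k\oz C_d^\circ,\|\cdot\|_X^2\bigr)\le 16d^4C(X)$, and $H_k\oz C_d^\circ$ is $3^2=9$-regular. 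This is the same idea you had (one extra zigzag with a universally good $3$-regular partner), but placed at the point where the partner's vertex count is forced to equal the fixed degree $d$, rather than inside the inductive construction where a Ces\`aro average still follows.
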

\begin{proof}
From the discussion before Lemma~\ref{lem:super-expander}, and by
Lemma~\ref{lem:super-expander} itself, we conclude that there exists
a universal constant $d\in\mathbb N$, and $d$-regular super-expander
family $\{H_k\}_{k=1}^\infty$, with $\bpconst(H_k,\|\cdot\|_X^2)\le
C(X)$.

Let $C_m^\circ$ be the $m$-vertex cycle with self-loops. It is an
easy consequence of the triangle inequality that for every metric
space $(Y,d_Y)$ we have $\gamma_+(C_m^\circ,d_Y^2)\le 4m^2$. Thus,
using Theorem~\ref{thm:sub} once more we see that for all $k\in
\mathbb N$ we have
\[ \gamma_+\left(H_k \oz C^\circ_a,\|\cdot\|_X^2\right)\le 16d^4 C(X), \]
which is a constant depending only on $X$ but not on $k$. Hence
$\{H_k \oz C^\circ_a\}_{k=1}^\infty$ is the required sequence of
$9$-regular super-expanders.
\end{proof}

\begin{remark}
Similarly, the sequence $\{(H_k\oz C^o_a)\circr C_9\}_{k=1}^\infty$
is a 3-regular super-expander, where, $C_9$ is the cycle on 9
vertices (without self-loops), and ``$\circr$'' is the replacement
(graph) product (cf.~\cite{RVW}). In the full version of this
extended abstract we analyze the replacement product similarly to
the zigzag product analysis presented here.
\end{remark}

\begin{remark}\label{rem:lafforgue}
Lafforgue~\cite{Lafforgue} asked whether there exists a sequence
of bounded degree graphs $\{G_k\}_{k=1}^\infty$ which does not admit
a coarse embedding (with uniform moduli) into any $K$-convex Banach
space. An examination of Lafforgue's argument
shows the his method produces graphs $\{H_j(k)\}_{j,k\in \N}$ such
that for each $k\in \N$ the graphs $\{H_j(k)\}_{j\in \N}$ have
degree $d_k$, their cardinalities are unbounded, and for every
$K$-convex Banach space $(X,\|\cdot\|_X^2)$ there is some $k\in \N$
for which $\sup_{j\in \N}\gamma_+(H_j(k),\|\cdot\|_X^2)<\infty$. The
problem is that the degrees $\{d_k\}_{k\in \N}$ are unbounded, but
this can be overcome as above by applying the zigzag product with a
cycle with self-loops. Indeed, define $G_j(k)=H_j(k)\oz
C_{d_k}^\circ$. Then $G_j(k)$ is $9$-regular, and as argued in the
proof of Theorem~\ref{thm:generic-super-expander}, we still have
$\sup_{j\in \N}\gamma_+(G_j(k),\|\cdot\|_X^2)<\infty$. To get a
single sequence of graphs which does not admit a coarse embedding
 into any $K$-convex Banach space, fix a bijection $\psi=(a,b):\N\to \N\times \N$, and define
$G_m=G_{a(m)}(b(m))$. The graphs $G_m$ all have degree $9$. If $X$
is $K$-convex then choose $k\in \N$ as above. If we let $m_j\in \N$
be such that $\psi(m_j)=(j,k)$ then we have shown that the graphs
$\{G_{m_j}\}_{j=1}^\infty$ are arbitrarily large, have bounded
degree, and  satisfy $\sup_{j\in
\N}\gamma_+(G_{m_j},\|\cdot\|_X^2)<\infty$. The argument that was
presented in Section~\ref{sec:coarse} implies that
$\{G_m\}_{m=1}^\infty$ do not embed coarsely into $X$.
\end{remark}

\subsection{Ces\`aro averages of matrices: coping with new issues that arise from non-linearity}\label{cesaro}

The goal in this section is to prove~\eqref{eq:decay for gamma_+}, i.e., the spectral calculus
inequality for Ces\'aro averages in super-reflexive spaces.

Recall that we use the following notation $\A_m(A) =\frac 1m \sum_{t=0}^{m-1}A^t$. Fix $\e>0$ and $C\in(0,\infty)$. Assume that a metric space $(X,d_X)$ satisfies the following property, which we call {\em metric Markov cotype $\frac{2}{\e}$ with constant $C$ and exponent $2$}: for every $m,n\in \N$, every symmetric stochastic $n\times n$ matrix $A=(a_{ij})$, and every $x_1,\ldots,x_n\in X$ there exist $y_1,\ldots,y_n\in X$ which satisfy the following inequality:
\begin{eqnarray}\label{eq:intro def cotype}
\sum_{i=1}^n d_X(x_i,y_i)^2+m^\e\sum_{i=1}^n\sum_{j=1}^na_{ij}d_X(y_i,y_j)^2
\ifsodaelse{\\}{}
\le C^2\sum_{i=1}^n\sum_{j=1}^n \A_m(A)_{ij}\,d_X(x_i,x_j)^2.
\end{eqnarray}
The origin of this (admittedly cumbersome) name comes from a key {\em linear} property of normed
spaces that was introduced by Ball~\cite{Ball} under the name of {\em Markov cotype} in
his profound work on the Lipschitz extension problem. We will see below that any
super-reflexive normed space has metric Markov cotype $\frac{2}{\e}$ with
constant $C$ and exponent $2$ for some $\e\in (0,1]$ and $C\in (0,\infty)$
(our proof uses Ball's insights in~\cite{Ball}).

Informally,~\eqref{eq:intro def cotype} means that the average
square distance of the edges in an embedding of the graph $\frac 1m
\sum_0^{m-1} A^t$ into $X$ is  larger than the average square
distance of the edges in the graph $A$ by a factor of $m^\e$, in a
different embedding of the graph $A$ into $X$, which is near the
original embedding. This is formalized in the following claim whose
proof is in Section~\ref{sec:details-decay}.

\begin{claim} \label{cl:decay for gamma}
Assuming that $(X,d_X)$ has metric Markov cotype $\frac{2}{\e}$ with constant $C$ and exponent $2$, we have:
\begin{equation}\label{eq:decay for gamma}
\gamma\left(\A_m(A),d_X^2\right)\le 12C^2\max\left\{1,\frac{\gamma\left(A,d_X^2\right)}{m^\e}\right\}.
\end{equation}
\end{claim}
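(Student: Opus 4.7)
The plan is to unwind the definition of $\gamma(\A_m(A),d_X^2)$ directly: fix arbitrary points $x_1,\dots,x_n\in X$ and produce the auxiliary points $y_1,\dots,y_n\in X$ guaranteed by the metric Markov cotype hypothesis~\eqref{eq:intro def cotype} applied to $A$, $m$, and $(x_i)$. The whole proof is then a two-step comparison: we first use a $y$-interpolation of the triangle inequality to estimate the left-hand side of the Poincar\'e inequality for $\A_m(A)$, and then we spend the two terms of~\eqref{eq:intro def cotype} to pay for the two pieces of that estimate.

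Concretely, I would begin from the elementary bound $d_X(x_i,x_j)^2\le 3\bigl(d_X(x_i,y_i)^2+d_X(y_i,y_j)^2+d_X(x_j,y_j)^2\bigr)$ and average over $i,j\in\{1,\dots,n\}$. The two ``diagonal'' terms telescope into $\tfrac{6}{n}\sum_i d_X(x_i,y_i)^2$, while the middle term, after applying the defining Poincar\'e inequality of $\gamma(A,d_X^2)$ to the sequence $(y_i)$, is at most $\frac{3\gamma(A,d_X^2)}{n}\sum_{i,j}a_{ij}d_X(y_i,y_j)^2$. This yields
\begin{equation*}
\frac{1}{n^2}\sum_{i,j}d_X(x_i,x_j)^2\le \frac{6}{n}\sum_{i=1}^n d_X(x_i,y_i)^2+\frac{3\gamma(A,d_X^2)}{n}\sum_{i,j}a_{ij}d_X(y_i,y_j)^2.
\end{equation*}

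Now comes the decisive step: the metric Markov cotype inequality~\eqref{eq:intro def cotype}, since the left-hand side is a sum of nonnegative terms, gives \emph{both} $\sum_i d_X(x_i,y_i)^2\le C^2\sum_{i,j}\A_m(A)_{ij}d_X(x_i,x_j)^2$ and $\sum_{i,j}a_{ij}d_X(y_i,y_j)^2\le \tfrac{C^2}{m^\e}\sum_{i,j}\A_m(A)_{ij}d_X(x_i,x_j)^2$. Substituting these into the previous display produces
\begin{equation*}
\frac{1}{n^2}\sum_{i,j}d_X(x_i,x_j)^2\le \left(6C^2+\frac{3C^2\gamma(A,d_X^2)}{m^\e}\right)\frac{1}{n}\sum_{i,j}\A_m(A)_{ij}d_X(x_i,x_j)^2.
\end{equation*}

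A trivial case distinction on whether $\gamma(A,d_X^2)/m^\e$ is at most or at least $1$ shows that the prefactor is bounded by $12C^2\max\{1,\gamma(A,d_X^2)/m^\e\}$, which is exactly the desired estimate on $\gamma(\A_m(A),d_X^2)$. The only conceptually nontrivial ingredient is the metric Markov cotype hypothesis itself; once the right $(y_i)$'s are in hand, everything reduces to the triangle inequality plus two direct substitutions, so I do not anticipate any real obstacle in the proof of this claim—the genuine work lies in establishing the hypothesis~\eqref{eq:intro def cotype} for super-reflexive targets, which is done elsewhere in the paper.
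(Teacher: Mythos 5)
Your proof is correct and uses the same ingredients as the paper's own argument: the triangle-inequality decomposition through the Markov-cotype points $y_i$, the Poincar\'e inequality for $A$ applied to the $y_i$, and the two nonnegative halves of~\eqref{eq:intro def cotype}. The only difference is organizational --- you substitute directly for arbitrary $x_1,\dots,x_n$, whereas the paper assumes $\gamma(\A_m(A),d_X^2)>12C^2$ and works with witnessing points to absorb the displacement term --- and your bookkeeping in fact yields the slightly better prefactor $9C^2\max\{1,\gamma(A,d_X^2)/m^\e\}$.
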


In order to deduce~\eqref{eq:decay for gamma_+}, which is the
variant of~\eqref{eq:decay for gamma} for absolute spectral gaps, we
need the following two easy consequences of the  triangle
inequality. The proofs appear in Section~\ref{sec:details-decay}.
\begin{claim} \label{cl:gamma+-as-tensor}
For every doubly stochastic $n\times n$ matrix $A$, and any metric space $X$,
\begin{equation}\label{eq:2n}
 \frac25 \gamma\left( \left ( \begin{smallmatrix}
  0 & A \\
   A & 0
   \end{smallmatrix} \right ),d_X^2\right)\le \gamma_+\left( A,d_X^2\right)\le 2\gamma\left(
   \left(\begin{smallmatrix}
  0 & A \\
   A & 0
   \end{smallmatrix} \right ),d_X^2\right).
   \end{equation}
\end{claim}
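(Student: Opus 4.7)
The plan is to unpack both Poincaré inequalities explicitly by thinking of a function $f : [2n] \to X$ on the vertex set of the bipartite double cover as an ordered pair $(x,y)$ with $x,y:[n]\to X$ defined by $x_i = f(i)$ and $y_i = f(n+i)$. With this identification, and using that $A$ (being symmetric doubly stochastic) satisfies $A_{ij}=A_{ji}$, the Dirichlet form for $B = \bigl(\begin{smallmatrix}0&A\\A&0\end{smallmatrix}\bigr)$ simplifies as
\[
\sum_{p,q=1}^{2n} B_{pq}\,d_X(f(p),f(q))^2 \;=\; 2\sum_{i,j=1}^n A_{ij}\,d_X(x_i,y_j)^2,
\]
while the full average on the left splits into three blocks: two "same-side" blocks $\sum_{i,j}d_X(x_i,x_j)^2$ and $\sum_{i,j}d_X(y_i,y_j)^2$, plus twice a "cross" block $\sum_{i,j}d_X(x_i,y_j)^2$.

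For the easy direction $\gamma_+(A,d_X^2)\le 2\gamma(B,d_X^2)$, I would simply discard the two same-side blocks (they are nonnegative) and compare what remains: the cross block is exactly the quantity controlled by $\gamma_+(A,d_X^2)$, and the constants $1/(2n)^2$ versus $1/n^2$ on the left account for the factor of $2$.

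The nontrivial direction is the bound $\gamma(B,d_X^2)\le \tfrac{5}{2}\gamma_+(A,d_X^2)$. Here I need to control the same-side blocks by cross blocks, and the natural tool is the quadratic triangle inequality: for every $i,j,k$,
\[
d_X(x_i,x_j)^2 \;\le\; 2\,d_X(x_i,y_k)^2 + 2\,d_X(y_k,x_j)^2.
\]
Averaging this over $k\in[n]$ and summing over $i,j$ yields $\sum_{i,j}d_X(x_i,x_j)^2 \le 4\sum_{i,k}d_X(x_i,y_k)^2$, and symmetrically for the $y$-block. Substituting these estimates together with the cross block bound from $\gamma_+(A,d_X^2)$ into the LHS of the $B$-Poincaré inequality, I expect the three contributions to combine as $\tfrac{1}{2}+1+1=\tfrac{5}{2}$ (in units of $\gamma_+(A,d_X^2)\,/n \cdot \sum A_{ij} d_X(x_i,y_j)^2$), which gives the desired constant.

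No step is really hard; the one place to be careful is bookkeeping the factor from the $\tfrac{1}{(2n)^2}$ versus $\tfrac{1}{n^2}$ normalization and from the doubling $2\sum A_{ij} d_X(x_i,y_j)^2$ in the Dirichlet form, so that the final arithmetic lands on exactly $5/2$ and not a slightly worse constant. Other than this, the argument is a clean two-line triangle-inequality computation, and it goes through for an arbitrary metric space $(X,d_X)$ with no further hypothesis on $X$.
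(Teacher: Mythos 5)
Your proposal is correct and follows essentially the same route as the paper: the easy direction discards the same-side blocks, and the harder direction controls each same-side block by $4$ times the cross block via the averaged quadratic triangle inequality through a point $y_k$, giving exactly the $1+1+\tfrac12=\tfrac52$ accounting the paper uses. The arithmetic you outline checks out and lands on the constant $5/2$ as claimed.
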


\begin{claim} \label{cl:commute tenzor}
For every doubly stochastic $n\times n$ matrix $A$, and any metric space $X$,
\begin{equation}\label{eq:commute tenzor}
\gamma\left( \left( \begin{smallmatrix}
  0 & \A_m(A) \\
   \A_m(A) & 0
   \end{smallmatrix} \right )
,d_X^2\right)\le 9 \gamma\left( \A_m \left (\begin{smallmatrix}
  0 & A \\
   A & 0
   \end{smallmatrix} \right),d_X^2\right).
\end{equation}
\end{claim}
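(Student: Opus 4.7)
The plan is to upgrade the Poincar\'e inequality for $\A_m(B)$, where $B:=\left(\begin{smallmatrix}0&A\\A&0\end{smallmatrix}\right)$, to one for $M_1:=\left(\begin{smallmatrix}0&\A_m(A)\\ \A_m(A)&0\end{smallmatrix}\right)$ by proving the pointwise comparison
\[
\sum_{I,J\in[2n]}\bigl(\A_m(B)\bigr)_{IJ}\,d_X(f(I),f(J))^2\ \le\ 9\sum_{I,J\in[2n]}(M_1)_{IJ}\,d_X(f(I),f(J))^2
\]
for every $f\colon[2n]\to X$, from which the claim follows immediately by plugging into the definition of $\gamma(\A_m(B),d_X^2)$. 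The starting point is the block structure of $B^t$, which is off-diagonal for odd $t$ and block-diagonal for even $t$. Setting $E_m:=\frac{1}{m}\sum_{0\le t\le m-1,\, t\text{ even}}A^t$ and $O_m:=\frac{1}{m}\sum_{0\le t\le m-1,\, t\text{ odd}}A^t$, this yields $\A_m(B)=\left(\begin{smallmatrix}E_m&O_m\\O_m&E_m\end{smallmatrix}\right)$ together with the crucial identity $E_m+O_m=\A_m(A)$.

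Writing $f=(x,y)$ with $x,y\colon[n]\to X$, and abbreviating $S(M,u,v):=\sum_{i,j}M_{ij}d_X(u_i,v_j)^2$, the left-hand side of the target inequality splits as $\sum_{i,j}E_m(i,j)\,d_X(x_i,x_j)^2+\sum_{i,j}E_m(i,j)\,d_X(y_i,y_j)^2+2\,S(O_m,x,y)$, while the right-hand side equals $9\bigl(S(\A_m(A),x,y)+S(\A_m(A),y,x)\bigr)$. The cross term is controlled immediately since $O_m\le\A_m(A)$ entrywise. The two diagonal-block terms couple $x_i$ to $x_j$ (respectively $y_i$ to $y_j$), whereas the target right-hand side only features distances of the form $d_X(x_i,y_j)$; the plan is to bridge this gap by routing through the opposite component via the triangle inequality. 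Concretely, for each even exponent $t=2s\ge 2$ I factor $(A^{2s})_{ij}=\sum_k(A^s)_{ik}(A^s)_{kj}$, pick $y_k$ as intermediate, and apply $(a+b)^2\le 2(a^2+b^2)$; summing over $i,j$ collapses the two extra marginals using the double-stochasticity of $A^s$ (both row and column sums equal one), yielding
\[
\sum_{i,j}(A^{2s})_{ij}\,d_X(x_i,x_j)^2\ \le\ 2\bigl(S(A^s,x,y)+S(A^s,y,x)\bigr).
\]

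Averaging over the even values of $t$ in $[0,m-1]$ (the $t=0$ term contributes zero) and enlarging the range of $s$ to $0\le s\le m-1$ gives $\sum_{i,j}E_m(i,j)d_X(x_i,x_j)^2\le 2\bigl(S(\A_m(A),x,y)+S(\A_m(A),y,x)\bigr)$, and the bound for the $y$-term is identical after routing through $x_k$. Adding the three contributions bounds the left-hand side by $6$ times the right-hand side, which is stronger than the claimed factor of~$9$. The main obstacle is precisely the two diagonal-block terms: there is no direct pathway to such couplings inside $M_1$, and a careless use of the triangle inequality could introduce a factor proportional to $m$ and destroy the estimate. The key insight that rescues the argument is the symmetric factorization $A^{2s}=A^s\cdot A^s$ which, combined with the double-stochasticity of all powers of $A$, replaces an even-power term by its $s$-power counterpart at the cost of only a factor of $2$, thereby preserving the structure that lets the Ces\`aro average telescope back into $\A_m(A)$.
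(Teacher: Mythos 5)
Your proof is correct, and it follows the same overall strategy as the paper's: an entrywise comparison of edge energies, obtained by factoring the even powers of $B=\bigl(\begin{smallmatrix}0&A\\A&0\end{smallmatrix}\bigr)$ (which are block-diagonal and hence invisible to $\bigl(\begin{smallmatrix}0&\A_m(A)\\ \A_m(A)&0\end{smallmatrix}\bigr)$) as products of two lower powers and applying $d_X(u,w)^2\le 2d_X(u,v)^2+2d_X(v,w)^2$ through an intermediate point. The decomposition you choose is different, though, and it buys you cleaner bookkeeping. The paper writes each even $t\le m$ as $t=o_1(t)+o_2(t)$ with both summands \emph{odd}, so that $B^{o_1(t)}$ and $B^{o_2(t)}$ are themselves off-diagonal, and then must count multiplicities: each odd number occurs at most four times in the resulting multiset, which is where the factor $1+2\cdot 4=9$ comes from. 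You instead use the symmetric split $A^{2s}=A^sA^s$ and force the intermediate point into the opposite block by hand (routing $x_i\to y_k\to x_j$), so the parity of $s$ is irrelevant; double stochasticity of $A^s$ collapses the extra marginal, each $s$ occurs only once in the enlarged Ces\`aro range $\{0,\dots,m-1\}$, and you obtain the constant $5$ rather than $9$ (your count of $6$ slightly overestimates the cross term, which contributes $S(O_m,x,y)+S(O_m,y,x)\le S(\A_m(A),x,y)+S(\A_m(A),y,x)$, i.e.\ one unit of $S+S'$ rather than two). One cosmetic remark: for a doubly stochastic but non-symmetric $A$ the off-diagonal part of the left-hand side is $S(O_m,x,y)+S(O_m,y,x)$ rather than $2S(O_m,x,y)$; this changes nothing, since each summand is dominated termwise by the corresponding $\A_m(A)$-energy.
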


By combining~\eqref{eq:decay for gamma} with~\eqref{eq:2n}
and~\eqref{eq:commute tenzor} we deduce~\eqref{eq:decay for gamma_+}
with $C(X)\le 540C^2$ whenever $(X,d_X)$ satisfies~\eqref{eq:intro
def cotype}.

\medskip

It remains to explain why any super-reflexive normed space
$(X,\|\cdot\|_X)$ has metric Markov cotype $\frac{2}{\e}$ with
constant $C$ and exponent $2$ for some $\e,C\in (0,\infty)$. An
important theorem of Pisier~\cite{Pisier-martingales} says that $X$
has an equivalent norm whose modulus of uniform convexity has power
type $p$ for some $p\in [2,\infty)$. From this  fact, together with
results from~\cite{Fiegel76,BCL}, we deduce the following variant of
Pisier's martingale inequality~\cite{Pisier-martingales}, which is
proved in Section~\ref{sec:proof-pisier-variant}:
\begin{equation}\label{eq:pisier variant}
\sum_{k=1}^m \E \left[\|M_k-M_{k-1}\|_X^2\right]\lesssim
m^{1-\frac{2}{p}}\E \left[\|M_m-M_{0}\|_X^2\right],
\end{equation}
for every square integrable martingale $\{M_k\}_{k=0}^m\subseteq X$,
where the implied constant in~\eqref{eq:pisier variant} depends on
$X$.

In order to deduce~\eqref{eq:intro def cotype} from~\eqref{eq:pisier variant} fix $x_1,\ldots,x_n\in X$ and define $f:\{1,\ldots,n\}\to X$ by $f(i)=x_i$.
For every $\ell\in \{1,\ldots,n\}$ let
$\bigl\{Z_t^{(\ell)}\bigr\}_{t=0}^m$ be the Markov chain on
$\{1,\ldots,n\}$ which starts at $\ell$ and has transition matrix $A$. In other
words $Z_0^{(\ell)}=\ell$ with probability $1$ and for $t\in
\{1,\ldots,m\},\ i,j\in \{1,\ldots,n\}$ we have
$\Pr\bigl[\bigl.Z_t^{(\ell)}=j\bigr|Z_{t-1}^{(\ell)}=i\bigr]=a_{ij}$.
For $t\in \{0,\ldots,m\}$ define $f_t:\{1,\ldots,n\}\to X$ by
$f_t(i)=\sum_{j=1}^n(A^{m-t})_{ij}f(j)$. A simple computation shows that if we set
$M_t^{(\ell)}=f_t\bigl(Z_t^{(\ell)}\bigr)$ then
$\bigl\{M_t^{(\ell)}\bigr\}_{t=0}^m$ is a martingale with respect
to the filtration induced by $\bigl\{Z_t^{(\ell)}\bigr\}_{t=0}^m$. We can therefore apply~\eqref{eq:pisier variant} to this martingale, and then average over  $\ell\in
\{1,\ldots,n\}$, to get the inequality:
\begin{equation}\label{eq:in coordinates-intro}
\sum_{t=1}^m
\sum_{i=1}^n\sum_{j=1}^n
a_{ij}\left\|f_t(i)-f_{t-1}(j)\right\|_X^2
\ifsodaelse{\\}{}
\lesssim m^{1-\frac{2}{p}}\sum_{i=1}^n\sum_{j=1}^n
(A^m)_{ij}\left\|x_i-\sum_{r=1}^n(A^m)_{jr}x_r\right\|_X^2.
\end{equation}
If we choose $y_i=\frac{1}{m}\sum_{s=0}^{m-1}
(A^s)_{ij}x_j$ for $i\in \{1\ldots,n\}$,
then we can use convexity to deduce the bound:
\begin{align}
\nonumber\sum_{t=1}^m
\ifsodaelse{&}{}
\sum_{i=1}^n\sum_{j=1}^n
\ifsodaelse{}{&}
a_{ij}\left\|f_t(i)-f_{t-1}(j)\right\|_X^2
\ge m \nonumber
\sum_{i=1}^n\sum_{j=1}^n
a_{ij}\left\|\frac{1}{m}\sum_{t=1}^m\left(f_t(i)-f_{t-1}(j)\right)\right\|_X^2  \displaybreak[2]  \\ \nonumber
&= m \sum_{i=1}^n\sum_{j=1}^n
a_{ij}\left\|y_i-y_j-\frac{1}{m}\sum_{r=1}^n(A^m)_{jr}(x_j-x_r)\right\|_X^2\\
\label{eq:get factor m-intro}
&\ge \frac{m}{2}\sum_{i=1}^n\sum_{j=1}^n
a_{ij}\left\|y_i-y_j\right\|_X^2
\ifsodaelse{\\ \nonumber & \qquad\qquad}{}
-\frac{1}{m}\sum_{j=1}^n\sum_{r=1}^n(A^m)_{jr}
\left\|x_j-x_r\right\|_X^2.
\end{align}
At the same time we can bound the right-hand side of~\eqref{eq:in
coordinates-intro} as follows:
\begin{align}
\nonumber
\sum_{i=1}^n\sum_{j=1}^n &
(A^m)_{ij}\left\|x_i-\sum_{r=1}^n(A^m)_{jr}x_r\right\|_X^2
 \nonumber \le
\sum_{i=1}^n\sum_{j=1}^n\sum_{r=1}^n
(A^m)_{ij}(A^m)_{jr}\left\|x_i-x_r\right\|_X^2
\\ \nonumber  & \le
2\sum_{i=1}^n\sum_{j=1}^n\sum_{r=1}^n
\ifsodaelse{\\ \nonumber & \qquad}{}
(A^m)_{ij}(A^m)_{jr}\left(\|x_i-x_j\|_X^2+\|x_j-x_r\|_X^2\right)
\\ &= 4\sum_{i=1}^n\sum_{j=1}^n(A^m)_{ij}\|x_i-x_j\|_X^2. \label{eq:LHS factor m-intro}
\end{align}
We note that:
\begin{align*}
\sum_{i=1}^n\sum_{j=1}^n &(A^m)_{ij}\|x_i-x_j\|_X^2
 =
\sum_{i=1}^n\sum_{j=1}^n\left(\frac{1}{m}\sum_{t=0}^{m-1}A^tA^{m-t}\right)_{ij}\|x_i-x_j\|_X^2
\\&\le
\frac{2}{m}\sum_{i=1}^n\sum_{j=1}^n\sum_{r=1}^n\sum_{t=0}^{m-1}
\ifsodaelse{\\ & \qquad}{}
(A^t)_{ir}(A^{m-t})_{rj}
\left(\|x_i-x_r\|_X^2+\|x_r-x_j\|_X^2\right) \displaybreak[2] \\
&= 2 \sum_{i=1}^n\sum_{j=1}^n
\left(\frac{1}{m}\sum_{t=0}^{m-1}A^t\right)_{ij}\|x_i-x_j\|_X^2
\ifsodaelse{\\ & \qquad}{}
+2
\sum_{i=1}^n\sum_{j=1}^n
\left(\frac{1}{m}\sum_{t=1}^{m}A^t\right)_{ij}\|x_i-x_j\|_X^2\\
&\le 4 \sum_{i=1}^n\sum_{j=1}^n
\left(\frac{1}{m}\sum_{t=0}^{m-1}A^t\right)_{ij}\|x_i-x_j\|_X^2
\ifsodaelse{\\ & \qquad}{}
+\frac{2}{m}\sum_{i=1}^n\sum_{j=1}^n(A^m)_{ij}\|x_i-x_j\|_X^2,
\end{align*}
which implies (by separating the cases $m\ge 4$ and $m< 4$) the bound:
\begin{equation}\label{eq:bound by average-intro}
\sum_{i=1}^n\sum_{j=1}^n(A^m)_{ij}\|x_i-x_j\|_X^2
\ifsodaelse{\\}{}
\lesssim\sum_{i=1}^n\sum_{j=1}^n\left(\frac{1}{m}\sum_{t=0}^{m-1}A^t\right)_{ij}\|x_i-x_j\|_X^2.
\end{equation}

 Substituting~\eqref{eq:get factor m-intro}
and~\eqref{eq:LHS factor m-intro} into~\eqref{eq:in coordinates-intro} yields
the bound:
\begin{multline}\label{one of the terms in cotype-intro}
m^{2/p}\sum_{i=1}^n\sum_{j=1}^n a_{ij}\left\|y_i-y_j\right\|_X^2\lesssim \sum_{i=1}^n\sum_{j=1}^n(A^m)_{ij}\|x_i-x_j\|_X^2
\\ \stackrel{\eqref{eq:bound
by average-intro}}{\lesssim}
\sum_{i=1}^n\sum_{j=1}^n\left(\frac{1}{m}\sum_{t=0}^{m-1}A^t\right)_{ij}\|x_i-x_j\|_X^2.
\end{multline}
At the same time,
\begin{multline}\label{eq:displacement-intro}
\sum_{i=1}^n
\|x_i-y_i\|_X^q=\sum_{i=1}^n\sum_{j=1}^n\left\|\frac{1}{m}\sum_{t=0}^{m-1}(A^t)_{ij}(x_i-x_j)\right\|_X^q
\\ \le
\sum_{i=1}^n\sum_{j=1}^n\left(\frac{1}{m}\sum_{t=0}^{m-1}A^t\right)_{ij}\|x_i-x_j\|_X^q.
\end{multline}
Inequalities~\eqref{one of the terms in cotype-intro}
and~\eqref{eq:displacement-intro} imply~\eqref{eq:intro def cotype} with $\e=\frac{2}{p}$, as required.

\subsection{Constructing the base graph: the heat semigroup on the tail space}\label{sec:base}

In this section we outline the proof of Lemma~\ref{lem:base-intro}
--- the existence of ``sufficiently good" base graphs for the class of $K$-convex
spaces (which contain the class of super-reflexive spaces as a
subclass). Some of the details are deferred to
Section~\ref{sec:base-details}.

We begin with few definitions. Let $\F_2=\{0,1\}$ be the field of two elements, and $(X,\|\cdot\|_X)$ a given normed space. $L_p(X)$ denotes the normed space of functions $f:V\to X$, where $V$ is an (implicit) finite set (in this paper, $V$ is the vertex set of some graph, and in this section it is $V=\F_2^n$) and the norm is \(\|f\|_{L_p(X)}= \bigl(\frac1{|V|} \sum_{x\in V} \|f(x)\|_X^p\bigr )^{1/p}. \)
Given a $V\times V$ symmetric stochastic matrix $A$, we view $A$ as a linear operator over $L_p(X)$: For $f\in L_p(X)$ we define $Af$ as
$(Af)(i)=\sum_j A_{ij}f(j)$ --- essentially we identify $A$ with $A\otimes I_X$.
For $f\in L_2(X)$,
let $\widehat f(A)=\E \left[f(x)W_A(x)\right]$ be the Fourier coefficient corresponding to the Walsh function $W_A(x)=(-1)^{\sum_{i\in A}x_i}$, where the expectation is with respect to the uniform probability measure on $\F_2^n$. Let $\partial_if(x)=\frac{f(x+e_i)-f(x)}{2}$ (where $\{e_i\}_{i=1}^n$ is the standard basis) and $\Delta f=\sum_{i=1}^n\partial_if$.

The starting point of our construction is the example of Khot and
Naor~\cite{KN06} of quotients of the hypercube by good codes, as
examples of metric spaces for which Bourgain's embedding
theorem~\cite{Bourgain-embed} is asymptotically sharp. It would be
instructive to first recall the argument from~\cite{KN06}. Thinking
of the cube $\F_2^n$ as an $n$-dimensional vector space over $\F_2$,
let $C\subseteq \F_2^n$ be a linear subspace of dimension at least
$\frac{n}{10}$ such that the minimum number of $1$'s in any non-zero
element of $C$ is $m\ge \frac{n}{10}$ (a good code). Given a
function $f:\F_2^n/C^\perp\to X$ we will think of $f$ as a function
defined on all of $\F_2^n$ which is constant on cosets of $C^\perp$.
The following claim is a simple observation of~\cite{KN06}:
\begin{claim} \label{cl:KN-cosets}
Let $C\subseteq \mathbb F_2^n$ be a linear code with minimum
distance of $m$, and fix $f:\mathbb F_2^n \to X$ which is constant
on cosets of $C^\perp$. Then $\widehat f(A)=0$ for all nonempty
$A\subseteq \{1,\ldots,n\}$ of cardinality less than $m$.
\end{claim}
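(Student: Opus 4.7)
The plan is to exploit the $C^\perp$-invariance of $f$ directly in the definition of the Fourier coefficient. Identify each subset $A\subseteq\{1,\ldots,n\}$ with its indicator vector in $\mathbb{F}_2^n$, so that $W_A(x)=(-1)^{\langle A,x\rangle}$ where $\langle\cdot,\cdot\rangle$ is the standard $\mathbb{F}_2$-bilinear form. The Walsh functions are characters: $W_A(x+y)=W_A(x)W_A(y)$.

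First I would use the hypothesis that $f$ is constant on cosets of $C^\perp$: for every $y\in C^\perp$ and every $x\in \mathbb{F}_2^n$, $f(x+y)=f(x)$. Substituting $x\mapsto x+y$ in the definition of $\widehat{f}(A)$ and using translation invariance of the uniform measure on $\mathbb{F}_2^n$ gives
\[
\widehat{f}(A)=\E\bigl[f(x+y)W_A(x+y)\bigr]=W_A(y)\,\E\bigl[f(x)W_A(x)\bigr]=W_A(y)\widehat{f}(A).
\]
So for every $y\in C^\perp$, either $\widehat{f}(A)=0$ or $W_A(y)=1$. Consequently, if there exists some $y\in C^\perp$ with $\langle A,y\rangle=1$, then $\widehat{f}(A)=0$. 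Equivalently, $\widehat{f}(A)$ can only be nonzero when $\langle A,y\rangle=0$ for all $y\in C^\perp$, i.e., when $A\in (C^\perp)^\perp=C$.

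Finally I would invoke the minimum distance hypothesis. Since $C$ has minimum distance $m$, every nonzero codeword has Hamming weight at least $m$. Thus if $A$ is nonempty (so $A\neq 0$ as a vector) with $|A|<m$, then $A\notin C$, and by the previous paragraph $\widehat{f}(A)=0$. There is no real obstacle here — the argument is a one-line character computation combined with the definition of the dual code — so I expect the proof to be very short.
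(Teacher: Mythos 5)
Your proof is correct and is exactly the standard argument (the paper itself omits the proof, attributing the claim to Khot--Naor): translation-invariance of the uniform measure plus the character property of $W_A$ forces $\widehat f(A)=W_A(y)\widehat f(A)$ for $y\in C^\perp$, so nonvanishing coefficients are supported on $(C^\perp)^\perp=C$, and the minimum-distance hypothesis rules out nonzero codewords of weight below $m$. No gaps.
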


 Since $\Delta W_A=|A|W_A$ and the (nonempty) Fourier spectrum of $f$ is supported on sets of size at least $m$, it follows from Parseval's identity that if $X$ is a Hilbert space then $\|\Delta f\|_{L_2(X)}\ge m\|f-\E f\|_{L_2(X)}$. But $\|\Delta f\|_{L_2(X)}=\left\|\sum_{i=1}^n\partial_i f\right\|_{L_2(X)}\le \sum_{i=1}^n\|\partial_i f\|_{L_2(X)}$ and $\|f-\E f\|_{L_2(X)}^2=\frac12 \E_{x,y}\left[\|f(x)-f(y)\|_2^2\right]$. An application of Cauchy-Schwarz and the fact that $m\asymp n$ now implies that $\E_{x,y}\left[\|f(x)-f(y)\|_2^2\right]\lesssim \frac{1}{n}\sum_{i=1}^n\|\partial_i f\|_{L_2(X)}^2$. A moment of thought reveals that this is the desired Poincar\'e inequality (in the case of Hilbert space), where the graph in question is on the vertex set $\F_2^n/C^\perp$, and each coset $x+C^\perp$ is joined by an edge to the cosets $\{x+e_i+C^\perp\}_{i=1}^n$. In fact, this graph has degree logarithmic in the number of vertices, which is much better than the assertion in Lemma~\ref{lem:base-intro}.

In order to make this idea work for a non-Hilbertian normed space $X$ it would be desirable to prove that if $f:\F_2^n\to X$ is in the $m$-tail space, i.e., the subspace of $L_2(X)$, denoted $L_2^{\ge m}(X)$, consisting of functions $f:\F_2^n\to X$ with $\widehat f(A)=0$ whenever $|A|<m$, then $\|\Delta f\|_{L_2(X)}\gtrsim m\|f\|_{L_2(X)}$. This fact is false without some additional assumption on the geometry of $X$ --- in the full version of this extended abstract we observe that it fails when $X=L_1$.
Recall from Section~\ref{sec:coarse} that a Banach space $X$ is called $K$-convex if there exists $\e_0>0$ and $n_0\in \N$ such that any embedding of $\ell_1^{n_0}$ into $X$ incurs distortion at least $1+\e_0$.
Here we will show that if $X$ is $K$-convex then there exists $\delta=\delta(X)>0$ such that if $f\in L_2^{\ge m}(X)$ then
\begin{equation} \label{eq:laplcian-K-convex}
\|\Delta f\|_{L_2(X)}\gtrsim m^\delta\|f\|_{L_2(X)}.
\end{equation}

 While this bound is insufficient for our purpose, from the the proof of~\eqref{eq:laplcian-K-convex} we can extract a graph which is more complicated than $\F_2^n/C^\perp$ (but is of independent interest), and whose degree bound to what is claimed in Lemma~\ref{lem:base-intro}, which will serve as the base graph(s) for $K$-convex spaces.

The key idea is to consider the heat semigroup $\{T_t\}_{t>0}$ of operators given by
\[ (T_t f)(x) = \sum_{A\subseteq [n]} e^{-t|A|} \hat f(A) W_A(x) .\]
and make use of the following lemma, which is the heart of our argument.

\begin{lemma} \label{lem:bounding-beckner}
If $X$ is a $K$-convex Banach Space, and $p>1$, then there exist $\alpha\ge 1$, $a>0$,
such that for every $n,m\in \mathbb N$, $m \le n$, every $t\ge 0$, and every $f:\mathbb F_2^n \to X$, $f\in L_p^{\ge m}(X)$,
\begin{equation}\label{eq:bounding-beckner-b}
 \|T_t f\|_{L_p(X)} \lesssim  e^{-am \min\{t,t^{\alpha}\}} \| f\|_{L_p(X)}  ,
 \end{equation}
\end{lemma}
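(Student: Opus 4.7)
The bound splits naturally at $t=1$ and I would argue the two regimes by different methods, glued together at the threshold.

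For $t\ge 1$ I reduce to a single-step estimate. The crucial observation is that $T_s$ preserves the Fourier support of $f$ (it simply rescales each $\widehat f(A)$ by $e^{-s|A|}$), so $T_s\bigl(L_p^{\ge m}(X)\bigr)\subseteq L_p^{\ge m}(X)$ for every $s\ge 0$. Moreover, $T_s$ is an averaging operator against a product-Bernoulli kernel on $\mathbb F_2^n$ and therefore a contraction on $L_p(X)$. Hence it suffices to establish the single-step bound
\[
\|T_1 g\|_{L_p(X)}\ \le\ e^{-am}\,\|g\|_{L_p(X)} \qquad \text{for all } g\in L_p^{\ge m}(X),
\]
since iterating $T_t=T_1^{\lfloor t\rfloor}\circ T_{\{t\}}$ yields $\|T_t f\|_{L_p(X)}\le e^{-am\lfloor t\rfloor}\|f\|_{L_p(X)}\lesssim e^{-amt}\|f\|_{L_p(X)}$, matching the regime $\min\{t,t^\alpha\}=t$.

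For $t\in(0,1]$, where the target is $\|T_tf\|_{L_p(X)}\lesssim e^{-amt^\alpha}\|f\|_{L_p(X)}$, the key input is Pisier's $K$-convexity theorem: for $K$-convex $X$ and $p>1$, the heat semigroup $\{T_z\}$ extends to an analytic family of uniformly bounded operators on $L_p(X)$ in a sector $\{z\in\mathbb C:|\arg z|<\pi/2+\sigma\}$ for some $\sigma=\sigma(X,p)>0$. Analyticity in a sector strictly wider than the right half-plane is precisely what distinguishes $K$-convex spaces from spaces like $L_1$, where the argument fails. I would apply this to the analytic $L_p(X)$-valued function $z\mapsto e^{mz}T_zf$ on the sector, via a Phragm\'en--Lindel\"of / three-lines estimate: along rays through the origin in the Pisier sector with $\Re z\le 0$, the scalar Fourier multipliers $e^{mz-z|A|}$ have modulus at most $1$ whenever $|A|\ge m$, so the function is bounded there by $C(X,p)\|f\|_{L_p(X)}$; interpolating with the trivial bound on the imaginary axis pins down the value at $z=t$ real, with an exponent $\alpha\ge 1$ controlled by the sector angle $\sigma$ (intuitively $\alpha\approx\pi/(2\sigma)$, degenerating to $\alpha=1$ in the Hilbert limit $\sigma\to\pi/2$).

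The main obstacle is extracting the $m$-factor from Pisier's analyticity theorem, which by itself provides only $m$-uniform sector bounds. The $m$-dependence must be imported entirely from the Fourier hypothesis $f\in L_p^{\ge m}(X)$, which shifts the spectrum of the generator $-\Delta$ to $\{k\ge m\}$. An alternative (and possibly cleaner) route, should the three-lines computation become technically awkward, is to express $T_t|_{L_p^{\ge m}(X)}$ via a Dunford functional calculus $\frac{1}{2\pi i}\oint_\Gamma e^{-t\lambda}(\lambda-(-\Delta))^{-1}\,d\lambda$ along a contour $\Gamma$ encircling $[m,\infty)$ inside Pisier's sector; optimizing $\Gamma$ in $t$ then directly produces a bound of the form $e^{-amt^\alpha}$. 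In either implementation the critical sanity check is that $a$ and $\alpha$ depend only on $X$ and $p$, not on $m$ or on the cube dimension $n$, and that the scalar case $X=\mathbb R$, $p=2$ recovers the trivial Parseval bound with $\alpha=1$.
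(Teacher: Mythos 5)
Your high-level architecture --- split at a threshold in $t$, handle large $t$ by a direct tail-space estimate and small $t$ by analytic extension of the semigroup plus a Phragm\'en--Lindel\"of interpolation --- is the same as the paper's, but two of your central claims are false, and the mechanism that actually produces the factor $e^{-am}$ is missing. Pisier's $K$-convexity theorem (Theorem~\ref{thm:Pis K-conv}) gives uniform boundedness of $T_z$ on $L_p(X)$ only in a sector $\{|\arg z|<\phi\}$ about the \emph{positive real axis}, with $\phi\in(0,\pi/2)$ possibly small --- never in a sector containing points with $\Re z\le 0$. Even for $X$ a Hilbert space one has $\|T_z\|_{L_2\to L_2}=\max_{0\le k\le n} e^{-k\Re z}$, which blows up with $n$ as soon as $\Re z<0$, so your sector $\{|\arg z|<\pi/2+\sigma\}$ does not exist for any $X$. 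Relatedly, your modulus computation is backwards: for $\Re z\le 0$ and $|A|\ge m$ one has $\Re\bigl(mz-z|A|\bigr)=(m-|A|)\,\Re z\ge 0$, so $|e^{mz-z|A|}|\ge 1$ and grows with $|A|$. Thus the three-lines argument you describe has no portion of the boundary on which to place the good bound.

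Second, and this is the real content of the lemma, nothing in your sketch converts the hypothesis $f\in L_p^{\ge m}(X)$ into decay in $m$ anywhere in the region where $T_z$ is actually bounded; you flag this as ``the main obstacle'' but do not resolve it. The paper resolves it by first deriving from Pisier's theorem the bound $\|R_k\|_{L_p(X)\to L_p(X)}\le Me^{ak}$ on the level-$k$ Rademacher projections, with $a=\pi/\tan\phi$, via the identity $e^{-ka}R_k=\frac{1}{2\pi}\int_{-\pi}^{\pi}e^{ikt}T_{a+it}\,dt$ over a vertical segment lying inside the sector. Summing over $k\ge m$ yields $\|T_z\|_{L_p^{\ge m}(X)\to L_p^{\ge m}(X)}\lesssim e^{-m(\Re z-a)}$ for $\Re z\ge 2a$ (Corollary~\ref{coro:far away}): this is the sole source of $m$-decay, it settles the large-$t$ regime outright (note also that your iteration of a single-step bound $\|T_1g\|\le Ce^{-am}\|g\|$ does not yield decay when $Ce^{-am}>1$, i.e.\ for $m\lesssim\log C$), and it supplies the boundary data for the small-$t$ interpolation. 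For $t\in(0,2a)$ the paper then works inside $V=\{|z|\le r,\ |\arg z|\le\phi\}$, writing $T_t=\int_{\partial V}\Psi(z)T_z\,d\mu_t(z)$ with $\mu_t$ the harmonic measure at $t$ and $\Psi$ the explicit analytic function of Lemma~\ref{lem:strip}, equal to $1$ at $t$, small on the near-origin boundary and controlled on the far arc; the exponent $\alpha=\pi/(2\phi)$ comes from $\mu_t(V_1)\asymp(t/r)^{\pi/(2\phi)}$ (Lemma~\ref{lem:theta bound}). Your Dunford-calculus alternative would face the same two obstructions: the resolvent of $\Delta$ is controlled only in the reflection of Pisier's narrow sector, and the $m$-dependence of the contour estimates must still be imported from something like the $R_k$ bounds.
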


The proof of Lemma~\ref{lem:bounding-beckner} is based on deep
results of Pisier on holomorphic extensions of the heat semigroup on
$K$-convex spaces~\cite{Pisier-K-convex}, and a quantitative version
of the proof of a recent factorization theorem of
Pisier~\cite{pisier-2007}. The proof is contained in
Section~\ref{sec:proof-of-bounding-beckner}.

Note that when $m\ge 1$, $\Delta$ is invertible on $L_p^{\ge m}(X)$,
and moreover we have the identity $\Delta^{-1}=\int_0^\infty T_t
dt$, since $\Delta^{-1}W_A=|A|^{-1} W_A$, and
\begin{equation*}
 \left(\int_0^\infty T_tdt \right ) W_A= \int_0^\infty T_t W_A dt
\ifsodaelse{\\}{}
= \int_0^\infty e^{-t|A|} W_A d_t= |A|^{-1} W_A.
\end{equation*}
Hence by integrating~\eqref{eq:bounding-beckner-b} over $t$, it immediately implies~\eqref{eq:laplcian-K-convex}.

Inequality~\eqref{eq:bounding-beckner-b} will serve as the basis for
our base graph. This is made transparent by the observation that the
heat semigroup can be viewed as a noise operator with noise rate
$\frac{1-e^{-t}}{2}$.  The following standard claim
\ifsodaelse{}{(whose proof is recalled in Section~\ref{sec:base-details})} formalizes this
statement.
\begin{claim} \label{cl:real-beckner}
For every $f:\mathbb F_2^n \to X$,  $x\in\mathbb F_2^n$, and $t\in (0,\infty)$,
\begin{equation} \label{eq:real-beckner-b}
(T_t f)(x) = \sum_{y\in\mathbb F_2^n}  \left( \frac{1-e^{-t}}2 \right)^{\|x-y\|_1}
\ifsodaelse{\\}{}
\cdot \left( \frac{1+e^{-t}}2 \right)^{n-\|x-y\|_1}
f(y).
\end{equation}
\end{claim}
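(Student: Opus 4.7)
The plan is to unfold the spectral definition of $T_t$ and recognize the resulting kernel as a product over the $n$ coordinates. First, I substitute the formula $\widehat{f}(A) = \E_y[f(y)W_A(y)]$ into $(T_t f)(x) = \sum_{A\subseteq[n]} e^{-t|A|}\widehat{f}(A) W_A(x)$ and interchange the (finite) sum over $A$ with the uniform expectation over $y\in\mathbb F_2^n$, obtaining
\[
(T_t f)(x) \;=\; \E_y\!\left[\, f(y)\sum_{A\subseteq[n]} e^{-t|A|}\,W_A(x)\,W_A(y)\right].
\]

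Next I use that Walsh characters are multiplicative: $W_A(x)W_A(y) = W_A(x\oplus y)$, where $\oplus$ denotes coordinatewise addition modulo $2$. Writing $W_A(z) = \prod_{i\in A}(-1)^{z_i}$, the sum over subsets decouples coordinate by coordinate as a product of two-term sums, giving
\[
\sum_{A\subseteq[n]} e^{-t|A|}\,W_A(z) \;=\; \prod_{i=1}^n\bigl(1+e^{-t}(-1)^{z_i}\bigr) \;=\; (1+e^{-t})^{n-\|z\|_1}(1-e^{-t})^{\|z\|_1}.
\]

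Finally, specializing to $z = x\oplus y$, noting that $\|x\oplus y\|_1 = \|x-y\|_1$ in $\mathbb F_2^n$, and writing $\E_y[\,\cdot\,] = 2^{-n}\sum_y(\,\cdot\,)$, I split the normalization as $2^{-n} = 2^{-\|x-y\|_1}\cdot 2^{-(n-\|x-y\|_1)}$ and absorb the two factors into the two bases; this produces exactly the right-hand side of~\eqref{eq:real-beckner-b}. No serious obstacle is anticipated: the only substantive step is the coordinatewise factorization of $\sum_A e^{-t|A|}W_A(z)$, which is the standard tensorization identifying $T_t$ with the Bonami--Beckner noise operator at noise rate $(1-e^{-t})/2$. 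As a sanity check one verifies $\sum_y\bigl(\tfrac{1-e^{-t}}{2}\bigr)^{\|x-y\|_1}\bigl(\tfrac{1+e^{-t}}{2}\bigr)^{n-\|x-y\|_1}=1$ by the binomial theorem, consistent with $T_t\mathbf 1 = \mathbf 1$.
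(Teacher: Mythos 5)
Your proof is correct. It differs from the paper's in direction rather than in substance: you expand the spectral definition $(T_tf)(x)=\sum_A e^{-t|A|}\widehat f(A)W_A(x)$, swap the (finite) sums, and compute the kernel $\sum_A e^{-t|A|}W_A(x)W_A(y)$ in closed form by tensorizing over coordinates, whereas the paper instead verifies the claimed kernel formula on each Walsh character --- it plugs $f=W_A$ into the right-hand side of~\eqref{eq:real-beckner-b}, splits the sum over $y$ into the coordinates in $A$ and in $A^c$, and applies the binomial theorem to recover $e^{-t|A|}W_A(x)$; linearity then finishes the argument. Both proofs rest on the same two facts (multiplicativity of Walsh functions and the per-coordinate binomial identity), so neither is deeper than the other; your version has the mild advantage of \emph{deriving} the kernel rather than merely checking it, which makes the identification of $T_t$ with the noise operator at rate $(1-e^{-t})/2$ immediate, while the paper's basis verification avoids the (harmless) interchange of summations. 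All the individual steps you use --- $W_A(x)W_A(y)=W_A(x\oplus y)$, the factorization $\sum_A e^{-t|A|}W_A(z)=\prod_i(1+e^{-t}(-1)^{z_i})$, and the absorption of the normalization $2^{-n}$ --- are valid, and your closing normalization check is consistent with $T_t$ being an average of a probability measure.
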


Hence $T_t$ induces a natural weighted graph structure on $\F_2^n$: the weight of the edge joining $x,y\in \F_2^n$ is $\left(\frac{1-e^{-t}}{2}\right)^{\|x-y\|_1}\left(\frac{1+e^{-t}}{2}\right)^{n-\|x-y\|_1}$. Our base graph is ``morally" the quotient of this weighted cube by $C^\perp$. When $t\approx n^{-1/\alpha}$, the factor in the right hand side of~\eqref{eq:bounding-beckner-b} becomes a constant, and this implies a Poincar\'e inequality because of the following general estimate (proved in Section~\ref{sec:base-details}).

\begin{proposition} \label{prop:linear=>nonlin}
Fix a Banach space $X$, a symmetric stochastic $N\times N$ matrix
$A$, and $p\ge 1$. Assume that there exists $1>\lambda >0$ such that
for every $f:[N]\to X$ with $\sum_{i=1}^N f(i)=0$ we have,
\begin{equation} \label{eq:norm-bound}
\|Af\|_{L_p(X)} \le \lambda \|f\|_{L_p(X)} .\end{equation}
Then,
\[ \bpconst(A,\|\cdot\|_X^p) \le 8^p (1-\lambda)^{-p} . \]
\end{proposition}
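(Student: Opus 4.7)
The plan is to derive $\gamma_+(A,\|\cdot\|_X^p)\le 8^p(1-\lambda)^{-p}$ by combining the operator norm hypothesis with convexity and the triangle inequality in $L_p(X)$. Fix $x,y:[N]\to X$ and set
$$T=\frac{1}{N}\sum_{i,j}a_{ij}\|x_i-y_j\|_X^p,\qquad S=\frac{1}{N^2}\sum_{i,j}\|x_i-y_j\|_X^p,$$
so the goal is $S\le 8^p(1-\lambda)^{-p}T$. A preliminary observation, coming from double stochasticity of $A$, is that $\bar x-\bar y=\frac{1}{N}\sum_{i,j}a_{ij}(x_i-y_j)$; since the coefficients $a_{ij}/N$ form a probability measure on $[N]\times[N]$, convexity of $\|\cdot\|_X^p$ yields $\|\bar x-\bar y\|_X^p\le T$.

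The heart of the argument is a mean-zero displacement bound $\|x-\bar x\|_{L_p(X)}\le 2(1-\lambda)^{-1}T^{1/p}$ (and likewise for $y$), obtained by routing through $A^2$ rather than $A$. Two row-wise convexity steps give $\|x-Ay\|_{L_p(X)}^p\le T$, and the same calculation together with the symmetry $a_{ij}=a_{ji}$ gives $\|y-Ax\|_{L_p(X)}^p\le T$. Since $A$ is a Markov operator it contracts $L_p(X)$, so $\|A(y-Ax)\|_{L_p(X)}\le T^{1/p}$; the triangle inequality then yields
$$\|x-A^2x\|_{L_p(X)}\le \|x-Ay\|_{L_p(X)}+\|A(y-Ax)\|_{L_p(X)}\le 2T^{1/p}.$$
Because $x-\bar x$ is mean-zero and $A$ preserves constants, the hypothesis applied twice gives $\|A^2x-\bar x\|_{L_p(X)}=\|A^2(x-\bar x)\|_{L_p(X)}\le \lambda^2\|x-\bar x\|_{L_p(X)}$. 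A final triangle step then produces $(1-\lambda^2)\|x-\bar x\|_{L_p(X)}\le 2T^{1/p}$, whence $\|x-\bar x\|_{L_p(X)}\le 2T^{1/p}/(1-\lambda)$, using $1-\lambda^2\ge 1-\lambda$.

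To assemble the pieces I consider $F:[N]^2\to X$ with $F(i,j)=x_i-y_j$, whose mean is $\bar F=\bar x-\bar y$ and whose fluctuation is $F(i,j)-\bar F=(x_i-\bar x)-(y_j-\bar y)$. Applying $(a+b)^p\le 2^{p-1}(a^p+b^p)$ twice --- first pointwise on $X$, then to split $\|F\|_{L_p(X)}^p$ as $\|F-\bar F\|^p+\|\bar F\|^p$ --- yields
$$\|F-\bar F\|_{L_p(X)}^p\le 2^{p-1}\bigl(\|x-\bar x\|_{L_p(X)}^p+\|y-\bar y\|_{L_p(X)}^p\bigr)\le \frac{4^p T}{(1-\lambda)^p},$$
and hence $S\le 2^{p-1}\bigl(4^pT/(1-\lambda)^p+T\bigr)\le 8^pT/(1-\lambda)^p$, where the final $T$ is absorbed using $(1-\lambda)^{-p}\ge 1$. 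The one slightly subtle point is why the detour through $A^2$ is essential: a direct bound on $\|x-Ax\|_{L_p(X)}$ would instead produce the \emph{single-variable} Dirichlet form $\frac{1}{N}\sum a_{ij}\|x_i-x_j\|_X^p$, which in general is not comparable to the mixed form $T$; routing through $Ay$ and $Ax$ imports $T$ cleanly via two $L_p$ contractions of $A$.
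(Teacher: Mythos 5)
Your proof is correct, and it takes a genuinely different route from the paper's. The paper passes to the doubled space $L_p([2N];X)$, applies the norm bound once to the block matrix $\bigl(\begin{smallmatrix}0&A\\A&0\end{smallmatrix}\bigr)$ acting on $\widetilde f\oplus\widetilde g$ to get $(1-\lambda)\|\widetilde f\oplus\widetilde g\|_{L_p(X)}\le\bigl\|\bigl(\begin{smallmatrix}I&-A\\-A&I\end{smallmatrix}\bigr)(\widetilde f\oplus\widetilde g)\bigr\|_{L_p(X)}$, and then controls the mean term $\|\bar f-\bar g\|_X^p$ via the Birkhoff--von Neumann decomposition of $A$ into permutation matrices. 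You instead stay in $L_p([N];X)$ and bound each fluctuation $\|x-\bar x\|_{L_p(X)}$, $\|y-\bar y\|_{L_p(X)}$ separately by routing $x\to Ay\to A^2x$; this is in effect the ``unfolded'' square of the paper's block matrix, and it costs you two applications of the hypothesis (yielding $1-\lambda^2$ in place of $1-\lambda$, which you correctly absorb). Your treatment of the mean term is cleaner than the paper's: since $\bar x-\bar y=\frac1N\sum_{i,j}a_{ij}(x_i-y_j)$ by double stochasticity, plain Jensen with the weights $a_{ij}/N$ gives $\|\bar x-\bar y\|_X^p\le T$ with no need for Birkhoff--von Neumann. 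Two small points you should make explicit: the second application of the hypothesis requires that $A(x-\bar x)$ again has mean zero, which holds because the columns of $A$ sum to one; and the bound $\|y-Ax\|_{L_p(X)}^p\le T$ genuinely uses the symmetry $a_{ij}=a_{ji}$, as you note. Neither is a gap. Both arguments land on the same constant $8^p(1-\lambda)^{-p}$.
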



Still, ``$T_t/C^\bot$" is not a low degree unweighted graph. To make
it an unweighted graph we first truncate the weights of $T_t$ below
an appropriate threshold, and approximate the remaining (large
enough) weights using multiple parallel edges. This process is
summarized in the following lemma (whose proof is deferred to the
full version due to lack of space).
\begin{lemma} \label{lem:noise-truncation-b}
Let $\tau=(1-e^{-t})/2$.
There exists a Cayley graph on $\mathbb F_2^n$, $G=(\mathbb F_2^n,E)$, having degree at most $\tau^{-4\tau n} (1-\tau)^{-(1-4\tau )n}$ such that
for every metric space $(X,d_X)$, $p\ge 1$ and $f,g:\mathbb F_2^n \to X$,
\begin{multline} \label{eq:trunc-noise}
 \ifsodaelse{}{\frac{1}{3|E|} \sum_{(x,y)\in E}  d_X(f(x),g(y))^p \\  \le}
  \frac{1}{2^n} \sum_{x,y\in \mathbb F_2^n} (T_t)_{xy}d_X(f(x),g(y))^p
\ifsodaelse{\\}{}
\le \frac{3}{|E|} \sum_{(x,y)\in E}  d_X(f(x),g(y))^p ,
\end{multline}
as long as $18 \tau^2 n \ge 2 p \log n +\log 4$.
\end{lemma}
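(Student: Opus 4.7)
The plan is to construct $G$ as a Cayley graph whose generating multiset discretizes the noise distribution underlying $T_t$. From Claim~\ref{cl:real-beckner}, $T_t$ has kernel $(T_t)_{xy}=w(x-y)$ with $w(s)=\tau^{\|s\|_1}(1-\tau)^{n-\|s\|_1}$, so $T_t$ is the random walk that flips each coordinate independently with probability $\tau$. First I would truncate the noise: let $S_0=\{s\in\F_2^n:\|s\|_1\le 4\tau n\}$ and $w_{\min}=\tau^{4\tau n}(1-\tau)^{(1-4\tau)n}$. Hoeffding's inequality yields
\[
P_{\mathrm{tail}}:=\sum_{s\notin S_0}w(s)=\Pr\bigl[\mathrm{Bin}(n,\tau)>4\tau n\bigr]\le e^{-18\tau^2 n},
\]
which under the stated hypothesis $18\tau^2 n\ge 2p\log n+\log 4$ is at most $1/(4n^{2p})$. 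This quantitative tail bound is the heart of the argument.

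For the construction, assign each $s\in S_0$ the multiplicity $m(s)=\lceil w(s)/w_{\min}\rceil$ and let $G=\mathrm{Cay}(\F_2^n,S)$ where $S$ is the resulting multiset of generators. Since $w(s)\ge w_{\min}$ throughout $S_0$, one has $m(s)\le 2w(s)/w_{\min}$, so the degree is bounded by $|S|\le 2/w_{\min}$; the factor $2$ can be absorbed by slightly tightening the truncation threshold, matching the claimed bound $\tau^{-4\tau n}(1-\tau)^{-(1-4\tau)n}$. Set $D(s)=\E_x d_X(f(x),g(x+s))^p$ and $W_0:=\sum_{s\in S_0}w(s)\ge 1-P_{\mathrm{tail}}\ge 3/4$. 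Both sides of~\eqref{eq:trunc-noise} take the form $\sum_s \mu(s)D(s)$: on the right with $\mu=w$, and on the left with $\mu(s)=m(s)/|S|$, supported on $S_0$ and satisfying $w(s)/2\le \mu(s)\le 3w(s)$ there. The upper inequality in~\eqref{eq:trunc-noise} then follows at once.

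The remaining (and main) work is the lower inequality, for which I need $\sum_{s\notin S_0}w(s)D(s)\le \tfrac{1}{3}\sum_s w(s)D(s)$. The plan is a path expansion exploiting translation invariance of the uniform measure on $\F_2^n$: for $s$ with $\|s\|_1=k$ write $s=e_{i_1}+\cdots+e_{i_k}$ and combine the triangle inequality with the power-mean inequality along the path $x,x+e_{i_1},\ldots,x+s$ to obtain
\[
D(s)\le (k+1)^{p-1}\Bigl(D(0)+\sum_{i\,:\,s_i=1}\E_x d_X(g(x),g(x+e_i))^p\Bigr);
\]
a further triangle step bounds the $g$-only term by a $p$-dependent multiple of $D(0)+D(e_i)$. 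Multiplying by $w(s)$, summing over $s\notin S_0$, and using $\sum_{s\notin S_0,\,s_i=1}w(s)\le P_{\mathrm{tail}}$ gives $\sum_{s\notin S_0}w(s)D(s)\lesssim n^p\,P_{\mathrm{tail}}\bigl(D(0)+\tfrac{1}{n}\sum_i D(e_i)\bigr)$, while retaining only the $s=0$ and $s=e_i$ terms in the main sum shows $\sum_s w(s)D(s)\gtrsim (1-\tau)^n\bigl(D(0)+\tau\sum_i D(e_i)\bigr)$. The ratio is therefore at most a constant multiple of $n^p e^{2\tau n}P_{\mathrm{tail}}$, which the hypothesis on $\tau,n,p$ forces to be at most $\tfrac{1}{3}$. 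The main obstacle is not any single step but rather the need to control the potentially huge values $D(s)$ for $s$ far from $0$ in terms of the quantities $D(0),D(e_i)$ that already appear with non-negligible weight in the main sum; the path expansion together with translation invariance accomplishes this reduction at the cost of a polynomial-in-$n$ factor, which the Hoeffding bound is exactly strong enough to absorb.
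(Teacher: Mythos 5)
The paper itself defers the proof of this lemma to its full version, so I am judging your argument on its own merits. Your setup is the natural one and matches the lemma's constants exactly: the truncation at Hamming weight $4\tau n$, the Hoeffding estimate $\Pr[\mathrm{Bin}(n,\tau)>4\tau n]\le e^{-18\tau^2 n}\le \frac{1}{4n^{2p}}$ under the stated hypothesis, the multiplicities $\lceil w(s)/w_{\min}\rceil$ giving the two-sided pointwise comparison $w(s)/2\le\mu(s)\le 3w(s)$ on $S_0$ (with the degree bound correct up to a factor you acknowledge), and the observation that the direction of \eqref{eq:trunc-noise} needing only $\mu\le 3w$ on the support of $\mu$ is immediate. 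All of that is fine.

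The gap is in the tail estimate for the remaining direction. You bound $\sum_{s\notin S_0}w(s)D(s)$ above by roughly $n^p P_{\mathrm{tail}}\bigl(D(0)+\sum_i D(e_i)\bigr)$ and the full sum below by $(1-\tau)^n\bigl(D(0)+\tau\sum_i D(e_i)\bigr)$; the lower bound retains only the $e^{-\Theta(\tau n)}$ fraction of the $w$-mass sitting at $s=0$ and $s=e_i$, so the ratio you must control carries a factor $(1-\tau)^{-n}\tau^{-1}\gtrsim e^{\tau n}/\tau$. Hoeffding only supplies $P_{\mathrm{tail}}\le e^{-18\tau^2 n}$, and $e^{2\tau n}e^{-18\tau^2 n}=e^{2\tau n(1-9\tau)}$ diverges whenever $\tau<1/9$ and $\tau n\to\infty$ --- precisely the regime the lemma must cover, since the hypothesis $18\tau^2 n\ge 2p\log n+\log 4$ permits $\tau\asymp\sqrt{(\log n)/n}$ and the application in Lemma~\ref{lem:base-intro} takes $t\asymp n^{-1/\alpha}$, hence $\tau=o(1)$. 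Concretely, for $p=2$ and $\tau=n^{-1/3}$ the hypothesis holds for large $n$, yet $n^p e^{2\tau n}P_{\mathrm{tail}}\ge n^2 e^{2n^{2/3}-18n^{1/3}}\to\infty$, so the hypothesis does not force your ratio below $\tfrac13$. To repair this you must compare the tail terms $D(s)$, $\|s\|_1>4\tau n$, not to $D(0)$ and $D(e_i)$ but to $D(s')$ for \emph{typical} $s'$ of weight close to $\tau n$ (which carry constant $w$-mass and constant $\mu$-mass), e.g.\ via the three-point inequality $D(a+b+c)\le 3^{p-1}\bigl(D(a)+D(b)+D(c)\bigr)$ --- valid by inserting $g(x+a)$ and $f(x+a+b)$ into the triangle inequality and using translation invariance --- averaged over random decompositions of $s$ into pieces of weight at most $4\tau n$; only then is the loss polynomial rather than $e^{\Theta(\tau n)}$ and absorbable by the Hoeffding bound. (A secondary, harmless slip: $\sum_i D(e_i)\sum_{s\notin S_0,\,s_i=1}w(s)\le P_{\mathrm{tail}}\sum_i D(e_i)$, which is $nP_{\mathrm{tail}}$ times the average $\tfrac1n\sum_i D(e_i)$, not $P_{\mathrm{tail}}$ times it; this extra factor of $n$ is dwarfed by the exponential issue.)
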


Only then we pass to the quotient $\F_2^n/C^\perp$ by identifying
vertices within each coset and dividing the number of the resulting
``quotient edges" between each coset by $|C^\perp|$. This is
formally summarized as the proof of Lemma~\ref{lem:base-intro}.

\begin{proof}[Proof of Lemma~\ref{lem:base-intro}]
The argument works for any power $p\in(1,\infty)$ and not just $p=2$.
Fix a $K$-convex Banach space $X$, and let $\alpha\ge 1$ and $a>0$ as in Lemma~\ref{lem:bounding-beckner}.

Fix $n\in \mathbb N$.
Set $t= \min\{n^{-1/\alpha}/a, 0.1\}$, $\tau=(1-e^{-t})/2$ and apply the discretization process --- Lemma~\ref{lem:noise-truncation-b} --- on $T_t$, from which we obtain a Cayley graph $G=G_n=(\mathbb F_2^n,E)$ whose degree is at most
\begin{equation*}
 \tau^{-4\tau n} (1-\tau)^{(1-4\tau)n} \le \tau^{-8\tau n}
 \ifsodaelse{\\}{}
 \le \max \left\{40^{320\cdot10^{\10\alpha}}, n^{2n^{1-1/a\alpha}/\alpha}/a\right\} \lesssim e^{n^{1-\delta}} ,
 \end{equation*}
for some $\delta=\delta(X)\in(0,1)$, depending only on $\alpha$ and $a$ (and hence on $X$), but not on $n$.

Let $C\subseteq \F_2^n$ be a ``good" linear code of dimension at
least $n/10$ and minimum weight $m \ge n/10$. We define
$H=H_n=(\F_2^n/C^\perp,E/C^\perp)$: the quotient of $G$ by
$C^\perp$, whose vertex-set is the set of cosets of $C^\perp$ and in
the edge (multi)set $E/C^\perp$  the number of edges  between  two
cosets $x+C^\perp$ and $y+C^\perp$,  is the number of edges in $G$
with  one endpoint in $x+C^\perp$ and the other in $y+C^\perp$
\emph{divided by} $|C^\perp|$. Since $C^\perp$ is a linear subspace
in a vector space, the degree of $H$ is the same as the degree of
$G$.

Next, fix $f:\F_2^n/C^\perp \to X$ with $\sum_{x\in
\F_2^n/C^\perp}f(x)=0$. Thinking on $f$ as a function on $\F_2^n$
which is constant on cosets of $C^\perp$, we have by
Claim~\ref{cl:KN-cosets} that $\widehat f(A)=0$ for every
$\emptyset\neq A\subseteq\{1,\ldots,n\}$, $|A|< m$. Hence by
Lemma~\ref{lem:bounding-beckner},
\[ \|T_t f\|_{L_p(X)}\lesssim e^{- am t^\alpha} \|f\|_{L_p(X)} \le \lambda \|f\|_{L_p(X)} ,\]
where $0<\lambda<1$ is a universal constant. Since this is true for
any $f:\F_2^n/C^\perp \to X$ with $\sum_{x\in \F_2^n/C^\perp}
f(x)=0$, applying Proposition~\ref{prop:linear=>nonlin} we have that
for every $f,g:\F_2^n/C^\perp \to X$,
\begin{align}
\nonumber \frac{|C^\perp|^2}{2^{2n}} & \sum_{x,y\in\F_2^n/C^\perp} \|f(x)-g(y)\|_X^2
\\
\label{eq:base-intro-1} &\lesssim \frac{|C^\perp|}{2^n}
\ifsodaelse{\!\!\!}{}
\sum_{x,y\in \F_2^n/C^\perp}  \frac{1}{|C^\perp|}
\ifsodaelse{\\ \nonumber  &\quad}{}
\sum_{a,b\in C^\perp} (T_t)_{x+a,y+b} \|f(x+a)-g(y+b)\|_X^2 \displaybreak[1]\\
\nonumber & = \frac{1}{2^n} \sum_{x,y\in \F_2^n}  (T_t)_{x,y} \|f(x)-g(y)\|_X^2 \\
\label{eq:base-intro-3} & \le \frac{3}{|E|} \sum_{(x,y)\in E}\|f(x)-g(y)\|_X^2
\\
\nonumber & = \frac{3}{|E/C^\perp|} \sum_{(x,y)\in E/C^\perp}\|f(x)-g(y)\|_X^2.
\end{align}
The estimate~\eqref{eq:base-intro-1} follows from Proposition~\ref{prop:linear=>nonlin},
and~\eqref{eq:base-intro-3}
follows from~\eqref{eq:trunc-noise}, and is valid since  $18\tau^2 n\ge 4 \log n+\log 4$.
\end{proof}

\subsection{Discussion}

In the full version of this extended abstract we will also establish the metric Markov cotype
property~\eqref{eq:intro def cotype} for other classes of metric spaces, including $CAT(0)$
metric spaces, and in particular simply connected manifolds with non-positive sectional
curvature (in which case we can take $C=O(1)$ and $\e=1$). We will discuss this notion
in detail and show that it implies a slight variant of Ball's Markov cotype. This
fact is new for $CAT(0)$ metric spaces, and hence, in conjunction with Ball's
extension theorem~\cite{Ball}, our work implies a new Lipschitz extension theorem for
$CAT(0)$ targets: any Lipschitz function from a subset $U$ of a metric space
$(X,d_X)$ which has Markov type $2$ and takes values in a $CAT(0)$ metric space $(Y,d_Y)$ can be extended
to a Lipschitz function defined on all of $X$ whose Lipschitz constant is larger by
at most a constant factor (depending on $X$). The definition of the notion of Markov
type is beyond the scope of this extended abstract: it suffices to say that we can
take $X$ to be Hilbert space\ifsodaelse{}{~\cite{Ball}}, or $L_p$ for $p\in [2,\infty)$ and even
more generally a normed spaces whose modulus of smoothness has power type
$2$\ifsodaelse{}{~\cite{NPSS06}}, a tree, the word metric on a hyperbolic group or a
simply connected manifold with pinched sectional curvature\ifsodaelse{}{~\cite{NPSS06}},
series parallel graphs\ifsodaelse{}{~\cite{BKL07}}, or Alexandrov spaces
(in particular manifolds) of non-negative curvature\ifsodaelse{}{~\cite{Ohta08}}.
While we believe that this extension theorem is a key consequence
of our work, for lack of space we defer the discussion about it to the full version of this paper.

Another major addition to the full version of this extended abstract
will be a different proof of the decay of the Poincar\'e constant of
powers of symmetric stochastic matrices in super-reflexive spaces
and in $CAT(0)$ spaces. This proof is based on using \emph{norm
bounds}. The norm bound $\lambda(A,X,p)$ for a doubly stochastic
$n\times n$ matrix $A$ in a normed space $X$ is the smallest
$\lambda>0$ satisfying~\eqref{eq:norm-bound}.
The quantity $\lambda(A,\mathbb{R},2)$ is the second absolute
eigenvalue of $A$, and Proposition~\ref{prop:linear=>nonlin} shows
that it always controls $\bpconst(A,\|\cdot\|_X^p)$ from above. We
show in the full version that when $X$ is $p$-convex,
$\lambda(A,X,p)$ also controls $\bpconst(A,\|\cdot\|_X^p)$ from
below, and this allows us to obtain a short proof of the decay of
$\bpconst(A^t,\|\cdot\|_X^p)$ as a function  of $t$ when $X$ is a
$p$-convex space. This alternative approach has the advantage that
it proves the decay of the Poincar\'e constant of the power of the
matrix instead of its Ces\'aro average, and it is also shorter than
the Markov cotype approach. However, in many respects it is less
``robust" than the Markov cotype approach taken here, it yields
worse bounds, and most importantly, it yields a Poincar\'e
inequality which is insufficient for our construction of zigzag
super-expanders. All of these issues will be discussed in the full
version.

\ifsodaelse{}{
A major question left open is what are the geometric conditions
which allows for ``sufficient" decay (for the purposes of our zigzag
iteration) of the Poincar\'e constant. We give an example in
 Section~\ref{sec:no-decay} for a metric space in which the decay is insufficient.
 However, there are natural spaces like $L_\infty$ and $L_1$ for which we do not know the answer.
 Indeed, it may be the case that any normed space has a sufficient decay.

Another question is what are the geometric conditions on metric
spaces for the existence of expanders with respect to those spaces.
It is tempting to conjecture that for normed spaces, having such an
expander is equivalent to having finite cotype (i.e., $\exists
\e_0>0, n_0\in \mathbb N$ such that $\ell_\infty^{n_0}$ does not
embed with distortion less than $1+\e_0$).

As was mentioned in Section~\ref{sec:coarse}, it is an open question
whether every ``classical" expander is also super-expander. In the
full version we rule out the most obvious approach to prove such a
result: coarsely embedding any expander family in any other expander
family. We give an example of two expander families $\mathcal F_1$
and $\mathcal F_2$  such that $\mathcal F_1$ does not coarsely embed
in $\mathcal F_2$.

It seems to be a challenging problem whether
in~\eqref{eq:laplcian-K-convex} we can take $\delta=1$. Even in the
case $X=L_p$, $p\ne 2$, this seems to be unknown. A classical result
of Meyer~\cite{Meyer} implies that for $p\ge 2$ we can take
$\delta=\frac12$, but even this improved bound is insufficient for
our purpose of constructing a ``sufficiently good" base graph.
}

\section{Some Details} \label{sec:some-proofs}

\subsection{Markov cotype and the decay of $\gamma$}
\label{sec:details-decay}

\begin{proof}[Proof of Claim~\ref{cl:decay for gamma}]
Write $B= (b_{ij})= \frac{1}{m}\sum_{t=0}^{m-1}A^t$. We may assume that $\gamma\left(B,d_X^2\right)> 12C^2$, since otherwise we are done. This assumption implies that there exist $x_1,\ldots,x_n\in X$ such that:
\begin{equation}\label{eq:reverse poin}
\frac{1}{n^2}\sum_{i=1}^n\sum_{j=1}^n d_X(x_i,x_j)^2>\frac{12C^2}{n}\sum_{i=1}^n\sum_{j=1}^n b_{ij} d_X(x_i,x_j)^2.
\end{equation}
Let $y_1,\ldots,y_n\in X$ be as in~\eqref{eq:intro def cotype}. Note that for all $i,j\in \{1,\ldots,n\}$ we have:
\begin{multline}\label{eq:triange square}
d_X(x_i,x_j)^2\le \left[d_X(x_i,y_i)+d_X(y_i,y_j)+d_X(y_j,x_j)\right]^2
\\ \le 3d_X(x_i,y_i)^2+3d_X(y_i,y_j)^2+3d_X(y_j,x_j)^2.
\end{multline}
By averaging~\eqref{eq:triange square} we get the bound:
\begin{align}
\nonumber\frac{1}{n^2} &\sum_{i=1}^n\sum_{j=1}^n d_X(y_i,y_j)^2
\nonumber \ge \frac{1}{3n^2}\sum_{i=1}^n\sum_{j=1}^n d_X(x_i,x_j)^2-\frac{2}{n}\sum_{i=1}^nd_X(x_i,y_i)^2 \displaybreak[1]\\
&=\nonumber \frac{1}{6n^2}\sum_{i=1}^n\sum_{j=1}^n d_X(x_i,x_j)^2
\ifsodaelse{\\ \nonumber & \qquad}{}
+\frac{1}{6n^2}\sum_{i=1}^n\sum_{j=1}^n d_X(x_i,x_j)^2-\frac{2}{n}\sum_{i=1}^nd_X(x_i,y_i)^2
\displaybreak[1]\\
&\stackrel{\eqref{eq:reverse poin}}{>} \nonumber \frac{1}{6n^2}\sum_{i=1}^n\sum_{j=1}^n d_X(x_i,x_j)^2
\ifsodaelse{\\ \nonumber & \qquad}{}
+\frac{2C^2}{n}\sum_{i=1}^n\sum_{j=1}^n b_{ij} d_X(x_i,x_j)^2-\frac{2}{n}\sum_{i=1}^nd_X(x_i,y_i)^2\\
&\stackrel{\eqref{eq:intro def cotype}}{\ge} \frac{1}{6n^2}\sum_{i=1}^n\sum_{j=1}^n d_X(x_i,x_j)^2.
\label{eq:left part of cotype}
\end{align}
Using~\eqref{eq:intro def cotype} once more, in conjunction with the definition of $\gamma\left(A,d_X^2\right)$, we see that:
\begin{multline*}
C^2\sum_{i=1}^n\sum_{j=1}^n b_{ij}d_X(x_i,x_j)^2
\ifsodaelse{\\}{}
\ge \frac{m^\e}{\gamma(A,d_X^2)}\cdot \frac{1}{n^2}\sum_{i=1}^n\sum_{j=1}^n d_X(y_i,y_j)^2 \\ \stackrel{\eqref{eq:left part of cotype}}{>} \frac{m^\e}{6\gamma(A,d_X^2)}\cdot \frac{1}{n^2}\sum_{i=1}^n\sum_{j=1}^n d_X(x_i,x_j)^2.
\end{multline*}
This concludes the proof of~\eqref{eq:decay for gamma}.
\end{proof}

\begin{proof}[Proof of Claim~\ref{cl:gamma+-as-tensor}]
Fix $f,g:[n]\to X$, and define $h:[2n]\to X$,
\[ h(x)= \begin{cases} f(x) \ & x\le n\\ g(x-n) &x>n. \end{cases} \]
Then
\begin{align*} \frac1{n^{2}}
\ifsodaelse{&}{} \sum_{x,y\in [n]} d_X(f(x),g(y))^2
\ifsodaelse{\\}{}
&= \frac1{n^{2}} \sum_{x,y\in [n]} d_X(h(x),h(y+n))^2
\\ & \le \frac{2}{ (2n)^{2}} \sum_{x,y\in [2n]} d_X(h(x),h(y))^2
\displaybreak[1]
\\ & \le    \gamma\left( \left( \begin{smallmatrix}   0 & A \\    A & 0 \end{smallmatrix} \right),d_X^2\right)
\frac{2}{2n} \sum_{x,y\in [n]} 2A_{xy} d_X(h(x),h(y+n))^2
\\ & = \frac{2 \gamma\left( \left( \begin{smallmatrix}    0 & A \\    A & 0 \end{smallmatrix} \right),d_X^2\right) }
{n} \sum_{x,y\in [n]} A_{xy} d_X(f(x),g(y))^2.
\end{align*}

We next prove the lower bound.
Fix $h:[2n]\to X$, and define $f,g:[n]\to X$, $f(x)=h(x)$, and $g(x)=h(x+n)$.
Then,
\begin{multline*}  \frac1{n^{2}}\sum_{x,y\in [n]} d_X(h(x),h(y))^2
 \le \frac1{n^2}\sum_{x,y\in [n]} \ifsodaelse{\!\!\!}{} 2 \sum_{z\in [n]}
\left( \begin{aligned}
\ifsodaelse{&}{} d_X(h(x),h(z+n))^2 \ifsodaelse{\\ &}{} + d_X(h(y),h(z+n))^2 \end{aligned}\right )
\\
=
\frac 4 {n^2} \sum_{x,y\in [n]}d_X(f(x),g(y))^2 .
\end{multline*}
Similarly,
\begin{eqnarray*} &&\!\!\!\!\!\!\!\!\!\!\!\!\!\!\!\!}{\!\!\!\!\!\!\!\!\! \frac1{n^2}\sum_{x,y\in [n]} d_X(h(x+n),h(y+n))^2
\\  &\le&
\ifsodaelse{}{
  \frac1{n^2}\sum_{x,y\in [n]} \ifsodaelse{\!\!\!}{} 2 \sum_{z\in [n]}\left(
  \begin{aligned}
  \ifsodaelse{&}{} d_X(h(x+n),h(z))^2 \ifsodaelse{\\ &}{} + d_X(h(y+n),h(z))^2
  \end{aligned}
  \right)
  \\
  &=&
}
\frac 4 {n^2} \sum_{x,y\in [n]}d_X(f(x),g(y))^2 .
\end{eqnarray*}

Hence, by summing the above two inequalities,
\begin{align*}
\frac{1}{(2n)^{2}} &\sum_{\substack{x,y\in [2n]}} d_X(h(x),h(y))^2
\ifsodaelse{}{
  \\ &=\frac{1}{(2n)^{2}} \sum_{\substack{x,y\in [n]}} d_X(h(x),h(y))^2
  \ifsodaelse{\\ & \quad}{}
  + \frac{1}{(2n)^{2}} \sum_{\substack{x,y\in [n]}} d_X(h(x+n),h(y+n))^2\\
  & \quad
  +\frac{1}{(2n)^{2}} \sum_{\substack{x,y\in [n]}} d_X(h(x),h(y+n))^2
  \ifsodaelse{\\ & \quad}{}
  + \frac{1}{(2n)^{2}} \sum_{\substack{x,y\in [n]}} d_X(h(x+n),h(y))^2 \displaybreak[1]
}
\\ & \le \frac{1+1+0.25+0.25}{n^{2}} \sum_{x,y\in [n]}d_X(f(x),g(y))^2 \displaybreak[1]
\\ & \le \frac{2.5 \bpconst(G,d_X^2)}{n} \sum_{x,y\in [n]} A_{xy} d_X(f(x),g(y))^2
\\ &  =  \frac{2.5 \bpconst(G,d_X^2)}{n} \ifsodaelse{\!\!}{} \sum_{x,y\in [n]}
\ifsodaelse{\!\!}{} A_{xy} d_X(h(x),h(y+n))^2
. \qedhere
\end{align*}
\end{proof}

\begin{proof}[Proof of Claim~\ref{cl:commute tenzor}]
We observe that for odd $t$,
$\bigl(\begin{smallmatrix} 0 & A\\ A& 0 \end{smallmatrix}\bigr)^t= \bigl (\begin{smallmatrix} 0 & A^t\\ A^t& 0 \end{smallmatrix}\bigr)$.
What complicates matters is that for even $t$,
$\bigl(\begin{smallmatrix} 0 & A\\ A& 0 \end{smallmatrix}\bigr)^t= \bigl (\begin{smallmatrix}   A^t & 0\\ 0 & A^t \end{smallmatrix}\bigr)$.
However, we can write every even $t>0$ as a sum of two almost unique odd numbers
by defining for even $t$, $o_1(t)\le o_2(t)$ the unique pair of odd numbers
such that $o_2(t)-o_1(t)\le 2$, and $o_1(t)+o_2(t)=t$.
Note that the multiset
\[ \{o_1(t),o_2(t):\ t\text{ is even, and } t\le m \}\]
contains all the odd numbers in the range $\{1,\ldots \lfloor m/2 \rfloor+1\}$,
and at most four items for each value. Let $B=\bigl ( \begin{smallmatrix} 0 & A \\ A& 0 \end{smallmatrix}\bigr)$.
We therefore can bound for every $h:[2n]\to X$,
\begin{align*}
 \frac1{2n} &\sum_{x,y\in [2n]}  (\A_m(B))_{xy} d_X(h(x),h(y))^2
\ifsodaelse{}{
   =
  \frac{1}{2nm} \sum_{t=0}^{m-1} \sum_{x,y \in [2n]} (B^t)_{xy} d_X(h(x),h(y))^2
}
\displaybreak[0]
\\  & \le
\frac{1}{2nm} \sum_{\substack{t\in[m]\\ t\text { is odd}}}
\sum_{x,y \in [2n]} (B^t)_{xy} d_X(h(x),h(y))^2
\ifsodaelse{\\ & \quad}{}
 +\frac{1}{2nm} \sum_{x\in[2n]} d_X(h(x),h(x))^2
\\  &  +
\frac{1}{2nm} \sum_{\substack{t\in\{2,\ldots,m-1\}\\ t\text { is even}} }
\ifsodaelse{\\ & \qquad\qquad}{}
2\biggl( \sum_{x,y\in[2n] } (B^{o_1(t)})_{xy} d_X(h(x),h(y))^2
 +\sum_{x,y \in [2n]} (B^{o_2(t)})_{xy}d_X(h(x),h(y))^2 \biggr )
 \displaybreak[1] \\   & \le \frac{9}{2nm}
\sum_{\substack{t\in[m]\\ t\text { is odd}}}
\sum_{x,y\in[2n] }  (B^t)_{xy}  d_X(h(x),h(y))^2
\ifsodaelse{}{
  \\  & = \frac{9}{2nm}
  \sum_{\substack{t\in[m]\\ t\text { is odd}}}
  \sum_{x,y\in[2n] }  \bigl(\begin{smallmatrix} 0 & A^t\\ A^t & 0 \end{smallmatrix} \bigr)_{xy}
  d_X(h(x),h(y))^2
}
\\  & \le
\frac{9}{2n}  \sum_{x,y\in[2n]}  \bigl(\begin{smallmatrix} 0 & \sarkozy_m(A)\\ \sarkozy_m(A) &0 \end{smallmatrix}\bigr)d_X(h(x),h(y))^2  .
\end{align*}
Hence
\begin{multline*}
 \frac{1}{(2n)^2} \sum_{x,y\in[2n]} d_X(h(x),h(y))^2
\le
 \frac{\pconst\left(\sarkozy_m\bigl(\begin{smallmatrix} 0 & A\\ A & 0 \end{smallmatrix} \bigr),d_X^2\right)}
 {2n}
\cdot
 \sum_{x,y\in[2n]} \left( \A_m \bigl(\begin{smallmatrix} 0 & A\\ A & 0 \end{smallmatrix} \bigr)\right)_{xy} d_X(h(x),h(y))^2
 \ifsodaelse{\displaybreak[1]}{}
\\  \le \frac{ 9 \pconst\left(\sarkozy_m\bigl(\begin{smallmatrix} 0 & A\\ A & 0 \end{smallmatrix} \bigr),d_X^2\right)} {2n}
\cdot \sum_{x,y\in[2n]}  \bigl(\begin{smallmatrix} 0 & \sarkozy_m(A)\\ \sarkozy_m(A) &0 \end{smallmatrix}\bigr)d_X(h(x),h(y))^2,
\end{multline*}
which means that
\begin{equation*}
\pconst\left (\bigl(\begin{smallmatrix} 0 & \sarkozy_m(A)\\
\sarkozy_m(A) &0 \end{smallmatrix}\bigr), d_X^2\right ) \le
9\pconst\left(\sarkozy_m\bigl(\begin{smallmatrix} 0 & A\\ A & 0
\end{smallmatrix} \bigr),d_X^2\right). \qedhere
\end{equation*}
\end{proof}

\subsection{Proof of~\eqref{eq:pisier variant}}
\label{sec:proof-pisier-variant}

Our methods here are inspired by Ball's proof~\cite{Ball} of the
Markov cotype property.
Let $(X,\|\cdot\|_X)$ be a normed space. The {\em modulus of uniform
convexity\/} of $X$ is defined for $\e\in [0,2]$ as
\begin{equation}\label{def:convexity}
\delta_X(\e)=\inf\left\{1-\frac{\|x+y\|_X}{2}: \begin{aligned}  & \ x,y\in X,
\ifsodaelse{\\ &}{\ }
\|x\|_X=\|y\|_X=1,
\ifsodaelse{\\ &}{\ }
\|x-y\|_X=\e
\end{aligned}
\right\}.
\end{equation}
A normed space $X$ is said to be {\bf uniformly convex} if
$\delta_X(\e)>0$ for all $\e\in (0,2]$. Furthermore, $X$ is said to
have modulus of convexity of power type $p$ if there exists a
constant $c$ such that $\delta(\e)\ge c\,\e^p$ for all $\e\in
[0,2]$. It is straightforward to check that in this case necessarily
$p\ge 2$. By Proposition 7 in~\cite{BCL} (see also~\cite{Fiegel76}),
$X$ has modulus of convexity of power type $p$ if and only if there
exists a constant $K>0$ such that for every $x,y\in X$
\begin{equation}\label{eq:two point convex}
2\,\|x\|_X^p+\frac{2}{K^p}\,\|y\|_X^p\le \|x+y\|_X^p+\|x-y\|_X^p.
\end{equation}
The least $K$ for which \eqref{eq:two point convex} holds is called
the $p$-convexity constant of $X$, and is denoted $K_p(X)$.

The following lemma is stated and proved in~\cite{Ball} when
$p=2$.

\begin{lemma}\label{lem:from Ball}
Let $X$ be a normed space and $U$ a random vector in $X$ with $\E
\|U\|_X^p<\infty$. Then
\begin{equation} \label{eq:p-convex-rv}
\|\E U\|_X^p+\frac{\E\|U-\E U\|_X^p}{(2^{p-1}-1)K_p(X)^p}\le
\E\|U\|_X^p.
\end{equation}
\end{lemma}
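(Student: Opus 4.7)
The plan is to apply the $p$-convexity inequality \eqref{eq:two point convex} pointwise with $x=\E U$ and $y=U-\E U$, and then to average over $U$. The pointwise inequality yields
\[
2\|\E U\|_X^p+\tfrac{2}{K_p(X)^p}\,\|U-\E U\|_X^p \le \|U\|_X^p+\|2\E U-U\|_X^p,
\]
so after taking expectations the task reduces to appropriately controlling the auxiliary term $\E\|2\E U-U\|_X^p$.

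A useful observation is that the random vector $V:=2\E U - U$ is an involution of $U$ (so $2\E U - V = U$), has the same mean as $U$, and satisfies $\|V-\E V\|_X=\|U-\E U\|_X$ pointwise. When $p=2$, a Hilbert-space computation gives the identity $\E\|V\|_2^2=\E\|U\|_2^2$; substituting this into the displayed inequality recovers Ball's bound with the sharp constant $1/K_2(X)^2$, as in~\cite{Ball}. For general $p\ge 2$ this identity fails, and I would instead bound $\E\|V\|_X^p$ from above using the triangle inequality $\|V\|_X=\|\E U-(U-\E U)\|_X\le \|\E U\|_X+\|U-\E U\|_X$, combined with the weighted elementary estimate $(a+b)^p\le \lambda^{1-p}a^p+(1-\lambda)^{1-p}b^p$ optimized in $\lambda\in(0,1)$.

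Combining this upper bound with the averaged pointwise inequality and collecting terms in $\|\E U\|_X^p$, $\E\|U-\E U\|_X^p$, and $\E\|U\|_X^p$ should produce an inequality from which the desired conclusion follows after an iteration that exploits the symmetry $U\leftrightarrow V$. The iteration generates a geometric series whose common ratio is $1/2^{p-1}$, so that summing it yields precisely the denominator $2^{p-1}-1$ in the final constant.

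The main obstacle is extracting the \emph{sharp} constant $((2^{p-1}-1)K_p(X)^p)^{-1}$. A direct approach via two independent copies $U,U'$ of $U$ — applying \eqref{eq:two point convex} to $x=(U+U')/2$ and $y=(U-U')/2$ and invoking Jensen's inequality — yields only the weaker constant $(2K_p(X))^{-p}$, which misses the target by roughly a factor of $2^p$. Recovering the tight factor $2^{p-1}-1$ is what requires the self-improvement coming from the involution $V\mapsto 2\E U -V=U$, and making the resulting geometric-series iteration precise (and matching the Hilbert case $p=2$ sharply) is the delicate part of the argument.
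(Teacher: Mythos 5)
There is a genuine gap, and it lies exactly where you suspect: in handling the auxiliary term. With your choice $x=\E U$, $y=U-\E U$ in \eqref{eq:two point convex} you must control $\E\|2\E U-U\|_X^p$, and neither of your two devices works. The triangle-inequality bound $\E\|2\E U-U\|_X^p\le \lambda^{1-p}\|\E U\|_X^p+(1-\lambda)^{1-p}\E\|U-\E U\|_X^p$ is fatally lossy: after substitution the coefficient of $\|\E U\|_X^p$ on the left becomes $2-\lambda^{1-p}\le 1$ while the coefficient of $\E\|U-\E U\|_X^p$ becomes $\frac{2}{K^p}-(1-\lambda)^{1-p}$, and there is no $\lambda\in(0,1)$ making the first $\ge 1$ and the second $>0$ (already for $X=\R$, $p=2$, $U\in\{0,2\}$ uniform, the bound gives $\E\|2\E U-U\|^2\le 4$ while the true value is $2=\E\|U\|^2$, so the structure of the target inequality is destroyed). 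The iteration via the involution $V=2\E U-U$ also gives nothing: since \eqref{eq:two point convex} is symmetric under $y\mapsto -y$, applying it to $V$ in place of $U$ reproduces \emph{verbatim} the inequality you already have ($V$ has the same mean, the same fluctuation norm, and swapping $U\leftrightarrow V$ fixes the right-hand side $\|U\|_X^p+\|V\|_X^p$). An exact symmetry cannot be iterated into a geometric series; no contraction occurs.

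The decomposition that works (and is the one the paper uses, following Ball) is $x=\frac12(U+\E U)$, $y=\frac12(U-\E U)$, so that $x+y=U$ and $x-y=\E U$ is \emph{deterministic}, and the auxiliary random vector $\frac12(U+\E U)$ has the same mean as $U$ but \emph{half} the fluctuation. This is where the ratio $2^{1-p}$ genuinely comes from. One can then either iterate directly on $W_k=\E U+2^{-k}(U-\E U)$, obtaining
\begin{equation*}
2\,\E\|W_{k+1}\|_X^p+\frac{2^{1-(k+1)p}}{K^p}\,\E\|U-\E U\|_X^p\le \E\|W_k\|_X^p+\|\E U\|_X^p,
\end{equation*}
and sum the resulting geometric series $\sum_k 2^{k(1-p)}=(1-2^{1-p})^{-1}$ to get the constant $((2^{p-1}-1)K_p(X)^p)^{-1}$; or, as the paper does, define $\theta$ as the optimal constant in \eqref{eq:p-convex-rv} over all random vectors, apply \eqref{eq:two point convex} to a near-extremizer with the above choice of $x,y$, and invoke the definition of $\theta$ on the contracted vector to obtain the self-improving relation $\theta\ge \frac{\theta}{2^{p-1}}+\frac{1}{2^{p-1}K^p}$, which solves to the claimed bound. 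Your overall intuition (a geometric series with ratio $2^{1-p}$ producing $2^{p-1}-1$) is right, but it is driven by the halving of the fluctuation, not by the involution $U\mapsto 2\E U-U$.
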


\begin{proof}
We repeat here the $p>2$ variant of the argument from~\cite{Ball}
for the sake of completeness. Define
\begin{equation}\label{eq:def theta}
\theta=\inf\biggl\{\frac{\E \|V\|_X^p-\|\E V\|_X^p}{\E\|V-\E
V\|_X^p}:
\ifsodaelse{\\}{\ }
V\in L_p(X)\ \wedge\ \E\|V-\E V\|_X^p>0\biggr\}.
\end{equation}
Then $\theta\ge 0$. Our goal is to show that
\begin{equation}\label{eq:goal lower theta}
\theta\ge \frac{1}{(2^{p-1}-1)K_p(X)^p}.
\end{equation}
Fix $\phi>\theta$. Then there exists a random vector $V_0\in L_p(X)$
for which
\begin{equation}\label{eq:reverse theta}
\phi \E\|V_0-\E V_0\|_X^p> \E \|V_0\|_X^p-\|\E V_0\|_X^p.
\end{equation}
Fix $K>K_p(X)$. Apply the inequality~\eqref{eq:two point convex}
point-wise to the vectors $x=\frac12 V_0+\frac12 \E V_0$ and
$y=\frac12 V_0-\frac12 \E V_0$, to get that
\begin{equation}\label{eq:point-wise}
2\left\|\frac12 V_0+\frac12 \E
V_0\right\|_X^p+\frac{2}{K^p}\left\|\frac12 V_0-\frac12 \E
V_0\right\|_X^p
\ifsodaelse{\\}{}
\le \|V_0\|_X^p+\|\E V_0\|_X^p.
\end{equation}
Hence
\begin{align*}
\phi \ifsodaelse{&}{} \E \ifsodaelse{}{&} \|V_0-\E V_0\|_X^p
\stackrel{\eqref{eq:reverse theta}}{>} \E
\|V_0\|_X^p-\|\E V_0\|_X^p
\displaybreak[1]\\
&\stackrel{\eqref{eq:point-wise}}{\ge} \ifsodaelse{\!\!}{} 2\left(\E\left\|\frac12
V_0+\frac12 \E V_0\right\|_X^p
\ifsodaelse{\!\!}{}
-\left\|\E \left(\frac12V_0+\frac12\E
V_0\right)\right\|_X^p\right)
  + \frac{2}{K^p}\E\left\|\frac12
V_0-\frac12 \E V_0\right\|_X^p
\displaybreak[1] \\
&\stackrel{\eqref{eq:def
theta}}{\ge} 2\E\left\|\left(\frac12V_0+\frac12\E V_0\right)-\E
\left(\frac12V_0+\frac12\E
V_0\right)\right\|_X^p
\ifsodaelse{\\ & \qquad}{}
+\frac{2}{K^p}\E\left\|\frac12 V_0-\frac12 \E
V_0\right\|_X^p\\
&=
\left(\frac{\theta}{2^{p-1}}+\frac{1}{2^{p-1}K^p}\right)\E\|V_0-\E
V_0\|_X^p.
\end{align*}
Thus
\begin{equation}\label{eq:phi}
\phi\ge \frac{\theta}{2^{p-1}}+\frac{1}{2^{p-1}K^p}.
\end{equation}
Since~\eqref{eq:phi} holds for all $\phi>\theta$ and $K>K_p(X)$, the
desired lower bound~\eqref{eq:goal lower theta} follows.
\end{proof}

Let $X$ be a Banach space with $K_p(X)<\infty$. Assume that
$\{M_k\}_{k=0}^n\subseteq X$ is a martingale with respect to the
filtration $\mathcal{F}_0\subseteq \mathcal{F}_1\subseteq
\cdots\subseteq \mathcal{F}_{n-1}$; that is, $\E\left(M_{i+1}
|\mathcal{F}_{i}\right)=M_i$ for every $i\in\{0,1,\dots,n-1\}$.
Lemma~\ref{lem:from Ball} implies that
\begin{align}
\E\bigl[&\left.\left\|M_n-M_0\right\|_X^p\right|\mathcal{F}_{n-1}\bigr]\nonumber
\\ \nonumber &\ge
\left\|\E\left[\left.
M_n-M_0\right|\mathcal{F}_{n-1}\right]\right\|_X^p
 +\frac{
\E\left[\left.\left\|M_n-M_0-\E\left[\left.M_n-M_0\right|\mathcal{F}_{n-1}\right]\right\|_X^p\right|\mathcal{F}_{n-1}\right]}{(2^{p-1}-1)K_p(X)^p}\nonumber
\\  &= \left\|M_{n-1}-M_0\right\|_X^p
+\frac{\E\left[\left.\left\|M_n-M_{n-1}\right\|_X^p\right|\mathcal{F}_{n-1}\right]}{(2^{p-1}-1)K_p(X)^p}.
\label{eq:pisier iteration}
\end{align}
Taking expectation in~\eqref{eq:pisier iteration} implies that
\begin{equation*}
\E\left[\left\|M_n-M_0\right\|_X^p\right]
\ifsodaelse{\\}{}
\ge
\E\left[\left\|M_{n-1}-M_{0}\right\|_X^p\right]+\frac{\E\left[\left\|M_n-M_{n-1}\right\|_X^p\right]}
{(2^{p-1}-1)K_p(X)^p}.
\end{equation*}
Iterating this argument we obtain the following famous inequality of
Pisier~\cite{Pisier-martingales}.

\begin{theorem}[Pisier~\cite{Pisier-martingales}]\label{thm:pisier ineq} Let $X$ be a
Banach space with $K_p(X)<\infty$. Assume that
$\{M_k\}_{k=0}^n\subseteq X$ is a martingale (with respect some
filtration). Then
\[
\E \left[\left\|M_n-M_0\right\|_X^p\right]\ge
\frac{\sum_{k=1}^n\E\left[\left\|M_k-M_{k-1}\right\|_X^p\right]}{(2^{p-1}-1)K_p(X)^p}.
\]
\end{theorem}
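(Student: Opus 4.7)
The plan is to obtain Theorem~\ref{thm:pisier ineq} by iterating the conditional version of Lemma~\ref{lem:from Ball}, exactly along the lines that are sketched in the paragraph just before the theorem statement. I will spell out the three ingredients and the induction, and verify that the one technical point (conditioning inside Ball's lemma) is legitimate.

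\textbf{Step 1: Conditional form of Ball's inequality.} First I would observe that Lemma~\ref{lem:from Ball} has an entirely formal conditional analogue: for any sub-$\sigma$-algebra $\mathcal{F}\subseteq \mathcal{F}_\infty$ and any $U\in L_p(X)$,
\begin{equation*}
\left\|\E[U\mid\mathcal{F}]\right\|_X^p+\frac{\E\left[\left.\left\|U-\E[U\mid\mathcal{F}]\right\|_X^p\right|\mathcal{F}\right]}{(2^{p-1}-1)K_p(X)^p}\le \E\bigl[\left.\|U\|_X^p\right|\mathcal{F}\bigr].
\end{equation*}
This is proved by repeating the proof of Lemma~\ref{lem:from Ball} with the unconditional expectations replaced by conditional ones on $\mathcal{F}$: the point-wise two-point inequality~\eqref{eq:two point convex} is unchanged, and the definition~\eqref{eq:def theta} of $\theta$ is replaced by an almost-sure variant, which goes through because the argument only uses linearity of expectation and the point-wise convexity inequality applied to $\frac12 V_0\pm\frac12\E[V_0\mid\mathcal{F}]$.

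\textbf{Step 2: One step of the martingale.} Apply the conditional inequality of Step~1 with $\mathcal{F}=\mathcal{F}_{n-1}$ and $U=M_n-M_0$. Since $M_0$ is $\mathcal{F}_{n-1}$-measurable and $\{M_k\}$ is a martingale, $\E[M_n-M_0\mid\mathcal{F}_{n-1}]=M_{n-1}-M_0$, and consequently $U-\E[U\mid\mathcal{F}_{n-1}]=M_n-M_{n-1}$. This yields precisely the inequality~\eqref{eq:pisier iteration} that already appears in the excerpt:
\begin{equation*}
\E\bigl[\left.\|M_n-M_0\|_X^p\right|\mathcal{F}_{n-1}\bigr]\ge \|M_{n-1}-M_0\|_X^p+\frac{\E\bigl[\left.\|M_n-M_{n-1}\|_X^p\right|\mathcal{F}_{n-1}\bigr]}{(2^{p-1}-1)K_p(X)^p}.
\end{equation*}
Taking unconditional expectations and using the tower property produces the recursion
\begin{equation*}
\E\left[\|M_n-M_0\|_X^p\right]\ge \E\left[\|M_{n-1}-M_0\|_X^p\right]+\frac{\E\left[\|M_n-M_{n-1}\|_X^p\right]}{(2^{p-1}-1)K_p(X)^p}.
\end{equation*}

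\textbf{Step 3: Iteration.} Now I would iterate the recursion from Step~2 downward: applying the same argument to the truncated martingale $\{M_k\}_{k=0}^{n-1}$ bounds $\E[\|M_{n-1}-M_0\|_X^p]$ from below by $\E[\|M_{n-2}-M_0\|_X^p]+\E[\|M_{n-1}-M_{n-2}\|_X^p]/\bigl((2^{p-1}-1)K_p(X)^p\bigr)$, and so on. After $n$ steps the $\E[\|M_0-M_0\|_X^p]$ term vanishes and we are left with
\begin{equation*}
\E\left[\|M_n-M_0\|_X^p\right]\ge \frac{1}{(2^{p-1}-1)K_p(X)^p}\sum_{k=1}^n \E\left[\|M_k-M_{k-1}\|_X^p\right],
\end{equation*}
which is the desired inequality. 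The only non-trivial issue in the whole argument is the conditional version of Ball's lemma in Step~1; the remaining work is just bookkeeping with the martingale property and the tower rule.
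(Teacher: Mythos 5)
Your proposal is correct and follows essentially the same route as the paper: the paper likewise applies the (conditional form of) Lemma~\ref{lem:from Ball} to $U=M_n-M_0$ with respect to $\mathcal{F}_{n-1}$, obtaining exactly~\eqref{eq:pisier iteration}, then takes expectations and iterates. Your Step~1, making explicit that Ball's inequality conditionalizes, is a point the paper leaves implicit, but it is the same argument.
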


We shall also need the following variant of Pisier's inequality:

\begin{corollary}\label{coro L_2(X)}
For every $p,q\in (1,\infty)$ there is a constant $c(p,q)\in
(0,\infty)$ with the following properties. Let $X$ be a normed space
with $K_p(X)<\infty$. Then for every martingale
$\{M_k\}_{k=0}^n\subseteq X$, if $p\le q$, then
\begin{equation}\label{eq:easy figiel}
\E \left[\left\|M_n-M_0\right\|_X^q\right]\ge
\frac{c(p,q)}{K_p(X)^p}\sum_{k=1}^n\E\left[\left\|M_k-M_{k-1}\right\|_X^q\right].
\end{equation}
and if  $p\ge q$, then
\begin{equation}\label{eq:hard figiel}
\E \left[\left\|M_n-M_0\right\|_X^q\right]\ge
\frac{c(p,q)}{n^{1-\frac{q}{p}}K_p(X)^q}\sum_{k=1}^n\E\left[\left\|M_k-M_{k-1}\right\|_X^q\right].
\end{equation}
\end{corollary}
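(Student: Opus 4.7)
The plan is to deduce both inequalities from Pisier's martingale inequality (Theorem~\ref{thm:pisier ineq}), which already settles the case $p=q$. For general exponents the two extrapolations are of different flavors, and I would handle them separately.

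For the case $p\le q$ in \eqref{eq:easy figiel}, the plan is to upgrade Ball's self-improvement argument from the proof of Lemma~\ref{lem:from Ball} to $q$-th moments. Specifically, I would define
\[
\theta_q=\inf\Bigl\{\frac{\E\|V\|_X^q-\|\E V\|_X^q}{\E\|V-\E V\|_X^q}:V\in L^q(X),\;\E\|V-\E V\|_X^q>0\Bigr\}
\]
in analogy with \eqref{eq:def theta}, and establish $\theta_q\ge c(p,q)/K_p(X)^p$. Starting from $\phi>\theta_q$ and a witness $V_0$, the plan is to apply the pointwise two-point $p$-convexity inequality \eqref{eq:two point convex} (which carries exactly one factor of $K_p^{-p}$) to $x=(V_0+\E V_0)/2$ and $y=(V_0-\E V_0)/2$, take expectations, and then repackage the $p$-th moment estimate into a $q$-th moment one through the convexity and superadditivity of $t\mapsto t^{q/p}$ (valid because $q/p\ge 1$). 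The $K_p^{-p}$ scaling (rather than $K_p^{-q}$) persists because \eqref{eq:two point convex} is invoked only once inside Ball's recursion. The resulting one-step estimate $\|\E U\|_X^q+(c(p,q)/K_p^p)\E\|U-\E U\|_X^q\le\E\|U\|_X^q$ then iterates along the martingale filtration exactly as in the proof of Theorem~\ref{thm:pisier ineq} and yields \eqref{eq:easy figiel}.

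For the case $p\ge q$ in \eqref{eq:hard figiel}, the plan is to combine Pisier with H\"older and Jensen. Pisier gives
$\sum_k\E\|d_k\|_X^p\le(2^{p-1}-1)K_p(X)^p\E\|M_n-M_0\|_X^p$. Applying H\"older's inequality $\sum a_k\le n^{1-q/p}(\sum a_k^{p/q})^{q/p}$ to $a_k=\E\|d_k\|_X^q$, together with Jensen's inequality $(\E\|d_k\|_X^q)^{p/q}\le\E\|d_k\|_X^p$ (valid since $p/q\ge 1$), gives
\[
\sum_{k=1}^n\E\|d_k\|_X^q\le n^{1-q/p}\bigl((2^{p-1}-1)K_p(X)^p\bigr)^{q/p}\bigl(\E\|M_n-M_0\|_X^p\bigr)^{q/p}.
\]
It remains to convert $(\E\|M_n-M_0\|_X^p)^{q/p}$ into a constant multiple of $\E\|M_n-M_0\|_X^q$. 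The naive Jensen inequality yields $(\E\|M_n-M_0\|_X^p)^{q/p}\ge\E\|M_n-M_0\|_X^q$, which is the wrong direction. To bridge this gap I would apply Pisier to the stopped martingale $\{M_{k\wedge T}\}$, where $T=\min\{k:\|M_k-M_0\|_X\ge\lambda\}$, so that the stopped displacement is bounded by $\lambda+\|d_T\|_X$, and split the $q$-th moment sum according to whether $T\le n$ or $T>n$. Choosing $\lambda$ of order $(\E\|M_n-M_0\|_X^q)^{1/q}$ balances the two contributions and absorbs the mismatch between the $p$-th and $q$-th moments, producing \eqref{eq:hard figiel}.

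The technical heart of the argument is obtaining the correct powers of $K_p(X)$ in each case: $K_p^{-p}$ for $p\le q$, which requires the Ball-style $q$-moment recursion rather than a direct extrapolation from Pisier, and $K_p^{-q}$ together with the $n^{1-q/p}$ factor for $p\ge q$, which requires the stopping-time truncation rather than a naive reverse-Jensen substitution. I expect the $p\ge q$ case to be the main obstacle, because the natural chain of inequalities from Pisier produces $(\E\|M_n-M_0\|_X^p)^{q/p}$ on the right-hand side, and the gap between this and $\E\|M_n-M_0\|_X^q$ is exactly the factor $n^{1-q/p}$ appearing in the conclusion: closing it requires a careful choice of the truncation level $\lambda$.
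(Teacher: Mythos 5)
Your reduction of the case $p\ge q$ to Pisier's inequality at exponent $p$ cannot be repaired by the proposed stopping-time truncation, because the quantity $\left(\E\|M_n-M_0\|_X^p\right)^{q/p}$ that your chain produces is not within a bounded (or even $n$-dependent) factor of $\E\|M_n-M_0\|_X^q$. Already for $n=1$ and $X=\R$, take $M_1-M_0=Z$ with $Z=\pm m^{1/p}$ each with probability $\frac{1}{2m}$ and $Z=0$ otherwise: then $(\E|Z|^p)^{q/p}=1$ while $\E|Z|^q=m^{q/p-1}\to 0$, so the reverse-Jensen step fails by an arbitrarily large factor, and the conclusion~\eqref{eq:hard figiel} (which here is essentially an identity) cannot be reached through that intermediate quantity. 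Stopping at $T=\min\{k:\|M_k-M_0\|_X\ge\lambda\}$ does not help in this example --- the overshoot $\|d_T\|_X$ is exactly the heavy-tailed increment you are trying to control --- and in general your sketch leaves both the term $\E\bigl[\|d_T\|_X^p\1_{\{T\le n\}}\bigr]$ and the increments occurring after time $T$ uncontrolled. The paper's proof avoids ever forming a $p$-th moment of the martingale: by Figiel's theorem $K_p(L_q(X))\le CK_p(X)$, so one applies the two-point inequality~\eqref{eq:two point convex} with exponent $p$ \emph{in the Banach space} $L_q(X)$ to the vectors $M_{n-1}-M_0+\frac12(M_n-M_{n-1})$ and $\frac12(M_n-M_{n-1})$; together with the martingale property this yields the recursion~\eqref{eq:before sum q} for the quantities $\left(\E\|M_k-M_0\|_X^q\right)^{p/q}$, and a single power-mean inequality $\sum_k a_k^{p/q}\ge n^{1-p/q}\left(\sum_k a_k\right)^{p/q}$ at the very end produces both the factor $n^{1-q/p}$ and the power $K_p(X)^q$.

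For $p\le q$ your plan is closer in spirit to the paper's, but the justification for the power $K_p(X)^{-p}$ is missing. Applying~\eqref{eq:two point convex} pointwise and passing to $q$-th powers by convexity/superadditivity of $t\mapsto t^{q/p}$ only yields the two-point inequality with exponent $q$ and constant $K_p(X)$, i.e.\ $K_q(X)\le K_p(X)$, which gives $K_p(X)^{-q}$ in~\eqref{eq:easy figiel} --- a strictly weaker bound since $K_p(X)\ge 1$; and if you instead take expectations before converting moments, the gain term $\E\|U-\E U\|_X^p$ cannot be bounded from below in terms of $\E\|U-\E U\|_X^q$ (Jensen goes the wrong way). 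The heuristic ``the two-point inequality is invoked only once in the recursion'' is not what produces the improvement. The correct mechanism, used in the paper, is normalization inside the modulus of convexity: $\delta_X(\e)\gtrsim \e^p/K_p(X)^p\ge 2^{p-q}\e^q/K_p(X)^p$ because $\e\le 2$, so $X$ has modulus of power type $q$ with constant $\gtrsim K_p(X)^{-p}$, whence $K_q(X)^q\lesssim_{p,q}K_p(X)^p$ by~\cite[Proposition 7]{BCL}; Theorem~\ref{thm:pisier ineq} at exponent $q$ then gives~\eqref{eq:easy figiel} directly.
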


\begin{proof}
\ifsodaelse{
  Assume first that $p\le q$. Observe that by the definition of $K_p(X)$
  for every $\e\in [0,2]$ we have
  \begin{multline*}
  \delta_X(\e)\ge 1-\left(1-\frac{\e^p}{2^pK_p(X)^p}\right)^{1/p}
  \ifsodaelse{\\}{}
  \ge
  \frac{\e^p}{p2^pK_p(X)^p}\ge \frac{2^{p-q}\e^q}{p2^pK_p(X)^p}.
  \end{multline*}
  By (the proof of) Proposition~7
  In~\cite{BCL}, it follows that $K_q(X)\le C K_p(X)^{p/q}$, where $C$
  depends only on $p,q$. Hence~\eqref{eq:easy figiel} follows from
  Theorem~\ref{thm:pisier ineq}.

  Assume next that $q\le p$.
}
{Here we only prove the case $q \le p$, since it is the only one needed in the sequel.}
  By a theorem of Figiel~\cite{Fiegel76} (combined
with~\cite[Proposition 7]{BCL}) we have $K_p(L_q(X))\le CK_p(X)$,
where $C$ depends only on $p,q$. We may therefore
apply~\eqref{eq:two point convex} to the vectors
\begin{align*}
x& =M_{n-1}-M_0+\frac{M_n-M_{n-1}}{2}\in L_q(X),\\ y &=\frac{M_n-M_{n-1}}{2}\in
L_q(X)
\end{align*}
 to get
\begin{multline}\label{eq:prove lin}
2\left(\E\left[\left\|M_{n-1}-M_0+\frac{M_n-M_{n-1}}{2}\right\|_X^q\right]\right)^{p/q}
 +\frac{2}{2^pCK_p(X)^p}\left(\E\left[\left\|M_n-M_{n-1}\right\|_X^q\right]\right)^{p/q}\\\le
\left(\E\left[\left\|M_n-M_{0}\right\|_X^q\right]\right)^{p/q}+\left(\E\left[\left\|M_{n-1}-M_{0}\right\|_X^q\right]\right)^{p/q}.
\end{multline}
Assume that $\{M_k\}_{k=0}^n\subseteq X$ is a martingale with
respect to the filtration $\mathcal{F}_0\subseteq
\mathcal{F}_1\subseteq \cdots\subseteq \mathcal{F}_{n-1}$. Then
\begin{equation*}
\E\left[\left\|M_{n-1}-M_{0}\right\|_X^q\right]
=\E\left[\left\|M_{n-1}-M_{0}+\E\left[\left.M_n-M_{n-1}\right|\mathcal{F}_{n-1}\right]\right\|_X^q\right]
\ifsodaelse{\\}{}
\le
\E\left[\left\|M_{n}-M_{0}\right\|_X^q\right],
\end{equation*}
and
\begin{multline*}
\E\left[\left\|M_{n-1}-M_{0}\right\|_X^q\right]
\ifsodaelse{\\}{}
=\E\left[\left\|M_{n-1}-M_{0}+\E\left[\left.\frac{M_n-M_{n-1}}{2}\right|\mathcal{F}_{n-1}\right]\right\|_X^q\right]\\\le
\E\left[\left\|M_{n-1}-M_0+\frac{M_n-M_{n-1}}{2}\right\|_X^q\right].
\end{multline*}
Thus~\eqref{eq:prove lin} implies that
\begin{equation}\label{eq:before sum q}
\left(\E\left[\left\|M_{n-1}-M_0\right\|_X^q\right]\right)^{p/q}+\frac{\left(\E\left[\left\|M_n-M_{n-1}\right\|_X^q\right]\right)^{p/q}}{2^pCK_p(X)^p}
 \le
\left(\E\left[\left\|M_n-M_{0}\right\|_X^q\right]\right)^{p/q}.
\end{equation}
Applying~\eqref{eq:before sum q} inductively we get the lower bound
\begin{multline*}
2^pCK_p(X)^p\left(\E\left[\left\|M_n-M_{0}\right\|_X^q\right]\right)^{p/q}
 \ge
\sum_{k=1}^n\left(\E\left[\left\|M_k-M_{k-1}\right\|_X^q\right]\right)^{p/q}
\ifsodaelse{\\}{}
\\\ge
\frac{1}{n^{\frac{p}{q}-1}}\left(\sum_{k=1}^n\E\left[\left\|M_k-M_{k-1}\right\|_X^q\right]\right)^{p/q},
\end{multline*}
which is precisely~\eqref{eq:hard figiel}.
\end{proof}

Note that Corollary~\ref{coro L_2(X)} specialized to $q=2$ is
precisely~\eqref{eq:pisier variant}.

\subsection{A metric space without uniform decay of the Poincar\'e constant}
\label{sec:no-decay} Observe that as long as a metric space
$(X,d_X)$ contains at least two points, the fact that
$\bpconst(A,d_X^p)<\infty$ implies that $A$ is ergodic, and
therefore $\lim_{t\to \infty }\bpconst(A^t,d_X^p)=\lim_{t\to \infty
}\bpconst(\A_t(A),d_X^p)=1$. However this weak decay is insufficient
for our purposes. For the iterative construction in this paper we
need at the very least the following type of {\em uniform decay} of
the Poincar\'e constant:
\begin{multline*}
 \forall M>1\, \exists t\in \mathbb N\, \exists s_0 \in [1,\infty)\,
 \forall n\in \N \ifsodaelse{\\}{\,}\forall A\in M_n(\R)
 \  \text{symmetric\ \&\  stochastic},\\
  \bpconst(A,d_X^p)\ge s_0 \implies
\bpconst(\A_t(A),d_X^p)\le \frac{\bpconst(A,d_X^p)}{M} .
\end{multline*}

Here we show that there exists a  metric space with no such uniform decay.
\begin{proposition}
There exists a metric space $(X,\rho)$, and  graphs
$\{G_n=(V_n,E_n)\}_{n=1}^\infty$,  where $|V_n|=n$, such that
$\lim_{n\to \infty}\bpconst(G_n,\rho^2)=\infty$, and for every $t\in
\mathbb N$, there exists $N_0$ such that for every $n>N_0$,
$\bpconst(\A_t(G_n),\rho^2)\ge \frac12\bpconst(G_n,\rho)$.
\end{proposition}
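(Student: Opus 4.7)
My plan is to exhibit an explicit counterexample $((X,\rho),\{G_n\}_n)$ satisfying both required properties. For the graphs I would take $G_n$ to be a simple bounded-degree family --- for concreteness, cycles $C_n$ with a self-loop at each vertex (to guarantee aperiodicity) --- and construct $(X,\rho)$ as a metric space specifically designed to defeat any Ces\`aro-type decay of the non-linear Poincar\'e constant.

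The verification would proceed in three steps. First, I would define $(X,\rho)$ explicitly and check that it is a metric space. A natural candidate is a disjoint union $X = \bigsqcup_n (X_n,\rho_n)$ of finite metric spaces with inter-component distances $\rho(x,y) = D_n + D_m$ for $x \in X_n$, $y \in X_m$, $n \neq m$, where $(D_n)_n$ is a sequence of ``scales'' diverging to $\infty$; triangle inequality is routine. Second, for each $n$ I would exhibit explicit witness functions $f_n,g_n : V(G_n) \to X$ and show
\[
\gamma_+(G_n,\rho^2)\ \ge\ \frac{\tfrac{1}{|V_n|^2}\sum_{u,v}\rho(f_n(u),g_n(v))^2}{\tfrac{1}{|V_n|}\sum_u\sum_v (A_{G_n})_{uv}\rho(f_n(u),g_n(v))^2}\ \longrightarrow\ \infty.
\]
Third, for the \emph{same} pair $(f_n,g_n)$, I would show that for every $t \in \mathbb N$ and every $n > N_0(t)$,
\[
\tfrac{1}{|V_n|}\sum_u\sum_v (A_{\A_t(G_n)})_{uv}\rho(f_n(u),g_n(v))^2\ \le\ 2 \cdot \tfrac{1}{|V_n|}\sum_u\sum_v (A_{G_n})_{uv}\rho(f_n(u),g_n(v))^2,
\]
which directly implies $\gamma_+(\A_t(G_n),\rho^2) \ge \tfrac12\,\gamma_+(G_n,\rho^2)$.

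The main obstacle is the third step. By Claim~\ref{cl:decay for gamma}, any metric space with metric Markov cotype $\frac{2}{\e}$ (in particular any super-reflexive Banach space, any $CAT(0)$ space, or more generally any space covered by the paper's framework) forces the Ces\`aro-averaged edge-sum to grow by at least a factor $\asymp t^{\e}$ relative to the original edge-sum; so $(X,\rho)$ has to lie outside all such classes. Moreover, the witness functions have to be arranged so that walks of length at most $t$ in $G_n$, under $(f_n,g_n)$, do not inflate the average squared distance by more than a factor of $2$ over single $G_n$-edges --- and this must hold uniformly in $t$.

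This is precisely why a disjoint-union construction is natural: if $f_n$ and $g_n$ are chosen to place $V(G_n)$ across several components of $X$, then both the pair-sum and the edge-sum inherit dominant contributions from a fixed inter-component scale $D_n + D_m$ that cannot be deformed by walks of any length in $G_n$ (since which component a vertex lies in depends only on the image point, not on the walk). The quantitative check then reduces to counting cross-component pairs versus cross-component walks of length $\le t$, and the combinatorial task is to design the partition of $V(G_n)$ induced by $f_n,g_n$ --- together with the scales $(D_n)$ --- so that the cross-component fraction of edge-weight under $A_{G_n}$ agrees, up to a factor of $2$, with the cross-component fraction under $A_{\A_t(G_n)}$ for every $t$, once $n$ is large enough. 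Threading this quantitative needle is the heart of the construction.
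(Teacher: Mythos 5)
Your plan has two genuine gaps, one logical and one structural, and in both places it misses the mechanism that makes the paper's example work.

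The logical gap is in your third step. Exhibiting a single witness pair $(f_n,g_n)$ with ratio $R_n\to\infty$ for $G_n$, and showing that the $\A_t$-edge-sum for that same pair is at most twice the $G_n$-edge-sum, only yields $\gamma_+(\A_t(G_n),\rho^2)\ge \tfrac12 R_n$. Since $\gamma_+(G_n,\rho^2)$ is a supremum over \emph{all} pairs $f,g$, this does not imply $\gamma_+(\A_t(G_n),\rho^2)\ge\tfrac12\gamma_+(G_n,\rho^2)$ unless you also prove a matching \emph{upper} bound $\gamma_+(G_n,\rho^2)\le R_n$ (up to the factor you are willing to lose). That upper bound is where the paper does its real work: it takes $G_n$ to be ordinary expanders, $X=\Z^{\aleph_0}\cap\ell_\infty$ with $\rho=\log(1+\|\cdot\|_\infty)$, and proves $\gamma_+(G_n,\rho^2)\lesssim(\log(1+\log n))^2$ for \emph{arbitrary} $f,g$ by composing Bourgain's embedding of the $2n$-point image set $(S,\|\cdot\|_\infty)$ into $\ell_2$ with a map realizing the metric transform $\log(1+\|\cdot\|_2)$ inside $\ell_2$, and then invoking the Euclidean Poincar\'e inequality of the expander. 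The lower bound for $\A_t(G_n)$ then comes from an isometric copy of the path metric of $\A_t(G_n)$ in $\ell_\infty$: edges have bounded $\rho$-length while half the pairs are at $\rho$-distance $\gtrsim\log(1+\frac{\log n}{t})$, and for fixed $t$ the ratio $(\log(1+\frac{\log n}{t}))^2/(\log(1+\log n))^2\to 1$. The whole point is that the double logarithm grows, but so slowly that dividing graph distances by $t$ (the effect of averaging over $\le t$-step walks) only subtracts $\log t$ from $\log\log n$.

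The structural gap is in the disjoint-union architecture itself. Once $f_n,g_n$ are fixed, component membership induces a partition of $V_n$, and the cross-component contribution to the edge-sum is (scale)$^2$ times the crossing weight of the partition under the relevant matrix. But for a connected bounded-degree graph and a partition into classes of diameter much larger than $t$, the crossing weight of $\A_t(A)=\frac1t\sum_{s<t}A^s$ is typically $\Theta(t)$ times that of $A$ (each power $A^s$ with $s\le t$ contributes crossing weight comparable to $s$ times that of $A$ near the boundary of the classes), not within a factor of $2$. So if the cross-component terms dominate both sums, step three fails by a factor of order $t$; and if they do not dominate, the construction reduces to the within-component problem you started with. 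Your choice of cycles makes matters worse: for $C_n$ the Poincar\'e constant genuinely does decay by a factor of $\asymp t$ under $\A_t$ in essentially any target (already for a two-point target, via the cut argument), so the metric space would have to fight against the graph rather than exploit it. The paper sidesteps all of this by never localizing the witness geometry: the obstruction lives in the global distortion bound $(\log\log n)^2$, which is insensitive to $t$.
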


\begin{proof}
Let $G_n=(V_n,E_n)$ be an arbitrary family of constant degree
expanders, i.e., $G_n$ is a degree $d$ (say $d=4$), $n$-vertex graph
satisfying $\bpconst(G_n,\|\cdot\|_2^2)\le C$, for some $C>0$. Let
$(X,\rho)$ be the metric space defined as follows: as a set,
 $X=\ell_\infty \cap \mathbb Z^{\aleph_0}$, i.e., the set of all finite integer-valued sequences. The metric $\rho$
 on $X$ is given by  $\rho(x,y)= \log(1+\|x-y\|_\infty)$. Note that $\rho$ is indeed
 a metric since $T(s)= \log(1+s)$ is concave, increasing and $T(0)=0$ (i.e., $\rho=T\circ
 \|\cdot\|_\infty$ is a metric transform of the $\ell_\infty$ norm).

 First we prove that $\bpconst(G_n,\rho^2)\lesssim (\log(1+\log n))^2$.
 Let $f,g:G_n\to X$ be  arbitrary mappings. Our goal is to prove that
 \begin{equation*}
  \frac1{n^2} \sum_{u,v\in V} \rho(f(u),g(v))^2
  \ifsodaelse{\\}{}
  \lesssim \frac{\left(\log (1+\log n)\right)^2}{nd}\sum_{\{u,v\}\in E} \rho(f(u),f(v))^2 .
  \end{equation*}
Write $S=f(V)\cup g(V)\subseteq\mathbb Z^\infty$. Let
$B:(S,\|\cdot\|_\infty)\to \ell_2$ be the Bourgain
embedding~\cite{Bourgain-embed}
 of $S$ equipped the $\ell_\infty$ norm into $\ell_2$, i.e., for every $u,v\in
 V$,
  \begin{equation} \label{eq:limitations-1}
  \|f(u)-g(v)\|_\infty \le \|B(f(u))-B(g(v))\|_2
  \ifsodaelse{\\}{}
  \le c(1+\log n)  \|f(u)-g(v)\|_\infty .
  \end{equation}
  for some constant $c\ge 1$.

  We next apply the metric transform $T(x)=\log (1+x)$ to~\eqref{eq:limitations-1}, and obtain
  for $f(u)\ne g(v)$,
  \begin{align}
\nonumber  \rho \ifsodaelse{&}{} (f(u),g(v))
\ifsodaelse{\\ \nonumber}{}
& = T(\|f(u)-g(v)\|_\infty)\\
&\le T\left(\|B(f(u))-B(g(v))\|_2\right)\nonumber \\  \nonumber
\nonumber   & \le \log \left(1+ c(1+\log n) \|f(u)-g(v)\|_\infty \right) \\
\nonumber   &  \le \log(c(1+\log n)) + \log(2\|f(u)-g(v)\|_\infty) \\
  &\lesssim \log(1+\log n) \cdot \rho(f(u),g(v)).
  \label{eq:limitations-2}
  \end{align}
  The second and last inequalities above follow from the fact that $\|f(u)-g(v)\|_\infty \ge 1$.

  As was shown in~\cite[Remark~5.4]{MN-quotients}, there exists a universal
  constant $c'>1$ and a mapping $\phi:\ell_2\to \ell_2$ such that
  for  all $x,y\in \ell_2$ we have $T\left(\|x-y\|_2\right)\le
  \|\phi(x)-\phi(y)\|_2\le c'T\left(\|x-y\|_2\right)$. This fact, together with~\eqref{eq:limitations-2}, implies that
  the mapping $\psi=\phi\circ B:S\to \ell_2$ satisfies:
  \begin{equation*}
   \rho(f(u),g(v)) \le \|\psi(f(u))-\psi(g(v))\|_2
   \ifsodaelse{\\}{}
   \lesssim \log(1+\log n) \cdot \rho(f(u),g(v)) ,
   \end{equation*}
  for every $u,v\in V$.
  Applying the Poincar\'e inequality of $G$ in $\ell_2$ we conclude that
  \begin{multline*}
  \frac1{n^2} \ifsodaelse{&}{} \sum_{u,v\in V} \rho(f(u),g(v))^2
  \ifsodaelse{\\}{}
    \le
  \frac1{n^2} \sum_{u,v\in V} \|\psi(f(u))-\psi(g(v))\|_2^2 \displaybreak[1]\\
   \lesssim
   \frac{1}{nd}\sum_{(u,v)\in E} |\psi(f(u))-\psi(g(v))\|_2^2
   \lesssim \frac{(\log(1+\log n))^2}{nd}\sum_{(u,v)\in E}  \rho(f(u),g(v))^2.
  \end{multline*}
  This proves that $\bpconst(G_n,\rho^2)\lesssim (\log (1+\log n))^2$.

Next, we show a lower bound on $\bpconst(\A_t(G),\rho)$. For this
purpose it is sufficient to examine a specific embedding of
$\A_t(G)$ in $X$. Let $g=f:V\to \mathbb Z^{\aleph_0}$ which is an
isometric embedding of the shortest path metric on $\A_t(G)$ into
$(\mathbb Z^{\aleph_0},\|\cdot\|_\infty)$. In this case for every
$\{u,v\}\in E(\A_t(G))$,
$\rho(f(u),f(v))=T(\|f(u)-f(v)\|_\infty)=T(1)=1$. On the other hand,
since the degree of $\A_t(G)$ is $td^t$, at least half of the pairs
in $V\times V$ are at distance $\gtrsim \frac{\log n}{t \log d}$ in
the shortest path metric metric on $\A_t(G)$, and hence their images
under $f$ are at $\rho$-distance $\ge\log\left(1+c''\frac{\log n}{t
\log d}\right)$, where $c''>0$ is a universal constant. Hence,
provided $t\le c'''\log n$ for some sufficiently small constant
$c'''$, we have $\bpconst(\A_t(G),\rho^2) \gtrsim\left(\log
\left(1+\frac{\log n}{t}\right)\right)^2$.
\end{proof}

\subsection{Construction of the base graph}
\label{sec:base-details}

\ifsodaelse{}{
\begin{proof}[Proof of Claim~\ref{cl:real-beckner}]
The claim easily follows by computing the right hand side  in~\eqref{eq:real-beckner-b} when  $f=W_A$. We write
$x=x_Ax_{A^c}$, and
\begin{align*}
 \sum_{y\in\,\mathbb F_2^n}
 \ifsodaelse{\!\!\!\!}{}
 &
 \ifsodaelse{\,\,\,\,}{}
 \left( \ifsodaelse{\tfrac}{\frac}{1-e^{-t}}2 \right)^{\|x-y\|_1} \left( \ifsodaelse{\tfrac}{\frac}{1+e^{-t}}2 \right)^{n-\|x-y\|_1}
{\prod}_{i\in A}(-1)^{y_i}
\\ &   =
\ifsodaelse{\!\!\!\!}{}  \sum_{y_A\in\, \mathbb F_2^A}
  \ifsodaelse{\!\!\!}{}
  \left( \ifsodaelse{\tfrac}{\frac}{1-e^{-t}}2 \right)^{\|x_A-y_A\|_1}
  \ifsodaelse{\!\!}{}
  \left( \ifsodaelse{\tfrac}{\frac}{1+e^{-t}}2 \right)^{|A|-\|x_A-y_A\|_1}
\prod_{i\in A}(-1)^{y_i}
  \\ &  \qquad \ifsodaelse{\ }{\qquad} \cdot \ifsodaelse{\!\!\!\!\!\!\!\!\!}{}
  \sum_{y_{A^c}\in\, \mathbb F_2^{[n]\setminus A}}
  \ifsodaelse{\!\!\!}{}
  \left( \ifsodaelse{\tfrac}{\frac}{1-e^{-t}}2 \right)^{\|x_{A^c}-y_{A^c}\|_1}
  \ifsodaelse{\!\!}{}
  \left( \ifsodaelse{\tfrac}{\frac}{1+e^{-t}}2 \right)^{n-|A|-\|x_{A^c}-y_{A^c}\|_1}
  \\ & = \sum_{\ell=0}^{|A|} \ifsodaelse{\tbinom}{\binom}{|A|}{\ell} \left( \ifsodaelse{\tfrac}{\frac}{1-e^{-t}}2 \right)^\ell
  \left( \ifsodaelse{\tfrac}{\frac}{1+e^{-t}}2 \right)^{|A|-\ell} (-1)^\ell W_A(x)
  \\ &= e^{-t|A|} W_A(x).  \qedhere
\end{align*}
\end{proof}
}

\begin{proof}[Proof of Proposition~\ref{prop:linear=>nonlin}]
Let $f,g:[N]\to X$, denote by $\bar f=\Avg f$, and $\bar g=\Avg g$,
$\widetilde f= f- \bar f$, $\widetilde g= g- \bar g$. Then
$\sum_{i=1}^N \widetilde f(i)=\sum_{i=1}^N \widetilde g(i)=0$, and
so by the definition of $\lambda$, $\|A \widetilde f\|_{L_p(X)} \le
\lambda \| \widetilde f\|_{L_p(X)}$, and $\|A \widetilde
g\|_{L_p(X)} \le \lambda \| \widetilde g\|_{L_p(X)}$. Hence

\begin{multline*}
\|\left( \begin{smallmatrix} 0 & A\\ A & 0 \end{smallmatrix}
\right)(\widetilde f \oplus \tilde g)\|_{L_p(X)}^p =
\frac{\|A\widetilde g\|_{L_p(X)}^p + \|A\widetilde f\|_{L_p(X)}^p}{2} \\
\le \lambda^p \frac{\|\widetilde f\|_{L_p(X)}^p + \|\widetilde
g\|_{L_p(X)}^p}{2} = \lambda^p \|\widetilde f\oplus \widetilde
g\|_{L_p(X)}^p .
\end{multline*}
It follows that,
\begin{equation*}
 (1-\lambda) \|\widetilde f \oplus \widetilde g\|_{L_p(X)}
 \ifsodaelse{\\}{}
 \le \|\widetilde f \oplus \widetilde g\|_{L_p(X)} - \|\left(\begin{smallmatrix} 0 & A\\A & 0 \end{smallmatrix}\right)(\widetilde f\oplus \widetilde g)\|_{L_p(X)}
 \le \left\|\left(\begin{smallmatrix}I &-A\\ -A & I \end{smallmatrix} \right) (\widetilde f \oplus \widetilde g)\right\|_{L_p(X)}. \end{equation*}
So,
\begin{align*}
\frac{1}{N^2}
\ifsodaelse{&}{}
 \sum_{i,j\in [N]}
\ifsodaelse{}{&}
  \|f(i)- g(j)\|_X^p \le 2^{p-1}\|\bar f - \bar g\|_{X}^p +
 \frac{2^{p-1}}{N^2} \sum_{i,j\in [N]} \|\widetilde f(i)- \widetilde g(j)\|_X^p \displaybreak[1]\\
& \le 2^{p-1}\|\bar f - \bar g\|_{X}^p + 4^{p-1} ( \|\widetilde f\|_{L_p(X)}^p+
\|\widetilde g\|_{L_p(X)}^p)\\
&= 2^{p-1}\|\bar f - \bar g\|_{X}^p + 2^{2p-1} \|\widetilde f \oplus \widetilde g\|_{L_p(X)}^p \displaybreak[1]\\
&\le 2^{p-1}\|\bar f - \bar g\|_{X}^p
\ifsodaelse{\\&\quad}{}
+ 2^{2p-1} (1-\lambda)^{-p} \left\|\left(\begin{smallmatrix}I &-A\\ -A & I \end{smallmatrix} \right) (\widetilde f\oplus \widetilde g)\right\|_{L_p(X)}^p.
\end{align*}
Expanding the last term,
\begin{align*}
\Bigl\|& \left(\begin{smallmatrix}I &-A\\ -A & I \end{smallmatrix} \right)  (\widetilde f\oplus \widetilde g)\Bigr\|_{L_p(X)}^p \\
&= \frac{1}{2N} \sum_{i\in [N]} \biggl \| \sum_{j\in [N]} A_{ij} \left(\widetilde f(i)-\widetilde g(j) \right) \biggr\|_X^p
\ifsodaelse{\\ & \quad}{}
+ \frac{1}{2N}  \sum_{i\in [N]} \biggl \| \sum_{j\in [N]} A_{ij} \left(\widetilde g(i)-\widetilde f(j) \right) \biggr\|_X^p
\\
\ifsodaelse{}{
  &\le \frac1N  \sum_{i,j\in [N]} A_{ij} \left \|  \widetilde f(i)-\widetilde g(j) \right\|_X^p \\
}
& \le 2^{p-1}\|\bar f - \bar g\|_X^p+ \frac{2^{p-1}}{N} \sum_{i,j\in [N]} A_{ij}  \|   f(i)- g(j) \|_X^p.
\end{align*}

We next observe that for any permutation $\pi:[N]\to [N]$ (and $p\ge 1$),
\( \|\bar f - \bar g\|^p_X \le \frac1N\sum_i \|f(i)-g(\pi(i))\|^p_X .\)
Since $A$ is symmetric stochastic, it is in the convex hull of the permutation matrices, so we can write
$A=\sum_\ell \alpha ^\ell \Pi_\ell$,  where $\{\Pi_\ell\}_\ell$ are permutation matrices, and $\alpha_\ell \ge 0$, $\sum_\ell \alpha_\ell=1$.
Hence,
\begin{equation*}
 \|\bar f -\bar g\|^p_X\le \sum_{\ell} \alpha_\ell \frac1N \sum_{i,j} \Pi^\ell_{ij} \|f(i)-g(j)\|^p_X
\ifsodaelse{\\}{}
 =
\frac1N\sum_{i,j} A_{ij} \|f(i)-g(j)\|^p_X .
\end{equation*}

We conclude that,
\begin{equation*}
 \frac1{N^2} \sum_{i,j\in [N]} \|f(i)- g(j)\|_X^p
 \ifsodaelse{\\}{}
 \le
\frac{8^p (1-\lambda)^{-p}}{N}  \sum_{i,j\in[N]} A_{ij} \|f(i)-g(j)\|_X^p. \qedhere
\end{equation*}
\end{proof}

\subsubsection{Proof of Lemma~\ref{lem:bounding-beckner}}
\label{sec:proof-of-bounding-beckner}

The {\em Rademacher projection} of $f$ is defined by
\[
R_1 f=\sum_{j=1}^n \widehat f(A)W_{\{j\}}.
\]
The $K$-convexity constant of $X$, denoted $K(X)$, is the smallest
constant $K$ such that for every $n$ and every $f:\F_2^n\to
X$,
\ifsodaelse{\(}{\[}
 \|(R_1 f)\|_{L_2(X)}^2  \le K^2  \|f\|_{L_2(X)}^2.
\ifsodaelse{\)}{\]}
We also define the $k$th level Rademacher projection by
\[ R_k f= \sum_{\substack{A\subseteq [n]\\ |A|=k}} \hat f(A) W_A .\]


We assume throughout that $X$ is a complex Banach space. The case of
Banach spaces over $\R$ follows formally from this case by
complexification. Extend the heat semi-group $T_z:L_p(X) \to L_p(X)$
to $z\in \mathbb C$,  by setting:
\[ T_z f = \sum_{A\subseteq [n]} e^{-z|A|} \hat f(A) W_A .\]
Another way to write it is $T_z=\sum_{k=0}^n e^{-zk} R_k$. We shall
use the following deep theorem of Pisier\cite{Pisier-K-convex}:

\begin{theorem}[{\cite{Pisier-K-convex}}]\label{thm:Pis K-conv}
For $K$-convex $X$, and $p\in(1,\infty)$, there exists $\phi,M>$
such that if $|\arg z|<\phi$ then $\|T_z\|_{L_p(X)\to L_p(x)}\le M$.
\end{theorem}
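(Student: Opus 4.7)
My plan is to reduce the claim to uniform analytic continuation of an operator-valued generating function, and then use the $K$-convexity hypothesis to drive that continuation.

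Consider the operator-valued polynomial $\Pi(w)=\sum_{k=0}^n w^k R_k$ acting on $L_p(X)$. For real $w\in[-1,1]$, the operator $\Pi(w)$ coincides with the noise operator with parameter $w$: the same computation as in Claim~\ref{cl:real-beckner}, applied with $w$ in place of $e^{-t}$, shows that $\Pi(w)f$ is a convex combination of point-evaluations of $f$ (the weights $\bigl(\tfrac{1-w}{2}\bigr)^{\|x-y\|_1}\bigl(\tfrac{1+w}{2}\bigr)^{n-\|x-y\|_1}$ are non-negative for all $w\in[-1,1]$), so $\|\Pi(w)\|_{L_p(X)\to L_p(X)}\le 1$. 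Moreover $\Pi(0)=\E$ and $\Pi'(0)=R_1$, so the $K$-convexity hypothesis gives the first-order bound $\|R_1\|_{L_p(X)\to L_p(X)}\le K(X,p)$.

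The next step is to upgrade these boundary bounds to a uniform bound $\|\Pi(w)\|_{L_p(X)\to L_p(X)}\le M$ on an open complex neighborhood $U\supseteq[0,1]$. I would do this by a vector-valued Phragm\'en--Lindel\"of / three-lines argument applied to $\Pi$ inside a suitable lens-shaped domain around $[0,1]$, combined with tensorization to keep the bound dimension-free (the $K$-convexity constant is stable under $\ell_p$-tensor products, and the Walsh system on $\F_2^n$ tensorizes cleanly over the one-dimensional Walsh system). Once such a $U$ is in hand, the substitution $w=e^{-z}$ maps a sector $\{|\arg z|<\phi\}$ into $U$ for small enough $\phi>0$, yielding $\|T_z\|_{L_p(X)\to L_p(X)}\le M$ as claimed.

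The main obstacle is the interpolation step: converting a first-order bound on $R_1$ and the trivial $L^\infty$-type bound on the interval $[-1,1]$ into a genuine two-dimensional neighborhood bound. Implemented directly, this requires polynomial-in-$k$ control on $\|R_k\|_{L_p(X)\to L_p(X)}$ so that the series defining $\Pi(w)$ converges uniformly on $U$; producing such intermediate bounds on the Rademacher projections is the deep harmonic-analytic part of the argument and the point at which $K$-convexity is used most essentially. An alternative route would be to establish a resolvent bound $\|(\lambda I+\Delta)^{-1}\|_{L_p(X)\to L_p(X)}\lesssim 1/|\lambda|$ for $\lambda$ in a sector disjoint from the spectrum $\{0,1,\dots,n\}$ of $\Delta$ and then invoke standard sectorial-operator theory, but this appears to require the same intermediate $R_k$ estimates.
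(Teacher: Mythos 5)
There is a genuine gap here, and it sits exactly at the step you flag as ``the main obstacle.'' First, for context: the paper does not prove this statement at all --- it is quoted verbatim as a deep theorem of Pisier (from his work on holomorphic semigroups and $K$-convexity) and used as a black box; the only thing the paper derives \emph{from} it is the bound $\|R_k\|_{L_p(X)\to L_p(X)}\le Me^{ak}$ via a contour integral. Your sketch runs in the opposite direction: you propose to prove the sectorial bound from polynomial control on $\|R_k\|$, which is essentially equivalent in difficulty to the theorem itself, and you do not supply it.

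More specifically, the Phragm\'en--Lindel\"of / three-lines mechanism you propose cannot be run with the data you have. A maximum-principle argument on a lens-shaped domain $U\supseteq[0,1]$ requires a priori norm bounds on the \emph{two-dimensional boundary} $\partial U$, which lies off the real axis; all you have is contractivity of $\Pi(w)$ for real $w\in[-1,1]$ together with the single operator bound $\|R_1\|\le K$. That input is far too weak: for every Banach space $X$ one has $\|T_t\|\le 1$ for real $t\ge 0$, yet for non-$K$-convex $X$ (e.g.\ $X=\ell_1$, uniformly over $n$) the operators $T_z$ are \emph{not} uniformly bounded on any sector. So no argument that uses only real-line contractivity plus finitely many derivative bounds at a point can succeed; $K$-convexity has to enter globally, across all dimensions and all Fourier levels at once. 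Pisier's actual proof exploits the semigroup law $T_sT_t=T_{s+t}$ in an iterative bootstrapping scheme to control all the $R_k$ simultaneously --- a structure your sketch never invokes in the essential step. As written, the proposal reduces the theorem to an unproved claim that is the theorem's entire content, so it does not constitute a proof.
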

See~\cite{Mau03} for bounds on $\phi, M$ in terms of $K(X)$. We will
require the following standard corollary of Theorem~\ref{thm:Pis
K-conv}:

\begin{corollary}
 $\|R_k\|_{L_p(X)\to L_p(X)} \le M e^{ak}$.
\end{corollary}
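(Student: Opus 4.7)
The plan is to recover $R_k$ from the family $\{T_z\}_z$ by a Cauchy integral, using Pisier's sectorial boundedness to control the integrand. Concretely, since every $f:\F_2^n\to X$ has a finite Fourier expansion, the identity $T_z=\sum_{k=0}^{n} e^{-zk}R_k$ says that, as an operator on $L_p(X)$, $T_z$ is a polynomial of degree $n$ in the single variable $w:=e^{-z}$. Denote this polynomial by $\widetilde T(w):=\sum_{k=0}^{n} w^k R_k$; it is an entire operator-valued function of $w$.

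Applying the Cauchy integral formula (justified in the operator-valued setting by pairing with a vector $f\in L_p(X)$ and a functional $\varphi\in L_p(X)^*$, which reduces matters to the scalar case) yields, for every $r>0$,
\[
R_k \;=\; \frac{1}{2\pi i}\oint_{|w|=r} \frac{\widetilde T(w)}{w^{k+1}}\,dw, \qquad \text{and hence}\qquad \|R_k\|_{L_p(X)\to L_p(X)} \;\le\; r^{-k}\sup_{|w|=r}\bigl\|\widetilde T(w)\bigr\|_{L_p(X)\to L_p(X)}.
\]

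It remains to choose $r$ so that the circle $\{|w|=r\}$ lies inside the image of Pisier's sector under the map $z\mapsto e^{-z}$. Parametrizing $w=re^{i\theta}$ with $\theta\in(-\pi,\pi]$, we have $z=-\log w=(-\log r)-i\theta$, so $\mathrm{Re}(z)=-\log r$ and $|\arg z|\le \arctan(\pi/(-\log r))$ whenever $0<r<1$. Fixing any $a>\pi/\tan\phi$ and setting $r:=e^{-a}$ then forces $|\arg z|<\phi$ uniformly on the contour, so Theorem~\ref{thm:Pis K-conv} gives $\|\widetilde T(w)\|_{L_p(X)\to L_p(X)}=\|T_z\|_{L_p(X)\to L_p(X)}\le M$. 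Substituting into the Cauchy estimate yields $\|R_k\|_{L_p(X)\to L_p(X)}\le M e^{ak}$, as claimed.

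There is no real obstacle in this argument; the only point worth emphasizing is the change of variable $w=e^{-z}$, which converts Pisier's bound on a sector in $z$-space into a bound on a circle in $w$-space, after which the standard Cauchy coefficient estimate finishes the job. The freedom to take any $a>\pi/\tan\phi$ also explains why the resulting constant in the exponent depends only on the sectorial data $(\phi,M)$ supplied by Theorem~\ref{thm:Pis K-conv}, and therefore only on $K(X)$ and $p$.
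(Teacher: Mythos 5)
Your proof is correct and is essentially the paper's argument in different clothing: the Cauchy coefficient formula for $\widetilde T(w)=\sum_k w^kR_k$ on the circle $|w|=e^{-a}$, parametrized as $w=e^{-(a-i\theta)}$, is exactly the paper's identity $\frac{1}{2\pi}\int_{-\pi}^{\pi}e^{ikt}T_{a+it}\,dt=e^{-ka}R_k$, followed by the same norm estimate via Theorem~\ref{thm:Pis K-conv} on the vertical segment $\Re z=a$ (the paper takes $a=\pi/\tan\phi$ rather than $a>\pi/\tan\phi$, which is immaterial).
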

\begin{proof}
Define $a=\frac{\pi}{\tan \phi}$, so that all the points in the
segment $\{(a,iy):\; y\in (-\pi,\pi)\}$ have argument at most
$\phi$. Calculate,
\begin{multline*}
\frac{1}{2\pi} \int_{-\pi}^\pi e^{ikt} T_{a+it} dt = \frac{1}{2\pi}
\int_{-\pi}^\pi e^{ikt} \sum_{r=0}^n e^{-(a+it)r} R_r dt
\\
= \frac{1}{2\pi} \sum_{r=0}^n e^{-ra}R_r \int _{-\pi}^\pi e^{i(k-r)t}dt
= e^{-ka} R_k,
\end{multline*}
and therefore, by convexity,
\begin{equation*}
 \|R_k\|_{L_p(X)\to L_p(X)}
 \ifsodaelse{\\}{}
 \le \frac{e^{ka}}{2\pi} \int_{-\pi}^\pi |e^{ikt}|
 \ifsodaelse{\!}{}
 \cdot
 \ifsodaelse{\!}{}
 \|T_{a+it}\|_{L_p(X)\to L_p(X)}
dt
\le Me^{ka} .  \qedhere \end{equation*}
\end{proof}

\begin{corollary}\label{coro:far away}
Assume that $\Re z\ge 2a$. Then
 \begin{multline*}
 \|T_z\|_{L_p^{\ge m}(X)\to L_p^{\ge m}(X)}
 \ifsodaelse{\\}{}
 =
 \left\|\sum_{k\ge m} e^{-zk} R_k \right\|_{L_p^{\ge m}(X)\to L_p^{\ge m}(X)} \\
 \le
 \sum_{k\ge m} e^{-2ak}\cdot Me^{ak} = \frac{M}{1-e^{-a}} e^{-ma}. \qquad \qed
 \end{multline*}
 \end{corollary}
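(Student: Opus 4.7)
The plan is to reduce the statement to a geometric series after invoking the exponential operator-norm bound $\|R_k\|_{L_p(X)\to L_p(X)} \le Me^{ak}$ established in the previous corollary. The starting observation is that on the tail subspace $L_p^{\ge m}(X)$ every function $f$ satisfies $\widehat{f}(A)=0$ for $|A|<m$, so $R_k f = 0$ whenever $k<m$. Consequently, the spectral expansion $T_z = \sum_{k=0}^n e^{-zk} R_k$ collapses on $L_p^{\ge m}(X)$ to $T_z = \sum_{k\ge m} e^{-zk} R_k$, which is the first displayed equality in the statement. Note also that each $R_k$ preserves Fourier support and hence maps $L_p^{\ge m}(X)$ into itself, so the restriction genuinely makes sense as an endomorphism of the tail space.

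For the quantitative bound I would apply the triangle inequality in operator norm term by term. The hypothesis $\Re z\ge 2a$ gives $|e^{-zk}|=e^{-(\Re z)k}\le e^{-2ak}$, and combining this with the previous corollary's estimate yields
\[
\|T_z\|_{L_p^{\ge m}(X)\to L_p^{\ge m}(X)} \le \sum_{k\ge m} e^{-2ak}\cdot Me^{ak} = M\sum_{k\ge m} e^{-ak} = \frac{Me^{-ma}}{1-e^{-a}},
\]
which is exactly the claimed inequality. The margin $\Re z \ge 2a$ (rather than $\Re z \ge a$) is precisely what is needed to beat the exponential growth $e^{ak}$ in $\|R_k\|$ and leave an exponentially decaying geometric series whose ratio is $e^{-a}<1$.

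There is no serious obstacle here: the content of the corollary is essentially bookkeeping on top of the previous corollary, which in turn carried the real weight via Pisier's holomorphic extension theorem (Theorem~\ref{thm:Pis K-conv}). The only minor care required is checking that each $R_k$ acts boundedly between the tail subspaces with the same norm as on the ambient $L_p(X)$, which is immediate because $R_k$ is a spectral projector onto a subspace of the tail space. The purpose of packaging the estimate in this form is clear from the context: it will be integrated in $t\in[2a,\infty)$ when inverting the Laplacian via $\Delta^{-1}=\int_0^\infty T_t\,dt$ inside the proof of Lemma~\ref{lem:bounding-beckner}.
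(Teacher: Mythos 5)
Your argument is correct and coincides with the paper's own proof, which is exactly the displayed chain in the corollary: restrict $T_z=\sum_k e^{-zk}R_k$ to the tail space where the terms with $k<m$ vanish, apply the triangle inequality with $|e^{-zk}|\le e^{-2ak}$ and the bound $\|R_k\|\le Me^{ak}$ from the preceding corollary, and sum the geometric series. No further comment is needed.
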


The ensuing argument is a quantitative version of the proof of the
main theorem of Pisier in~\cite{pisier-2007}. Let $r=2\sqrt{a^2+\pi^2}$, and define $
V=\left\{z\in \mathbb C:\ |z|\le r\ \wedge\ |\arg z|\le \phi\right\}$ (the set $V\subseteq \mathbb C$ is depicted in Figure~\ref{fig:triangle}). Denote
 $V_0=\{ x\pm i x\tan \phi:\; x\in[0,2a) \}$ and  $V_1=\{re^{i\theta}:\; |\theta|\le \phi\}$, so that we have the disjoint union $\partial V=V_0\cup V_1$.

 \begin{figure}[ht]
 \begin{center}
 \ifsodaelse{
   \includegraphics[scale=0.8]{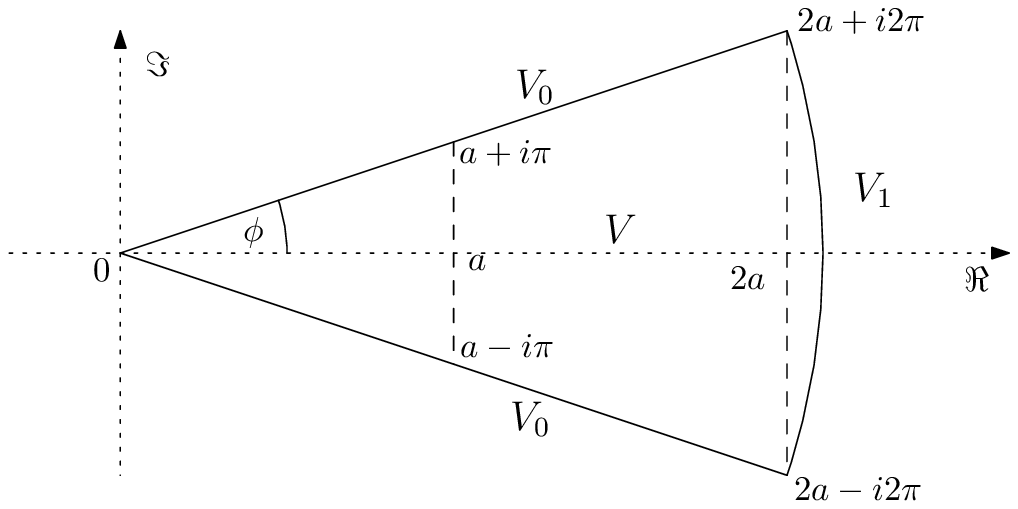}
 }
 {
   \includegraphics{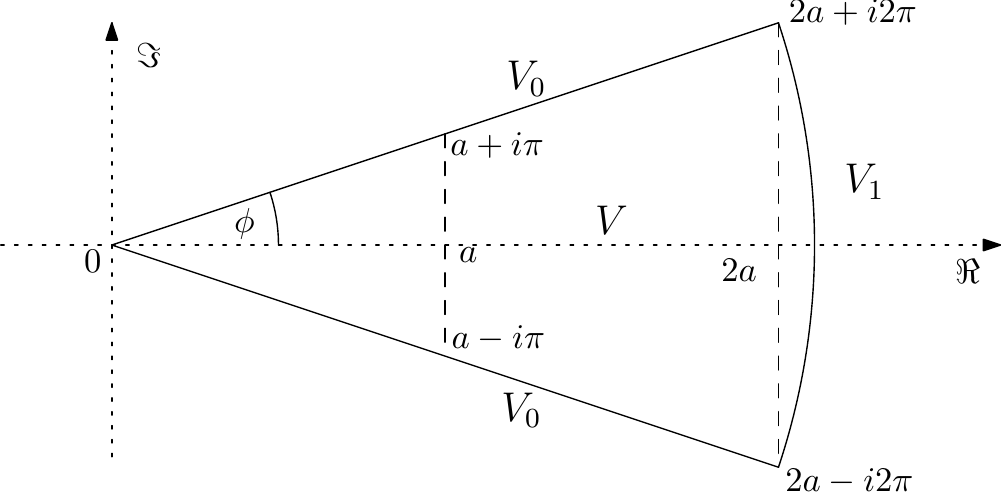}
 } \end{center}
 \caption{The sector $V$.}
 \label{fig:triangle}
 \end{figure}

Fix $t\in(0,2a)$. We shall use the Harmonic measure corresponding to
$V$, i.e., the Borel probability measure $\mu=\mu_t$ on $\partial V$
such that for every bounded analytic function on $V$,
\begin{equation}\label{eq:harmonic}
f(t) =\int_{\partial V} f(z) d\mu(z) .\end{equation}
We refer to~\cite{GM08} for more information on this topic and the
ensuing discussion. It suffices to say here that for a Borel set
$U\subseteq
\partial V$, $\mu_t(U)$ is the probability that Brownian motion
starting at $t$ leaves $V$ at $U$. Equivalently, by conformal
invariance, $\mu_t$ is the push-forward of the normalized Lebesgue
measure on the unit circle $S^1$ under the Riemann mapping from the
unit disk to $V$ which takes the origin to $t$.

Let $\theta=\mu(V_1)$, and denote $\mu=(1-\theta)\mu_0+\theta
\mu_1$, where $\mu_i$ is a probability measure on $V_i$,
$i\in\{0,1\}$. We will require the following standard bound on $\theta$:
\begin{lemma}\label{lem:theta bound} We have
$\theta\asymp
\left(\frac{t}{r}\right)^{\frac{\pi}{2\phi}}$.
\end{lemma}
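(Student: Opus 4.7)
My plan is to compute $\theta=\mu_t(V_1)$ by reducing to a standard Poisson integral on a disk, via a conformal change of variables followed by a Schwarz reflection. First, the map $\Phi(z)=z^{\pi/(2\phi)}$ is a conformal homeomorphism of $V$ onto the right half-disk $D=\{w\in\mathbb C:\ |w|<R,\ \Re w>0\}$, where $R:=r^{\pi/(2\phi)}$; it carries $V_0$ onto the imaginary diameter of $D$, sends $V_1$ onto the right semicircular arc $\Gamma:=\{Re^{i\beta}:\ |\beta|<\pi/2\}$, and maps $t$ to $s:=t^{\pi/(2\phi)}$. Conformal invariance of harmonic measure gives $\theta=\mu^D_s(\Gamma)$.

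Second, the function $w\mapsto\mu^D_w(\Gamma)$ is harmonic in $D$, vanishes on the imaginary diameter, and equals $1$ on $\Gamma$. Odd reflection across the imaginary axis extends it to a bounded harmonic function $\widetilde h$ on the full disk $\{|w|<R\}$ whose boundary values on $\{|w|=R\}$ are $+1$ on the right semicircle and $-1$ on the left. Plugging $w=s$ (real) into the Poisson formula for the disk and exploiting the symmetry $\alpha\mapsto-\alpha$, the computation collapses to
\[
\widetilde h(s)\ =\ \frac{R^2-s^2}{\pi}\int_0^{\pi/2}\!\!\left(\frac{1}{R^2-2Rs\cos\alpha+s^2}-\frac{1}{R^2+2Rs\cos\alpha+s^2}\right)\!d\alpha,
\]
which, after the substitution $u=\sin\alpha$, is an elementary $\arctan$ integral. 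The outcome is the closed form
\[
\mu^D_s(\Gamma)\ =\ \frac{2}{\pi}\,\arctan\!\left(\frac{2Rs}{R^2-s^2}\right).
\]

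Finally, since $t<2a$ and $r=2\sqrt{a^2+\pi^2}$, the ratio $t/r$ is bounded away from $1$ by a constant depending only on $a$; hence $s/R=(t/r)^{\pi/(2\phi)}$ is also bounded away from $1$ by a constant depending only on $a$ and $\phi$. On such a range we have $\tfrac{2Rs}{R^2-s^2}=\tfrac{2(s/R)}{1-(s/R)^2}\asymp s/R$ and $\arctan u\asymp u$ for bounded $u\ge 0$, so
\[
\theta\ \asymp\ \frac{s}{R}\ =\ \left(\frac{t}{r}\right)^{\pi/(2\phi)},
\]
as claimed. The only computational step is the Poisson integral on the reflected disk; the conformal setup is forced by the opening angle $2\phi$ at the vertex of $V$, and I do not anticipate a genuine obstacle beyond bookkeeping the elementary integration.
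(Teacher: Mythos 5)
Your proof is correct, and it diverges from the paper's after the first step. Both arguments open with the same power map $z\mapsto z^{\pi/(2\phi)}$ dictated by the opening angle of the sector (your identification of the images of $V_0$ and $V_1$ is right: with $a=\pi/\tan\phi$ the endpoints $2a\pm 2ia\tan\phi$ of $V_0$ have modulus $2a/\cos\phi=2\pi/\sin\phi=r$, so $V_0$ is exactly the pair of straight edges and $V_1$ the arc, and these map to the diameter and the semicircle of the half-disk). From there the paper keeps composing conformal maps --- half-disk to quadrant, quadrant to upper half-plane, half-plane to $\mathbb D$, then a M\"obius automorphism sending $t$ to $0$ --- and reads off $\theta$ as the normalized length of the image arc $G(V_1)$, leaving that final evaluation as a ``direct computation.'' You instead stop at the half-disk, use odd (Schwarz) reflection across the diameter to pass to the full disk, and evaluate the Poisson integral explicitly; the closed form $\mu^D_s(\Gamma)=\tfrac{2}{\pi}\arctan\bigl(\tfrac{2Rs}{R^2-s^2}\bigr)$ (equivalently $\tfrac{4}{\pi}\arctan(s/R)$) checks out, and your observation that $t/r\le a/\sqrt{a^2+\pi^2}$ keeps $s/R$ bounded away from $1$ so that $\arctan$ is comparable to its argument. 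The trade-off is that the paper's route is uniform machinery (compose standard equivalences, measure an arc) with the actual estimate suppressed, whereas yours carries one extra idea (the reflection) but produces an exact formula from which the two-sided bound is immediate; your version is the more self-contained of the two.
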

\begin{proof}
The proof is a simple exercise in conformal invariance. Let $\mathbb D$ denote the unit disk centered at the origin, and let $\mathbb D_+$ denote the intersection of $\mathbb D$ with the half plane $\arg z\ge 0$. The mapping $z\mapsto  \left(\frac{z}{r}\right)^{\frac{\pi}{2\phi}}$ is a conformal equivalence between $V$ and $\mathbb D_+$. The mapping $z\mapsto -i\frac{z+i}{z-i}$ is a conformal equivalence between $\mathbb D_+$ and the positive quadrant $\{x+iy:\ x,y\ge 0\}$, and the mapping $z\mapsto z^2$ is a conformal equivalence between the positive quadrant and the upper half-plane. Finally, the mapping $z\mapsto \frac{z-i}{z+i}$ is a conformal equivalence between the upper half-plane and $\mathbb D$. By composing these mappings, we obtain the following conformal equivalence between $V$ and $\mathbb D$:
$$
F(z)=\frac{-\left(\left(\frac{z}{r}\right)^{\frac{\pi}{2\phi}}+i\right)^2
-i\left(\left(\frac{z}{r}\right)^{\frac{\pi}{2\phi}}-i\right)^2}
{-\left(\left(\frac{z}{r}\right)^{\frac{\pi}{2\phi}}+i\right)^2+i
\left(\left(\frac{z}{r}\right)^{\frac{\pi}{2\phi}}-i\right)^2}.
$$
Thus, by composing with the conformal automorphism of $\mathbb D$, $z\mapsto \frac{z-F(t)}{1-\overline{F(t)}z}$, we obtain the mapping $G(z)= \frac{F(z)-F(t)}{1-\overline{F(t)}F(z)}$, which is a conformal equivalence between $V$ and $\mathbb D$ with $G(t)=0$.

By conformal invariance, $\theta$ is the length of the arc $G(V_1)\subseteq \partial \mathbb D=S^1$, divided by $2\pi$. The assertion of Lemma~\ref{lem:theta bound} now follows from a direct computation.
\end{proof}

\begin{lemma}\label{lem:strip}
For every $\e\in (0,1)$ there exists a bounded analytic function  $\Psi:V\to \mathbb C$, such that
$\Psi(t)=1$, for every $z\in V_0$, $|\Psi(z)|\le \e$, and for every $z\in V_1$, $|\Psi(z)|\le \e^{-\theta^{-1}+1}$.
\end{lemma}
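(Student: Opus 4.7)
\medskip

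\noindent\textbf{Proof proposal.} The plan is a direct application of the two-constants theorem: I will construct $\Psi$ so that $\log|\Psi|$ is the bounded harmonic function on $V$ with the two prescribed constant boundary values, and then use the fact that the harmonic measure of $V_1$ (viewed from $t$) is exactly $\theta$ to force $\log|\Psi(t)|=0$.

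First I would define constants
\[
c_0=\log\e,\qquad c_1=\frac{1-\theta}{\theta}\log(1/\e)=(\theta^{-1}-1)\log(1/\e),
\]
so that by construction $(1-\theta)c_0+\theta c_1=0$. Let $h\colon V\to\R$ be the unique bounded harmonic function on the (simply connected) sector $V$ whose boundary values are $c_0$ a.e.\ on $V_0$ and $c_1$ a.e.\ on $V_1$; existence and boundedness follow from solving the Dirichlet problem on $V$ (the boundary data is bounded and measurable, with only the two corner points as discontinuities). By the very definition of harmonic measure in~\eqref{eq:harmonic},
\[
h(t)=\int_{\partial V} h\,d\mu = (1-\theta)c_0+\theta c_1 = 0.
\]

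Next I would pass from $h$ to the analytic $\Psi$. Since $V$ is simply connected, $h$ has a (bounded-oscillation) harmonic conjugate $\tilde h$ on $V$, and I set
\[
\Psi(z)=\exp\!\Big(h(z)+i\tilde h(z)-i\tilde h(t)\Big).
\]
Then $\Psi$ is analytic on $V$, $|\Psi|=e^{h}$, and $\Psi(t)=e^{h(t)}=1$. Boundedness of $\Psi$ on $V$ follows from boundedness of $h$: by the maximum principle applied to $h$ on $V$ we have $c_0\le h\le c_1$ everywhere (since $c_0<0<c_1$ given $\e<1$ and $\theta<1$), so $|\Psi(z)|\le e^{c_1}=\e^{-\theta^{-1}+1}$ throughout $V$. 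In particular on $V_0$ one obtains $|\Psi|=e^{c_0}=\e$ and on $V_1$ one obtains $|\Psi|=e^{c_1}=\e^{-\theta^{-1}+1}$, by taking boundary limits (these hold at every boundary point away from the two corner points, and by upper semicontinuity of $\log|\Psi|$ these inequalities extend to the full sets $V_0$ and $V_1$ as required).

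The only subtlety I anticipate is the behavior at the two corners where $V_0$ meets $V_1$: the boundary data of $h$ jumps there, so $h$ is only defined up to a discontinuity at those two points, and one must verify that $|\Psi|$ nevertheless satisfies the stated pointwise bounds on $V_0$ and $V_1$ rather than merely a.e. This is handled by a standard argument: near each corner, $h$ is bounded between $c_0$ and $c_1$, so $|\Psi|$ remains bounded by $e^{c_1}=\e^{-\theta^{-1}+1}$ (which is exactly the bound required on $V_1$), while on the interior of $V_0$ the non-tangential boundary value of $h$ equals $c_0$ so $|\Psi|\le e^{c_0}=\e$. Beyond this minor boundary-behavior check the proof is a one-line application of the two-constants/harmonic measure principle, with no essential obstacle.
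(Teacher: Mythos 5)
Your proof is correct and is in substance the same as the paper's: both are the two-constants theorem, with the decisive input being that the harmonic measure of $V_1$ seen from $t$ equals $\theta$, so that the bounded harmonic function with values $\log\e$ on $V_0$ and $(\theta^{-1}-1)\log(1/\e)$ on $V_1$ vanishes at $t$. The paper merely packages this differently, producing that harmonic function as $1-\theta^{-1}\Re h(z)$ for a conformal equivalence $h$ of $V$ with the strip $\{0\le \Re z\le 1\}$ sending $t\mapsto\theta$ (so analyticity of $\Psi=\e^{1-\theta^{-1}h}$ is automatic), whereas you solve the Dirichlet problem and adjoin a harmonic conjugate; your handling of the two corner points, which lie in $V_1$ where only the weaker bound $\e^{-\theta^{-1}+1}$ is needed, is also fine.
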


\begin{proof} The proof is the same as the proof of Claim 2 in~\cite{pisier-2007}. Consider the strip $S=\{z\in \mathbb C:\ \Re(z)\in [0,1]\}$ and for $j\in \{0,1\}$ let $S_j=\{z\in \mathbb C:\ \Re(z)=j\}$. The argument in Claim 1 in~\cite{pisier-2007} shows that there exists a a conformal equivalence $h:V:\to S$ such that $h(t)=\theta$, $h(V_0)=S_0$, $h(V_1)=S_1$. Now define $\Psi(z)=\e^{1-\theta^{-1}h(z)}$.
\end{proof}

We next calculate
\begin{equation*}
T_t =\Psi(t)T_t= \int_{\partial V} \Psi(z) T_z d \mu(z)
= (1-\theta) \int_{V_0}\Psi(z) T_zd\mu_0(z) +
\theta \int_{V_1} \Psi(z) T_z d\mu_1(z).
\end{equation*}
Hence, using Lemma~\ref{lem:strip} with $\e=e^{-am\theta}$, we get the bound:
\begin{equation}\label{eq:small t}
\|T_t\|_{L_p^{\ge m}(X)\to L_p^{\ge m}(X)}
\ifsodaelse{\\}{}
\le (1-\theta) \e M+ \theta \e^{1-\theta^{-1}} C e^{-am}\le (M+C)e^{-am\theta} .
\end{equation}

Since by Lemma~\ref{lem:theta bound} we have $\theta\asymp_{\phi} t^{\frac{\pi}{2\phi}}$ (where the implied constant depends on $\phi$), the bound~\eqref{eq:small t}, which holds for $t\in (0,2a)$, combined with Corollary~\ref{coro:far away}, concludes the proof of Lemma~\ref{lem:bounding-beckner}.\hfill \qed

\bibliographystyle{abbrvurl}
\bibliography{zigzag}

 \end{document}

By a standard computation, $\theta\asymp
t^{\frac{\pi}{2\phi}}$, where the implied constant depends only on
$a$ and $\phi$. The easiest way to see this fact to replace the
triangle $V$ by the sector $V'$ of angle $2\phi$, center at the
origin, and radius $2\sqrt{a^2+\pi^2}$ (equivalently, replace the
vertical segment $V_1$ by a circular arc centered at the origin).
One can easily verify that $\theta$ is (up to constant factors) the
harmonic measure of the arc in $\partial V'$. If we then apply the
mapping $z\mapsto z^{\frac{\pi}{2\phi}}$, then by conformal
invariance we reduce the problem to computing the probability that
Brownian motion starting at $t^{\frac{\pi}{2\phi}}$ leaves the half
disc with radius
$\left(2\sqrt{a^2+\pi^2}\right)^{\frac{\pi}{2\phi}}$ in the right
half of the plane at the circular arc on its boundary. This
computation is not difficult --- for example one can apply another
holomorphic mapping which sends this half circle to the vertical
strip bounded by the lines $\Re z=0$ and $\Re z =1$, where the half
circle is mapped to $\Re z =1$. For the resulting strip the comp